\newcommand{\norm}[1]{\Vert#1\Vert}
\newcommand{\lnorm}[1]{\left\Vert#1\right\Vert}
\newcommand{\abs}[1]{\left\vert#1\right\vert}
\newcommand{\R}{\mathbb R}
\newcommand{\D}{\partial}
\newcommand{\eps}{\varepsilon}
\newcommand{\dv}{\mathrm{div}\,}
\newcommand{\B}{\mathbf{B}}
\newcommand{\vv}{\mathbf{v}}
\newcommand{\vH}{\mathbf{H}}
\newcommand{\el}{\mathbf{E}}
\newcommand{\n}{\mathbf{n}}
\newcommand{\dm}{{\Omega_t^+}}
\newcommand{\idm}{{\mathscr{D}}}
\newcommand{\nb}{\nabla}
\newcommand{\bnb}{\overline{\nb}}
\newcommand{\tr}{\mathrm{tr}\,}
\newcommand{\curl}{\nb\times }
\newcommand{\dist}{\mathrm{dist}\,}
\newcommand{\ls}{\leqslant\,}
\newcommand{\gs}{\geqslant\,}
\newcommand{\N}{\mathbf{n}}
\newcommand{\vol}{\mathrm{Vol}\,}
\newcommand{\sgn}{\mathrm{sgn}}
\newcommand{\p}{ q }
\newcommand{\no}{\nonumber}
\newcommand{\K}{\mathcal{K}}
\newcommand{\E}{\mathcal{E}}
\newcommand{\T}{\mathcal{T}}
\newcommand{\bic}{\complement}
\newcommand{\jump}[1]{\left\llbracket#1\right\rrbracket}
\newtheorem{theorem}{Theorem}[section]
\newtheorem{lemma}[theorem]{Lemma}
\newtheorem{proposition}[theorem]{Proposition}
\theoremstyle{definition}
\newtheorem{definition}[theorem]{Definition}
\theoremstyle{remark}
\newtheorem{remark}[theorem]{Remark}
\numberwithin{equation}{section}
\begin{document}

\title[Motion of free ideal incompressible MHD]{On the motion of free interface in ideal incompressible MHD}%

\author[C.C.Hao]{Chengchun Hao}%

\address{Institute of Mathematics,
	Academy of Mathematics \& Systems Science,
	Chinese Academy of Sciences,  Beijing 100190, China
}
\email{hcc@amss.ac.cn}
\thanks{ The author would like to thank Professor T. Luo for his helpful discussion. Hao's research was 
	supported in part by the National Natural Science Foundation of China under grants 11171327 and 11671384.}

\maketitle

\begin{abstract}
	For the free boundary problem of the plasma-vacuum interface to three-dimensional ideal incompressible magnetohydrodynamics (MHD), the a priori estimates of smooth solutions are proved in Sobolev norms by adopting a geometrical point of view and some quantities such as the second fundamental form and the velocity of the free interface are estimated. In the vacuum region, the magnetic fields are described by the div-curl system of pre-Maxwell dynamics, while at the interface the total pressure is continuous and the magnetic fields are tangent to the interface, but we do not need any restrictions on the size of the magnetic fields on the free interface.  We introduce the ``virtual particle'' endowed with a virtual velocity field in vacuum to reformulate the problem to a fixed boundary problem under the Lagrangian coordinates. The $L^2$-norms of any order covariant derivatives of the magnetic fields both in vacuum and on the boundaries are bounded in terms of initial data and the second fundamental forms of the free interface and the rigid wall. The estimates of the curl of the electric fields in vacuum are also obtained, which are also indispensable in elliptic estimates.
	
\end{abstract}

\section{Introduction}

In the present paper, we are concerned with the free boundary problem of ideal incompressible magnetohydrodynamics (MHD). It consists of finding a bounded variable domain $\dm\subset \R^3$ filled with inviscid incompressible electrically conducting homogeneous plasma (the density is a positive constant), together with the vector field of velocity $\vv(t,x)=(v_1,v_2,v_3)$, the scalar pressure $p(t,x)$ and the magnetic field $\vH(t,x)=(H_1,H_2,H_3)$ satisfying the system of equations of MHD. The boundary $\Gamma_t$ of $\dm$ is the free surface of the plasma. It is assumed that the plasma is surrounded by a vacuum region $\Omega_t^-$ and that the whole domain $\Omega=\dm\cup \Gamma_t\cup \Omega_t^-$ is independent of time and bounded by a fixed perfectly conducting rigid wall $W$ such that $W\cap \Gamma_t=\emptyset$. Both $\dm$ and $\Omega$ are simply connected. The magnetic field should be found not only in $\dm$ but also in $\Omega_t^-$.

In the plasma region $\dm$, the ideal MHD equations apply, i.e., for $t>0$
\begin{subequations}\label{mhd}
	\begin{numcases}{}
	\vv_t+\vv\cdot\nb \vv+\nabla p=\mu\big(\vH\cdot\nb \vH-\frac{1}{2}\nb |\vH|^2\big),   \label{mhd.1}\\
	\vH_t+\vv\cdot \nb \vH=\vH\cdot \nb\vv,  \label{mhd.2}\\
	\dv \vv=0,  \quad \dv \vH=0. \label{mhd.3}
	\end{numcases}
\end{subequations}

Let $\hat{\vH}$ be the magnetic field in the vacuum $\Omega_t^-$. Since the vacuum has no density, velocity, electric current (i.e., $\hat{\el}_t=0$), except the magnetic field, we have the pre-Maxwell equations in vacuum
\begin{align}\label{v}
\nb\times\hat{\vH}=0,\; \dv\hat{\vH}=0,\; \hat{\vH}_t=-\nb\times\hat{\el},\; \dv\hat{\el}=0.
\end{align}
At the wall $W$, the tangential component of the electric field and the normal component of the magnetic field must vanish, i.e.,
\begin{align}\label{vb}
\N\times \hat{\el}=0,\quad \N\cdot\hat{\vH}=0, \quad \text{on } W,
\end{align}
where $\N$ is the inward drawn unit normal to the boundary of $\Omega_t^-$.

The plasma-vacuum interface is now free to move since the plasma is surrounded by vacuum. Hence, $\vv\cdot\N|_{\Gamma_t}$ is unknown and arbitrary where $\N$ is the unit normal to $\Gamma_t$ pointing from the plasma to the vacuum. Thus, we need some non-trivial jump conditions that must be satisfied to connect the fields across the interface.  These arise from the divergence $\vH$ equation, Faraday's law and the momentum equation. A convenient way to obtain the desired relations is to assume that the plasma surface $\Gamma_t$ is moving with a normal velocity 
\begin{align}
V_\N\N=(\vv\cdot\N)\N,
\end{align}
where  $V_\N$ is the velocity of evolution of $\Gamma_t$ in the direction $\N$. The jump conditions are straightforward to derive in a reference frame moving with the fluid surface. Once these conditions are obtained, all that is then required is to convert back to the laboratory frame using the corresponding Galilean transformation (cf. \cite{IdealMHD}). From Maxwell's equations, we know that, at the interface $\Gamma_t$, the magnetic field and the electric field must satisfy the conditions
\begin{align}\label{bc2}
\jump{\N\cdot\vH}=0,\text{ and } \jump{\N\times\el}=0,
\end{align}
where $\jump{f}\equiv \hat{f}-f$ is the jump in a quantity across the interface. We assume that the plasma is a perfect conductor, i.e., $\el+\vv\times\vH=0$. This implies that in the plasma $[\N\cdot\vH]_{\Gamma_t}$ and $[\N\times \el-(\N\cdot \vv)\vH]_{\Gamma_t}$ are both automatically zero.  
Therefore, \eqref{bc2} reduces to
\begin{align}\label{bc5}
\N\cdot\vH=\N\cdot\hat{\vH}=0, \quad \N\times \hat{\el}=(\vv\cdot\N)\hat{\vH},\quad \text{on } \Gamma_t.
\end{align}
The first one also means that the magnetic fields are not pointing into the vacuum on the interface.

We also have the following pressure balance condition (also cf. \cite{IdealMHD,SchMHD09}) on the interface $\Gamma_t$:
\begin{align}
&\jump{p+\frac{\mu}{2}|\vH|^2}=0,\quad \text{on } \Gamma_t.\label{bc1}
\end{align}

For convenience, we denote
$$P=q^+-q^-,\; q^+=p+\frac{\mu}{2}|\vH|^2,\;\text{and } q^-=\frac{\mu}{2}|\hat{\vH}|^2.$$
The system can be rewritten as 
\begin{subequations}\label{mhd1}
	\begin{numcases}{}
	\vv_t+\vv\cdot\nb \vv+\nb q^+=\mu\vH\cdot\nb \vH,  \quad\quad\quad\quad\quad\quad\quad\quad\;  \text{in }\dm, \label{mhd1.1}\\
	\vH_t+\vv\cdot \nb \vH=\vH\cdot \nb\vv, \quad\quad\quad\quad\quad\quad\quad\quad\quad\quad\quad\;\;  \text{in }\dm,\label{mhd1.2}\\
	\dv \vv=0,  \quad \dv \vH=0 , \quad\quad\quad\quad\quad\quad\quad\quad\quad\quad\quad\;   \text{in }\dm, \label{mhd1.3}\\
	\nb\times\hat{\vH}=0,\; \dv\hat{\vH}=0, \; \hat{\vH}_t=-\nb\times\hat{\el}, \;\dv\hat{\el}=0,\;\;\;\;  \text{in }\Omega_t^-, \label{magnetic}\\
	P=0, \quad \vH\cdot\N=\hat{\vH}\cdot\n=0,\;\quad\quad\quad\quad\quad\quad\quad\quad\quad\;\; \text{on }\Gamma_t,\label{bc.1}\\
	\N\times\hat{\el}=(\vv\cdot\N) \hat{\vH}, \quad\quad\quad\quad\quad\quad\quad\quad\quad\quad\quad\quad\quad\;\; \text{on }\Gamma_t,\label{bc.3}\\
	\N\times\hat{\el} =0, \quad \hat{\vH}\cdot\N=0, \quad\quad\quad\quad\quad\quad\quad\quad\quad\quad\quad\; \text{on } W,\label{bc.4}\\
	\vv(0,x)=\vv_0(x),\; \vH(0,x)=\vH_0(x),\;\quad\quad\quad\quad\quad\quad\;\;\;\; \text{in }\Omega^+,\label{initialcondition}\\
	\hat{\vH}(0,x)=\hat{\vH}_0(x),\;\hat{\el}(0,x)=\hat{\el}_0(x),\;\quad\quad\quad\quad\quad\quad\;\;\; \text{in }\Omega^-, \label{initialcondition'}\\
	\Omega_t^+|_{t=0}=\Omega^+,\quad \Omega_t^-|_{t=0}=\Omega^-, \quad \Gamma_t|_{t=0}=\Gamma,
	\end{numcases}
\end{subequations}
where $\hat{\el}_0$ satisfies the boundary condition \eqref{bc.3} and \eqref{bc.4}, i.e., $N\times\hat{\el}_0=(\vv_0\cdot N)\hat{\vH}_0$  on $\Gamma$ and $\N\times\hat{\el}_0=0$ on $W$, where $N$ denotes the unit normal to $\Gamma$ pointing from the plasma to the vacuum.

We will prove a priori bounds for the  interface problem \eqref{mhd1} in Sobolev spaces under the following condition
\begin{align}\label{phycond}
\nb_N P\ls -\eps<0 \text{ on } \Gamma,
\end{align}
where $\nb_N=N^i\D_i$ indicates the normal derivative. That is, we  assume that this condition holds initially, and then we can verify that it holds true within a period. For the free boundary problem of motion of incompressible fluids in vacuum, without magnetic fields, the natural physical condition (cf. \cite{bealhou,CL,Coutand,L1,L2,LN,Ebin,SZ,wu1, wu2,zhang}) reads that
$ \nb_{N} p\ls -\eps<0 \text{ on } \Gamma$, which excludes the possibility of the Rayleigh-Taylor type instability (see \cite{Ebin}).

Up to now, there were no well-posedness results for full non-stationary plasma-vacuum models. This is due to the difficulties caused by the strong coupling between the velocity field and the magnetic field. Hao and Luo studied the a priori estimates for the free boundary problem of ideal incompressible MHD flows in \cite{HLarma} with a bounded initial domain homeomorphic to a ball, provided that the size of the magnetic field to be invariant on the free boundary. 
For the special case when the magnetic field is zero on the free boundary, Lee proved the local existence and uniqueness of plasma-vacuum free boundary problem of incompressible viscous-diffusive MHD flow in three-dimensional space with infinite depth setting  in \cite{Lee14a}, and got a local unique solution of free boundary MHD without kinetic viscosity and magnetic diffusivity via zero kinetic viscosity-magnetic diffusivity limit in \cite{Lee14b}. 
For the incompressible viscous MHD equations, a free boundary problem  in a simply connected domain of $\R^3$ was studied by a linearization technique and the construction of a sequence of successive approximations in \cite{PS10} with an irrotational condition for magnetic fields in a part of the domain. 
The well-posedness of the linearized plasma-vacuum interface problem in ideal incompressible MHD was studied in \cite{MTT14} in an unbounded plasma domain, the linearized plasma-vacuum problem in ideal compressible MHD was investigated in \cite{tr1,ST13},  the well-posedness of the original nonlinear free boundary problem was proved in \cite{ST14} by using the Nash-Moser method, and a stationary problem was studied in \cite{FL95}. In \cite{CMST12}, the a priori estimates for smooth solutions of the free boundary problem for current-vortex sheets in ideal incompressible two-fluid MHD  was proved in the domain $\mathbb{T}^2\times(-1,1)$ under some linearized stability conditions on the jump function of the velocity field and the magnetic fields. The existence of current-vortex sheets problem in compressible MHD was studied in \cite{chenwang} and \cite{tr2}. 

Regarding the cases without magnetic fields, the free surface problem of the incompressible Euler equations of fluids has attracted much attention in recent decades and  important progress has been made for flows with or without vorticity or surface tension. We refer readers to \cite{AM,CL,Coutand,GMS12,L1,L2,LN,Ebin,SZ,wu1,wu2,Wu11,zhang} and references therein.

In this paper, we prove a priori bounds for the free interface problem \eqref{mhd1} and take into account the  magnetic field not only in plasma but also in vacuum. We do not need the restricted boundary condition $ |\vH|\equiv \text{const}\gs 0$ on the free interface assumed in \cite{HLarma}. What makes this problem difficult is that the regularity of the boundary enters to the highest order and  energies interchange  between plasma and vacuum. Roughly speaking, the energies of plasma and vacuum will exchange via the pressure balance relation $q^+=q^-$ on the free interface. But we have to investigate the estimates of the magnetic field and the electric field in vacuum although the electric field is only a secondary variable in order to obtain energy estimates. We also introduce the ``virtual particle'' endowed with a virtual velocity field to reformulate the free boundary problem to a fixed boundary problem. We can show that the norms of the magnetic field in vacuum depend only on the norms of initial data and the second fundamental forms of the interface and the wall.

Now, we derive the conserved energy. Let $D_t:=\D_t+v^k\D_k$ be the material derivative, then it holds for any function $F$ on $\overline{\dm}$
\begin{align} \label{dtint}
\frac{d}{dt}\int_{\dm} Fdx=\int_\dm D_tF dx,
\end{align}
since $\dv \vv=0$, and then for any function $F$ on $\overline{\Omega_t^-}$ (we can extend it to $\Omega$ by a smooth cut-off function)
\begin{align}
\frac{d}{dt}\int_{\Omega_t^-} Fdx=&\frac{d}{dt}\int_{\Omega} Fdx-\frac{d}{dt}\int_{\dm} Fdx=\int_{\Omega} \D_tFdx-\int_\dm D_tF dx\no\\
=&\int_{\Omega_t^-}\D_t Fdx-\int_{\dm}\vv\cdot\nb Fdx
=\int_{\Omega_t^-}\D_t Fdx-\int_{\Gamma_t} \vv_\N FdS,\label{dtintv}
\end{align}
where $\N$ is the outward unit normal to $\Gamma_t$ corresponding to $\Omega_t^+$.

Throughout the paper, we use the Einstein summation convention, that is, when an index variable appears twice in both the subscript and the superscript of a single term it indicates summation of that term over all the values of the index.

From \eqref{dtint} and \eqref{dtintv}, we have, by using the boundary conditions \eqref{bc.1}, \eqref{bc.3} and \eqref{bc.4}, that
\begin{align*}
&\frac{d}{dt}\left(\int_\dm \left(\frac{1}{2}|\vv|^2+\frac{\mu}{2}|\vH|^2\right) dx+\frac{\mu}{2}\int_{\Omega_t^-}|\hat{\vH}|^2 dx\right)\\
=&\int_\dm \left(v^i D_t v_i+\mu H^iD_t H_i\right) dx
+\frac{\mu}{2}\int_{\Omega_t^-}\D_t|\hat{\vH}|^2dx-\frac{\mu}{2}\int_{\Gamma_t} \vv_\N|\hat{\vH}|^2 dS\\
=&\int_\dm \left[v^i(-\D_ip+\mu H^k\D_kH_i-\frac{\mu}{2}\D_i|\vH|^2)+\mu H^iH^k\D_kv_i\right]dx\\
&+\mu \int_{\Omega_t^-}\dv(\hat{\vH}\times\hat{\el})dx-\frac{\mu}{2}\int_{\Gamma_t} \vv_\N|\hat{\vH}|^2 dS\\
=&-\int_{\Gamma_t}\left(p+\frac{\mu}{2}|\vH|^2\right)\vv_\N dS +\mu \int_{\Gamma_t} (\vH\cdot\N)(\vv\cdot\B)dS\\
&+\mu \int_{\Gamma_t}\hat{\vH}\cdot(\N\times\hat{\el})dS-\frac{\mu}{2}\int_{\Gamma_t} \vv_\N|\hat{\vH}|^2 dS+\mu \int_{W}\hat{\vH}\cdot(\hat{\el}\times \N)dS\\
=&-\int_{\Gamma_t}\left(p+\frac{\mu}{2}|\vH|^2-\frac{\mu}{2}|\hat{\vH}|^2\right)\vv_\N dS\\
=&0,
\end{align*}
due to
\begin{align*}
\frac{1}{2}\D_t|\hat{\vH}|^2=-\hat{\vH}\cdot (\nb\times\hat{\el})
=\dv(\hat{\vH}\times\hat{\el})-\hat{\el}\cdot(\curl\hat{\vH})=\dv(\hat{\vH}\times\hat{\el}),
\end{align*}
in view of $\curl\hat{\vH}=0$ in $\Omega_t^-$.
Thus, the conserved physical energy can be given by
\begin{align*}
E_0(t):=&\int_\dm \left(\frac{1}{2}|\vv(t)|^2+\frac{\mu}{2}|\vH(t)|^2\right) dx+\int_{\Omega_t^-}\frac{\mu}{2}|\hat{\vH}(t)|^2 dx\equiv E_0(0),
\end{align*}
for any $t>0$.

The higher order energy has an interface boundary part and interior parts in both plasma and vacuum. The boundary part controls the norms of the second fundamental form of the free interface, the interior part in plasma controls the norms of the velocity, magnetic fields and hence the pressure, the interior part in vacuum controls the magnetic field. We will prove that the time derivatives of the energy are controlled by themselves. A crucial point in the construction of the higher order energy norms is that the time derivatives of the interior parts will, after integrating by parts, contribute some boundary terms that cancel
the leading-order  terms in the corresponding time derivatives of the boundary integrals. To this end,  we need to project the equations  for the total pressure $P$ to the tangent space of the boundary.

The orthogonal projection $\Pi$ to the tangent space of the boundary of a $(0, r)$ tensor $\alpha$ is defined to be the projection of each component along the normal:
\begin{align*}
(\Pi \alpha)_{i_1\cdots i_r}=\Pi_{i_1}^{j_1}\cdots \Pi_{i_r}^{j_r} \alpha_{j_1\cdots j_r},  \quad \text{ where } \Pi_i^j=\delta_i^j-{\N}_i{\N}^j, \end{align*}
with ${\N}^j=\delta^{ij} {\N}_i={\N}_j$.

Let $\bar \partial_i=\Pi_i^j \partial_j$ be a tangential derivative. If $q=const$ on $\partial \dm$, it follows that $\bar \partial_i q=0$ there and
\begin{equation*}
(\Pi \partial^2 q)_{ij}=\theta_{ij} \nabla_{\N} q,
\end{equation*}
where ${\theta}_{ij}=\bar \partial_i \N_j$ is the second fundamental form of $\partial \dm$.

The higher order energies are defined as:
For $r\gs 1$
\begin{align*}
E_r(t)=&\int_{\dm}\delta^{ij}Q(\D^r v_i, \D^r v_j)dx+\mu \int_{\dm}\delta^{ij}Q(\D^r H_i, \D^r H_j)dx\no\\
&+\mu \int_{\Omega_t^-} \delta^{ij}Q(\D^r\hat{H}_i,\D^r\hat{H}_j)dx+\sgn(r-1)\int_{\Gamma_t}Q(\D^r P, \D^r P)\vartheta dS\no\\
&+\int_{\dm}\left(|\D^{r-1}\curl v|^2+\mu |\D^{r-1}\curl B|^2\right)dx,
\end{align*}
where  $\sgn(s)$ be the sign function of the real number $s$ (so we do not need the boundary integral for $r=1$) and $\vartheta=(-\nabla_{\N} P)^{-1}$.
Here  $Q$ is a positive definite quadratic form which,
when restricted to the boundary, is the inner product of the tangential components, i.e., $Q(\alpha, \beta)=\langle\Pi \alpha, \Pi \beta\rangle$,  and in the interior $Q(\alpha, \alpha)$ increases to the norm $|\alpha|^2$. To be more specific, let
\begin{equation*}
Q(\alpha, \beta)=q^{i_1j_1}\cdots q^{i_rj_r}\alpha_{i_1\cdots i_r}\beta_{j_1\cdots j_r}
\end{equation*}
where
\begin{equation*}
q^{ij}=\delta^{ij}-\eta(d)^2\N^i\N^j, \quad d(x)=\dist(x, \Gamma_t), \quad \N^i=-\delta^{ij}\D_j d. 
\end{equation*}
Here  $\eta$ is a smooth cutoff function satisfying $0\ls \eta(d)\ls 1, \quad \eta(d)= 1$ when
$d < d_0/4$ and $\eta(d)=0$ when $d > d_0/2. $ $d_0$  is a fixed number that is smaller than
the injectivity radius of the normal exponential map $\iota_0$, defined to be the largest
number $\iota_0$ such that the map
\begin{equation}\label{defn.3.4}
\Gamma_t\times (-\iota_0, \iota_0)\to \{x\in \R^3:  \dist(x,\  \Gamma_t) < \iota_0\},
\end{equation}
given by
$$(\bar x, \iota)\to x=\bar x+\iota \N(\bar x),$$
is an injection. We can also define $\iota_0$ for $W$ similarly which is independent of $t$ and denote it by $\iota_0'$. We always assume the second fundamental form of $W$ and $1/\iota_0'$ to be bounded, since both of them are invariant with respect to time.

Now, we can state the main theorem as follows.
\begin{theorem}\label{main thm}
	Let \begin{align}
	\K(0)=&\max\left(\norm{\theta(0,\cdot)}_{L^\infty(\Gamma)}, 1/\iota_0(0)\right),\label{K'}\\
	\E(0)=&\norm{1/(\nb_N P(0,\cdot))}_{L^\infty(\Gamma)}=1/\eps>0. \label{E'}
	\end{align}
	Assume $\norm{\theta}_{L^\infty(W)}+1/\iota_0'\ls C\K(0)$ and $\nb\times \hat{\el}_0\in L^2(\Omega^-)$. Then there exists a continuous function $\T>0$ such that if
	\begin{align}
	T\ls \T(\K(0),\E(0),E_0(0),\cdots, E_{4}(0),\vol\Omega ),
	\end{align}
	then any smooth solution of the free boundary problem for MHD equations \eqref{mhd1} with \eqref{phycond} in $[0,T]$ satisfies the estimate
	\begin{align}
	\sum_{s=0}^{4} E_s(t)\ls 2\sum_{s=0}^{4} E_s(0), \quad 0\ls t\ls T.
	\end{align}
\end{theorem}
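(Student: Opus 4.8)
The plan is to establish the estimate by a continuity (bootstrap) argument resting on a differential inequality for the total energy $\sum_{s=0}^4 E_s(t)$ coupled to evolution estimates for $\K$ and $\E$. On a maximal subinterval of $[0,T]$ I would assume the a priori bounds $\K(t)\ls 2\K(0)$, $\E(t)\ls 2\E(0)$ and $\sum_{s=0}^4 E_s(t)\ls 2\sum_{s=0}^4 E_s(0)$. These guarantee that the Rayleigh--Taylor sign condition \eqref{phycond} persists, so the weight $\vartheta=(-\nb_N P)^{-1}$ stays positive and bounded; this makes the boundary integral $\int_{\Gamma_t}Q(\D^r P,\D^r P)\vartheta\,dS$ coercive and, through the identity $(\Pi\D^2 P)_{ij}=\theta_{ij}\nb_N P$, lets it control $\norm{\theta}_{L^\infty(\Gamma_t)}$. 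I would carry out all computations in the Lagrangian frame furnished by the ``virtual particle'' flow, which fixes both $\dm$ and $\Omega_t^-$ and turns $D_t=\D_t+v^k\D_k$ into a pure time derivative, using \eqref{dtint} in the plasma and \eqref{dtintv} in the vacuum to account for the moving-interface contributions.

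The core computation is the time differentiation of the interior integrals. Commuting $\D^r$ through the momentum equation \eqref{mhd1.1} and the induction equation \eqref{mhd1.2} yields
\begin{align*}
D_t\D^r v_i=-\D^r\D_i q^+ +\mu\,\D^r(H^k\D_k H_i)+\mathcal{R}_i,\qquad
D_t\D^r H_i=\D^r(H^k\D_k v_i)+\mathcal{S}_i,
\end{align*}
where $\mathcal{R}_i,\mathcal{S}_i$ gather commutators that are of lower order and dominated by the energies. Inserting these into $\tfrac{d}{dt}\int_{\dm}\delta^{ij}Q(\D^r v_i,\D^r v_j)\,dx$ and $\mu\tfrac{d}{dt}\int_{\dm}\delta^{ij}Q(\D^r H_i,\D^r H_j)\,dx$ and integrating by parts, the pressure-gradient term produces the dangerous boundary integrals of the schematic form $\int_{\Gamma_t}(\N_i\,\D^r v^i)\,\D^r q^+\,dS$ together with analogous magnetic contributions, while the interior remainders reduce to lower order by $\dv\vv=\dv\vH=0$. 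The quadratic form $Q$ is tailored so that on $\Gamma_t$ only the tangential components survive, whereas in the interior $Q(\alpha,\alpha)$ increases to $|\alpha|^2$.

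The crux is the exact cancellation of these top-order boundary terms, and this is the step I expect to be the \emph{main obstacle}, since it is where the boundary regularity enters at the highest order and where the plasma and vacuum energies interchange. Differentiating $\int_{\Gamma_t}Q(\D^r P,\D^r P)\vartheta\,dS$ produces a leading term containing $D_t\D^r P$ on $\Gamma_t$; projecting the equation for $P$ onto the tangent space and invoking $(\Pi\D^2 P)_{ij}=\theta_{ij}\nb_N P$ together with the kinematic relation $V_\N=\vv\cdot\N$ shows that this leading term matches, modulo energy-controlled remainders, the boundary integral generated by the interior integration by parts. Simultaneously, the pressure-balance condition $q^+=q^-$ on $\Gamma_t$ couples the plasma total pressure to the vacuum magnetic pressure $\tfrac{\mu}{2}|\hat{\vH}|^2$, so that the boundary contributions arising from $\mu\tfrac{d}{dt}\int_{\Omega_t^-}\delta^{ij}Q(\D^r\hat H_i,\D^r\hat H_j)\,dx$---after using \eqref{dtintv}, the relation $\hat{\vH}_t=-\curl\hat{\el}$, and the interface condition $\N\times\hat{\el}=(\vv\cdot\N)\hat{\vH}$---feed into the same cancellation. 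Making this exact requires tracking every top-order boundary term from all three interior integrals against the single boundary integral in $E_r$.

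Finally I would close the remaining pieces. The curl integrals $\int_{\dm}(|\D^{r-1}\curl v|^2+\mu|\D^{r-1}\curl B|^2)\,dx$ are controlled through the transport equations for the vorticity and the current derived from \eqref{mhd1.1}--\eqref{mhd1.2}, whose time derivatives are bounded by the energies. The vacuum magnetic field is handled by elliptic div--curl estimates: since $\curl\hat{\vH}=0$ and $\dv\hat{\vH}=0$ with $\hat{\vH}\cdot\N=0$ on $\Gamma_t\cup W$, every $\D^r\hat{\vH}$ is bounded in $L^2$ in terms of the initial data and the second fundamental forms of $\Gamma_t$ and $W$, while the hypothesis $\curl\hat{\el}_0\in L^2(\Omega^-)$ propagates to control $\curl\hat{\el}$. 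The total pressure is recovered from the elliptic problem $-\Delta q^+=(\D_i v^k)(\D_k v^i)-\mu(\D_i H^k)(\D_k H^i)$ in $\dm$ with boundary data $q^+=q^-$, bounding $q^+$ by $\vv$, $\vH$ and the geometry (four derivatives suffice to close all $L^\infty$ norms through Sobolev embedding in $\R^3$). Assembling all contributions gives a coupled system of differential inequalities for $\sum_{s=0}^4 E_s$ together with $\K$ and $\E$, of the form $\tfrac{d}{dt}\sum_{s=0}^4 E_s\ls C\big(1+\sum_{s=0}^4 E_s\big)$ with $C=C(\K,\E,E_0,\dots,E_4,\vol\Omega)$; choosing $T=\T$ small enough that the bootstrap bounds are recovered strictly then yields $\sum_{s=0}^4 E_s(t)\ls 2\sum_{s=0}^4 E_s(0)$ on $[0,T]$.
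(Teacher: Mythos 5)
Your plan follows the paper's proof essentially step for step: the virtual-particle Lagrangian reformulation, the $r$-th order energy identity with the top-order boundary cancellation against $\int_\Gamma Q(\nb^r P,\nb^r P)\vartheta\,dS$, the static div--curl bounds for $\hat{\vH}$ in vacuum, the propagation of $\curl\hat{\el}$, the elliptic recovery of $q^+$, and the closing bootstrap in $\K$, $\E$ and the energies, so the approach is the same. The one place the paper proceeds differently is that it never time-differentiates the vacuum interior term at higher order --- all derivatives of $\hat{\vH}$ are bounded once and for all by $E_0(0)$ and the second fundamental forms (Proposition \ref{prop.v.r}), and the vacuum enters the $r$-th order identity only through $\Pi\nb^r|\hat{\vH}|^2$ and $D_tq^-=-\mu\,\hat{\vH}\cdot(\curl\hat{\el})+\cdots$ on $\Gamma$ --- which sidesteps the delicate step your proposed cancellation would require, namely applying $\nb^r$ to the interface condition $\N\times\hat{\el}=(\vv\cdot\N)\hat{\vH}$ and thereby generating $r$ derivatives of $\vv$ on $\Gamma$ at top order.
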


The rest of this paper is organized as follows: In Section \ref{sec.2}, we use the Lagrangian coordinates to transform the free interface problem to a fixed initial boundary problem. Sections \ref{sec.3} and \ref{sec.4} are devoted to the estimates of the magnetic field and the electric field in vacuum, respectively. In Section \ref{sec.5}, we prove the higher order energy estimates. In the derivation of the higher order energy estimates in Section \ref{sec.5}, some a priori assumptions are made, which will be justified in section \ref{sec.6}. In order to make this paper more readable, we give an appendix on some estimates used in the previous sections, which are basically proved in \cite{CL}.

\section{Reformulation in Lagrangian Coordinates}\label{sec.2}

We may think that the velocity field of the ``virtual particles'' in vacuum  is $\vv$ on the boundary. Then, we can extend the velocity from the boundary to the interior of vacuum by a cut-off function such that
\begin{align*}
\vv(t,x)=\left\{
\begin{array}{ll}
\vv(t,\bar{x}), &\text{ near } \Gamma_t,\\
\text{smooth},& \text{ otherwise},\\
0,& \text{ near } W,
\end{array}
\right.
\end{align*}
and $\dv \vv=0$ for $x\in \Omega_t^-$ as long as $\Gamma_t\cap W=\emptyset$ in $[0,T]$, where $\bar{x}\in\Gamma_t$ satisfies $\dist(x,\Gamma_t)=|x-\bar{x}|$, and $|\vv(t,x)|\ls \norm{\vv(t)}_{L^\infty(\Gamma_t)}$ for $x\in \Omega^-_t$ by construction.

Assume that we are given a velocity vector field $\vv(t,x)$ defined in a set $\idm\subset [0,T]\times\R^n$ such that the interface of $\dm=\{x: (t,x)\in\idm\}$ and $\Omega_t^-$ moves with the velocity, i.e., $(1,\vv)\in T(\D\idm)$ which denotes the tangent space of $\D\idm$. We will now introduce Lagrangian or co-moving coordinates, that is, coordinates that are constant along the integral curves of the velocity vector field so that the boundary becomes fixed in these coordinates (cf. \cite{CL}). Let $x=x(t,y)=f_t(y)$ be the trajectory of the particles given by
\begin{align}\label{trajectory}
\left\{\begin{aligned}
&\frac{dx}{dt}=\vv(t,x(t,y)), \quad (t,y)\in [0,T]\times \Omega,\\
&x(0,y)=f_0(y), \quad y\in\Omega.
\end{aligned}\right.
\end{align}
where, when $t=0$, we can start with either the Euclidean coordinates in $\Omega$ or some other coordinates $f_0:\Omega\to \Omega$ where $f_0$ is a diffeomorphism in which the domain $\Omega$ becomes simple. For simplicity, we will assume $f_0(y)=y$ in this paper. Then, the Jacobian determinant $\det(\D y/\D x)\equiv 1$ due to the divergence-free property of $\vv$.  For each $t$, we will then have a change of coordinates $f_t:\Omega\to \Omega$, taking $y\to x(t,y)$.  The Euclidean metric $\delta_{ij}$ in $\Omega$ then induces a metric
\begin{align}\label{metric1}
g_{ab}(t,y)=\delta_{ij}\frac{\D x^i}{\D y^a}\frac{\D x^j}{\D y^b}
\end{align}
in $\Omega$ for each fixed $t$.

We use the covariant differentiation in $\Omega$ with respect to the metric $g_{ab}(t,y)$, because it corresponds to differentiation in $\Omega$ under the change of coordinates $\Omega \ni y\to x(t,y)\in\Omega$, and we will work in both coordinate systems. This also avoids possible singularities in the change of coordinates. We denote covariant differentiation in the $y_a$-coordinate by $\nb_a$, $a=0, \cdots, 3$, and differentiation in the $x_i$-coordinate by $\D_i$, $i=1,2,3$. The covariant differentiation of a $(0,r)$ tensor $k(t,y)$ is the $(0,r+1)$ tensor given by
\begin{align*}
\nb_a k_{a_1\cdots a_r}=\frac{\D k_{a_1\cdots a_r}}{\D y^a}-\Gamma_{aa_1}^d k_{d\cdots a_r}-\cdots-\Gamma_{aa_r}^dk_{a_1\cdots d},
\end{align*}
where the Christoffel symbols $\Gamma_{ab}^c$ are given by
\begin{align*}
\Gamma_{ab}^c=\frac{g^{cd}}{2}\left(\frac{\D g_{bd}}{\D y^a}+\frac{\D g_{ad}}{\D y^b}-\frac{\D g_{ab}}{\D y^d}\right)=\frac{\D y^c}{\D x^i}\frac{\D^2 x^i}{\D y^a \D y^b},
\end{align*}
where $g^{cd}$ is the inverse of $g_{ab}$. If $w(t,x)$ is the $(0,r)$ tensor expressed in the $x$-coordinates, then the same tensor $k(t,y)$ expressed in the $y$-coordinates is given by
\begin{align*}
k_{a_1\cdots a_r}(t,y)=\frac{\D x^{i_1}}{\D y^{a_1}}\cdots \frac{\D x^{i_r}}{\D y^{a_r}}w_{i_1\cdots i_r}(t,x), \quad x=x(t,y),
\end{align*}
and by the transformation properties for tensors,
\begin{align}\label{covtensor}
\nb_a k_{a_1\cdots a_r}=\frac{\D x^i}{\D y^a}\frac{\D x^{i_1}}{\D y^{a_1}}\cdots \frac{\D x^{i_r}}{\D y^{a_r}}\frac{\D w_{i_1\cdots i_r}}{\D x^i}.
\end{align}
Covariant differentiation is constructed so that the norms of tensors are invariant under changes of coordinates,
\begin{align}\label{norminv}
g^{a_1b_1}\cdots g^{a_rb_r} k_{a_1\cdots a_r}k_{b_1\cdots b_r}=\delta^{i_1j_1}\cdots \delta^{i_rj_r}w_{i_1\cdots i_r}w_{j_1\cdots j_r}.
\end{align}

Furthermore, we express in the $y$-coordinates,
\begin{align}\label{Di}
\D_i=\frac{\D}{\D x^i}=\frac{\D y^a}{\D x^i}\frac{\D}{\D y^a}.
\end{align}
Since the curvature vanishes in the $x$-coordinates, it must do so in the $y$-coordinates, and hence
\begin{align*}
[\nb_a,\nb_b]=0.
\end{align*}
Let us introduce the notation ${{k_{a\cdots}}^b}_{\cdots c}=g^{bd}k_{a\cdots d\cdots c}$, and recall that the covariant differentiation commutes with lowering and rising indices: $g^{ce}\nb_a k_{b\cdot e\cdots d}=\nb_a g^{ce}k_{b\cdot e\cdots d}$. We also introduce a notation for the material derivative
\begin{align*}
D_t=\left.\frac{\D}{\D t}\right|_{y=\textrm{const}}=\left.\frac{\D}{\D t}\right|_{x=\textrm{const}}+v^k\frac{\D}{\D x^k}.
\end{align*}
Then we have, from \cite[Lemma 2.2]{CL}, that
\begin{align}\label{Dt}
D_tk_{a_1\cdots a_r}=\frac{\D x^{i_1}}{\D y^{a_1}}\cdots \frac{\D x^{i_r}}{\D y^{a_r}}\left(D_t w_{i_1\cdots i_r}+\frac{\D v^\ell}{\D x^{i_1}}w_{\ell\cdots i_r}+\cdots +\frac{\D v^\ell}{\D x^{i_r}}w_{i_1\cdots\ell}\right).
\end{align}

We recall a result concerning time derivatives of the change of
coordinates and commutators between time derivatives and space derivatives (cf. \cite[Lemma 2.1]{CL,HLarma}).

\begin{lemma}\label{lem.CL2.1}
	Let $x=f_t(y)$ be the change of variables given by \eqref{trajectory}, and let $g_{ab}$ be the metric given by \eqref{metric1}. Let $v_i=\delta_{ij}v^j=v^i$, and set
	\begin{align}
	u_a(t,y)=&v_i(t,x)\frac{\D x^i}{\D y^a},  & u^a=&g^{ab}u_b,\label{CL2.1.1}\\
	h_{ab}=&\frac{1}{2}D_t g_{ab}, & h^{ab}=&g^{ac}h_{cd}g^{db}.\label{CL2.1.2}
	\end{align}
	Then
	\begin{align}\label{CL2.1.3}
	D_t\frac{\D x^i}{\D y^a}=\frac{\D x^k}{\D y^a} \frac{\D v^i}{\D x^k},\quad D_t\frac{\D y^a}{\D x^i}=-\frac{\D y^a}{\D x^k}\frac{\D v^k}{\D x^i},
	\end{align}
	\begin{align}\label{CL2.1.4}
	&D_t g_{ab}=\nb_a u_b+\nb_b u_a, \quad D_t g^{ab}=-2h^{ab},  \quad D_t d\mu_g=\tr h d\mu_g,
	\end{align}
	where $d\mu_g$ is the Riemannian volume element on $\Omega$ in the metric $g$.
\end{lemma}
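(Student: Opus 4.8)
The plan is to derive each identity by direct differentiation at fixed $y$, using that $D_t=\D_t|_{y=\mathrm{const}}$ commutes with $\D/\D y^a$ by equality of mixed partials. I would begin with the two Jacobian relations \eqref{CL2.1.3}. Since $\D x^i/\D t|_{y}=v^i(t,x(t,y))$ by the trajectory equation \eqref{trajectory}, interchanging $D_t$ with $\D/\D y^a$ and applying the chain rule gives
\begin{align*}
D_t\frac{\D x^i}{\D y^a}=\frac{\D}{\D y^a}\,v^i(t,x(t,y))=\frac{\D v^i}{\D x^k}\frac{\D x^k}{\D y^a},
\end{align*}
which is the first relation. The second is formal: differentiating $\frac{\D y^a}{\D x^k}\frac{\D x^k}{\D y^b}=\delta^a_b$ with $D_t$, substituting the relation just obtained, and contracting with $\D y^b/\D x^i$ isolates $D_t(\D y^a/\D x^i)=-(\D y^a/\D x^k)(\D v^k/\D x^i)$.

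Next, for the metric identity in \eqref{CL2.1.4} I would differentiate the definition \eqref{metric1} by the product rule and insert the first Jacobian relation, obtaining
\begin{align*}
D_t g_{ab}=\delta_{ij}\left(\frac{\D v^i}{\D x^k}\frac{\D x^k}{\D y^a}\frac{\D x^j}{\D y^b}+\frac{\D x^i}{\D y^a}\frac{\D v^j}{\D x^k}\frac{\D x^k}{\D y^b}\right).
\end{align*}
The key point is to recognize the right-hand side as $\nb_a u_b+\nb_b u_a$. Applying the transformation law \eqref{covtensor} to the $(0,1)$ tensor $v_i$ (and using $v_i=\delta_{ij}v^j$) gives $\nb_a u_b=(\D x^i/\D y^a)(\D x^j/\D y^b)(\D v_j/\D x^i)$; symmetrizing in $a,b$ and lowering the free $x$-index with $\delta$ reproduces the display term by term, yielding $D_t g_{ab}=\nb_a u_b+\nb_b u_a$.

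The last two identities are then algebraic. Differentiating $g^{ac}g_{cb}=\delta^a_b$ with $D_t$ and contracting with $g^{bd}$ gives $D_t g^{ad}=-g^{ac}g^{bd}D_t g_{cb}=-2h^{ad}$, directly from $h_{ab}=\frac12 D_t g_{ab}$ and the definition of $h^{ab}$. For the volume element I would write $d\mu_g=\sqrt{\det g}\,dy$ and apply Jacobi's formula $D_t\det g=(\det g)\,g^{ab}D_t g_{ab}$, so that $D_t\sqrt{\det g}=\frac12\sqrt{\det g}\,g^{ab}D_t g_{ab}=\sqrt{\det g}\,\tr h$; since $dy$ is constant in the Lagrangian frame this yields $D_t d\mu_g=\tr h\,d\mu_g$.

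The argument is essentially bookkeeping, and the only step demanding genuine care is the identification $D_t g_{ab}=\nb_a u_b+\nb_b u_a$: one must invoke \eqref{covtensor} to see that the bare $x$-derivatives of $v$ reassemble into covariant $y$-derivatives of $u$ with no surviving Christoffel terms. This succeeds precisely because $g_{ab}$ is the pullback of the flat Euclidean metric, so covariant differentiation in $y$ is the pullback of ordinary differentiation in $x$.
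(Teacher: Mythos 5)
Your proof is correct: the paper itself states this lemma without proof, citing \cite[Lemma~2.1]{CL}, and your direct computation (differentiating the trajectory equation for \eqref{CL2.1.3}, pulling back $\D v_j/\D x^i$ via \eqref{covtensor} to identify $D_tg_{ab}=\nb_au_b+\nb_bu_a$, then using $g^{ac}g_{cb}=\delta^a_b$ and Jacobi's formula for the remaining identities) is exactly the standard argument given there. No gaps; the one subtle point you correctly flag is that the flatness of the pulled-back metric is what lets the bare $x$-derivatives reassemble into covariant $y$-derivatives without Christoffel remainders.
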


We now recall the estimates of commutators between the material derivative $D_t$ and space derivatives $\D_i$ and covariant derivatives $\nb_a$.

\begin{lemma}[cf. \cite{CL}]
	Let $\D_i$ be given by \eqref{Di}. Then
	\begin{align}\label{DtDi}
	[D_t,\D_i]=-(\D_i v^k)\D_k.
	\end{align}
	Furthermore,
	\begin{align}\label{DtDir}
	[D_t,\D^r]=-\sum_{s=0}^{r-1}\bic_{r}^{s+1}(\D^{1+s}v)\cdot \D^{r-s},
	\end{align}
	where $\bic_r^s$ denotes the binomial coefficient defined by $\frac{r!}{(r-s)!s!}$ for $0\ls s\ls r$, the symmetric dot product is defined to be in components
	\begin{align*}
	\left((\D^{1+s}v)\cdot \D^{r-s}\right)_{i_1\cdots i_r}=\frac{1}{r!}\sum_{\sigma\in \Sigma_r}\left(\D_{i_{\sigma_1}\cdots i_{\sigma_{1+s}}}^{1+s} v^k\right)\D_{ki_{\sigma_{s+2}}\cdots i_{\sigma_r}}^{r-s},
	\end{align*}
	and $\Sigma_r$ denotes the collection of all permutations of $\{1,2,\cdots, r\}$.
\end{lemma}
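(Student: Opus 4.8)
The plan is to establish the two identities in order, since the first is both the $r=1$ case of the second and the engine that drives it.

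For \eqref{DtDi} I would compute the commutator on an arbitrary smooth scalar $f$. The key observation is that $D_t=\D/\D t|_{y}$ and $\D/\D y^a|_t$ are partial derivatives with respect to the \emph{independent} coordinates $(t,y)$, hence commute: $[D_t,\D/\D y^a]=0$. Writing $\D_i=\frac{\D y^a}{\D x^i}\frac{\D}{\D y^a}$ as in \eqref{Di} and applying the product rule, the term in which $D_t$ falls on $\D f/\D y^a$ cancels against $\D_i D_t f$, leaving
\begin{align*}
[D_t,\D_i]f=\left(D_t\frac{\D y^a}{\D x^i}\right)\frac{\D f}{\D y^a}.
\end{align*}
Substituting $D_t\frac{\D y^a}{\D x^i}=-\frac{\D y^a}{\D x^k}\frac{\D v^k}{\D x^i}$ from \eqref{CL2.1.3} and re-collecting $\frac{\D y^a}{\D x^k}\frac{\D}{\D y^a}=\D_k$ yields $[D_t,\D_i]f=-(\D_i v^k)\D_k f$, which is \eqref{DtDi}.

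For \eqref{DtDir} I would argue by induction on $r$, the base case $r=1$ being \eqref{DtDi} once one checks that the symmetric dot product $(\D^{1}v)\cdot\D^{1}$ reduces, after the trivial average over $\Sigma_1$, to $(\D_{i_1}v^k)\D_k$ with $\bic_1^1=1$. Since in the Euclidean $x$-coordinates the operators $\D_i=\D/\D x^i$ are ordinary partials with $[\D_i,\D_j]=0$, I may write $\D^{r+1}$ as $\D_{i_0}\D^r$ and use the elementary derivation identity
\begin{align*}
[D_t,\D^{r+1}]=[D_t,\D_{i_0}]\,\D^r+\D_{i_0}\,[D_t,\D^r].
\end{align*}
To the first summand I apply \eqref{DtDi}, producing the single term $-(\D_{i_0}v^k)\D_k\D^r$ with one derivative on $v$; to the second I apply the inductive hypothesis and then expand $\D_{i_0}$ by the product rule over each term $(\D^{1+s}v)\cdot\D^{r-s}$. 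That new derivative either lands on the factor $\D^{1+s}v$, raising it to $\D^{2+s}v$ while the operator order $r-s$ is unchanged, or it is prepended to the differential-operator factor, raising its order from $r-s$ to $r-s+1$ while the number of derivatives on $v$ stays $1+s$. Grouping all resulting terms by the number $p$ of derivatives falling on $v$, the $p$-group collects $\bic_r^{p-1}$ from the $v$-hitting branch (with the leading term \eqref{DtDi} playing the role of the $p-1=0$, $\bic_r^{0}=1$ contribution) and $\bic_r^{p}$ from the operator-hitting branch; Pascal's rule $\bic_r^{p-1}+\bic_r^{p}=\bic_{r+1}^{p}$ then delivers exactly the coefficient $\bic_{r+1}^{s+1}$ with $p=s+1$ demanded by \eqref{DtDir} at order $r+1$.

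The main obstacle is the precise bookkeeping of the symmetrization rather than any analytic difficulty. One must verify that after prepending the index $i_0$, the average $\frac{1}{r!}\sum_{\sigma\in\Sigma_r}$ of the inductive expression, combined with the leading term, reorganizes into the fully symmetric average $\frac{1}{(r+1)!}\sum_{\sigma\in\Sigma_{r+1}}$ with the correct multiplicities, so that the heuristic Pascal count above is literally valid at the level of symmetrized tensor components. This is a purely algebraic check, and it is the only place where genuine care is needed; everything else reduces to the product rule and the substitution of \eqref{CL2.1.3}. Finally, since each $\D_i$ is a flat partial and the identity is established componentwise, it extends verbatim from scalars to the components of any $(0,r)$ tensor, which is the form in which \eqref{DtDir} is used in the sequel.
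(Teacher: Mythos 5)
Your proof is correct and follows essentially the same route as the proof in \cite{CL}, to which the paper defers (the lemma is stated here without proof): first $[D_t,\D_i]f=\big(D_t\tfrac{\D y^a}{\D x^i}\big)\tfrac{\D f}{\D y^a}=-(\D_i v^k)\D_k f$ via \eqref{CL2.1.3}, then induction on $r$ using $[D_t,\D_{i_0}\D^r]=[D_t,\D_{i_0}]\D^r+\D_{i_0}[D_t,\D^r]$, grouping by the number $p$ of derivatives on $v$ and applying Pascal's rule $\bic_r^{p-1}+\bic_r^{p}=\bic_{r+1}^{p}$. The symmetrization bookkeeping you flag is indeed harmless: since flat partials commute, the components of $[D_t,\D^{r+1}]q$ are symmetric in all $r+1$ indices, so the grouped expression may be averaged over $\Sigma_{r+1}$, which turns your Pascal count directly into the stated symmetric dot products.
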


\begin{lemma}[cf. \cite{CL,HLarma}] \label{lem.CL2.4}
	Let $T_{a_1\cdots a_r}$ be a $(0,r)$ tensor. We have
	\begin{align}\label{CL2.4.1}
	[D_t,\nb_a]T_{a_1\cdots a_r}=-(\nb_{a_1}\nb_a u^d)T_{da_2\cdots a_r}-\cdots -(\nb_{a_r}\nb_a u^d)T_{a_1\cdots a_{r-1}d}.
	\end{align}
	If $\Delta=g^{cd}\nb_c\nb_d$ and $q$ is a function, we have
	\begin{align}
	[D_t,g^{ab}\nb_a]T_b=&-2h^{ab}\nb_a T_b-(\Delta u^e)T_e,\label{CL2.4.2}\\
	[D_t,\nb]q=&0, \label{Dtpcommu}\\
	[D_t,\Delta]q=&-2h^{ab}\nb_a\nb_b q-(\Delta u^e)\nb_e q.\label{CL2.4.3}
	\end{align}
	Furthermore, for $r\gs 2$,
	\begin{align}\label{commutator}
	[D_t,\nb^r]q=\sum_{s=1}^{r-1}-\bic_{r}^{s+1}(\nb^{s+1}u)\cdot \nb^{r-s} q,
	\end{align}
	where the symmetric dot product is defined to be in components
	\begin{align*}
	\left((\nb^{s+1}u)\cdot \nb^{r-s} q\right)_{a_1\cdots a_r}=\frac{1}{r!}\sum_{\sigma\in\Sigma_r}\left(\nb_{a_{\sigma_1}\cdots a_{\sigma_{s+1}}}^{s+1} u^d\right)\nb_{da_{\sigma_{s+2}}\cdots a_{\sigma_r}}^{r-s}q.
	\end{align*}
\end{lemma}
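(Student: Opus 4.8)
The plan is to establish the single identity \eqref{CL2.4.1} first, since the remaining formulas \eqref{CL2.4.2}--\eqref{commutator} all follow from it together with Lemma \ref{lem.CL2.1} and the flatness relation $[\nb_a,\nb_b]=0$. Writing $\nb_a T_{a_1\cdots a_r}=\D T_{a_1\cdots a_r}/\D y^a-\sum_{i}\Gamma^d_{aa_i}T_{a_1\cdots d\cdots a_r}$ and noting that $D_t=\D/\D t|_{y=\mathrm{const}}$ commutes with $\D/\D y^a$, one sees that $[D_t,\nb_a]$ acts only through the Christoffel symbols, namely $[D_t,\nb_a]T_{a_1\cdots a_r}=-\sum_i(D_t\Gamma^d_{aa_i})T_{a_1\cdots d\cdots a_r}$. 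Thus everything reduces to computing $D_t\Gamma^c_{ab}$. Using $D_t g_{ab}=\nb_a u_b+\nb_b u_a$ from \eqref{CL2.1.4} together with the standard variation formula $D_t\Gamma^c_{ab}=\tfrac12 g^{cd}(\nb_a D_t g_{bd}+\nb_b D_t g_{ad}-\nb_d D_t g_{ab})$ (the variation of a connection being a tensor), the six resulting second-derivative terms collapse under $[\nb_a,\nb_b]=0$ to $D_t\Gamma^c_{ab}=g^{cd}\nb_a\nb_b u_d=\nb_a\nb_b u^c$. Substituting this and relabelling gives exactly \eqref{CL2.4.1}. Alternatively one may differentiate the explicit expression $\Gamma^c_{ab}=(\D y^c/\D x^i)(\D^2x^i/\D y^a\D y^b)$ directly with \eqref{CL2.1.3}.

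Given \eqref{CL2.4.1}, the scalar case \eqref{Dtpcommu} is immediate because a function carries no Christoffel correction, so $\nb_a q=\D q/\D y^a$ and $[D_t,\nb_a]q=0$. For \eqref{CL2.4.2} I would expand $D_t(g^{ab}\nb_a T_b)$ by Leibniz, using $D_tg^{ab}=-2h^{ab}$ to produce $-2h^{ab}\nb_aT_b$ and the $r=1$ instance of \eqref{CL2.4.1} to produce $-g^{ab}(\nb_b\nb_a u^d)T_d=-(\Delta u^e)T_e$, after subtracting off $g^{ab}\nb_a D_tT_b$. Formula \eqref{CL2.4.3} is then obtained by applying this computation to the covector $T_b=\nb_b q$: the Leibniz expansion of $D_t(g^{ab}\nb_a\nb_b q)$ again yields $-2h^{ab}\nb_a\nb_b q$ from $D_tg^{ab}$, the term $-(\Delta u^e)\nb_e q$ from the commutator \eqref{CL2.4.1}, and the remaining piece recombines into $\Delta D_t q$ once \eqref{Dtpcommu} is used to pass $D_t$ through the inner gradient.

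The higher-order identity \eqref{commutator} I would prove by induction on $r$, the base case $r=2$ being a direct consequence of the $r=1$ instance of \eqref{CL2.4.1} applied to $T_b=\nb_b q$, which reproduces $-(\nb_{a_1}\nb_{a_2}u^d)\nb_d q$ and matches the single $s=1$ term. For the inductive step I write $\nb^{r+1}q=\nb(\nb^r q)$ and split $[D_t,\nb^{r+1}]q=\nb\big([D_t,\nb^r]q\big)+[D_t,\nb_{a_0}](\nb^r q)$; the first term is handled by applying $\nb$ to the induction hypothesis, where Leibniz raises either the $\nb^{s+1}u$ factor to $\nb^{s+2}u$ or the $\nb^{r-s}q$ factor to $\nb^{r-s+1}q$, while the second term is governed by \eqref{CL2.4.1}, producing $r$ copies of the $s=1$ contribution $(\nb^2u)\cdot\nb^r q$ after symmetrization. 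The main obstacle is precisely this bookkeeping: one must show that the two families of terms reassemble, index by index, into the claimed symmetrized dot products, and that their coefficients combine through Pascal's rule $\bic_{r+1}^{s+1}=\bic_r^{s+1}+\bic_r^{s}$ to give exactly $-\bic_{r+1}^{s+1}$. The symmetrization $\tfrac1{r!}\sum_{\sigma\in\Sigma_r}$ in the definition of the dot product is what makes the permuted indices arising from the Leibniz and the commutator terms agree, and verifying this cancellation is the delicate step; flatness $[\nb_a,\nb_b]=0$ is used throughout to treat $\nb^k u$ and $\nb^k q$ as fully symmetric tensors.
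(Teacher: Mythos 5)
Your proof is correct, and it is essentially the standard argument: the paper itself does not prove this lemma but cites \cite{CL,HLarma}, and your route — observing that $D_t$ commutes with $\D/\D y^a$ so that $[D_t,\nb_a]$ acts only through $D_t\Gamma^c_{ab}$, computing $D_t\Gamma^c_{ab}=\nb_a\nb_b u^c$ from $D_tg_{ab}=\nb_au_b+\nb_bu_a$ and $[\nb_a,\nb_b]=0$, deducing \eqref{Dtpcommu}--\eqref{CL2.4.3} by Leibniz with $D_tg^{ab}=-2h^{ab}$, and proving \eqref{commutator} by induction via the splitting $[D_t,\nb_{a_0}\nb^r]q=[D_t,\nb_{a_0}](\nb^rq)+\nb_{a_0}([D_t,\nb^r]q)$ with Pascal's rule $\bic_{r+1}^{s+1}=\bic_r^{s+1}+\bic_r^{s}$ — is precisely the proof in \cite{CL}. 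Your coefficient check is right (e.g.\ for the $(\nb^2u)\cdot\nb^{r}q$ terms, the $r$ copies from the commutator plus $\bic_r^2$ from Leibniz give $\bic_{r+1}^2$), and the symmetrization bookkeeping you flag as delicate is harmless because flatness makes $\nb^{r+1}q$ and all the $\nb^ku$, $\nb^kq$ factors fully symmetric, so symmetrizing over $\Sigma_{r+1}$ loses no information.
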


\begin{remark}
	It follows from \eqref{commutator} that for $r\gs 2$ and a function $q$,
	\begin{align*}
	D_t\nb^r q+\nb^ru\cdot\nb q=\nb^rD_tq-\sgn(r-2)\sum_{s=1}^{r-2}\bic_{r}^{s+1}(\nb^{s+1}u)
	\cdot \nb^{r-s} q.
	\end{align*}
\end{remark}

Denote
\begin{align*}
& H_i=\delta_{ij} H^j= H^i, \quad \beta_a= H_j\frac{\D x^j}{\D y^a},\quad \beta^a=g^{ab}\beta_b, \quad
|\beta|^2=\beta_a\beta^a,\\
&\hat{H}_i=\delta_{ij}\hat{H}^j=\hat{H}^i, \quad \varpi_a=\hat{H}_j\frac{\D x^j}{\D y^a}, \quad \varpi^a=g^{ab}\varpi_b,\quad |\varpi|^2=\varpi^a\varpi_a,\\
&\hat{E}_i=\delta_{ij}\hat{E}^j=\hat{E}^i, \quad \varXi_a=\hat{E}_j\frac{\D x^j}{\D y^a}, \quad \varXi^a=g^{ab}\varXi_b,\quad |\varXi|^2=\varXi^a\varXi_a.
\end{align*}
It follows from \eqref{norminv} that
\begin{align}\label{beta}
|\beta|=|\vH|, \; |\varpi|=|\hat{\vH}|, \; |\varXi|=|\hat{\el}|,\;  H_j=\frac{\D y^a}{\D x^j}\beta_a, \; \hat{H}_j=\frac{\D y^a}{\D x^j}\varpi_a, \; \hat{E}_j=\frac{\D y^a}{\D x^j}\varXi_a.
\end{align}

From \eqref{CL2.1.3} and \eqref{covtensor}, we have
\begin{align*}
D_t \varpi_a=&D_t \left(\hat{H}_j\frac{\D x^j}{\D y^a}\right) =\frac{\D x^j}{\D y^a}D_t  \hat{H}_j+\hat{H}_jD_t \frac{\D x^j}{\D y^a}\\
=&\frac{\D x^j}{\D y^a}\left(-(\curl\hat{\el})_j+v^k\D_k\hat{H}_j\right)+\hat{H}_j \frac{\D x^k}{\D y^a} \frac{\D v^j}{\D x^k}\\
=&-\frac{\D x^j}{\D y^a}(\curl\hat{\el})_j+\frac{\D x^j}{\D y^a}\frac{\D x^k}{\D y^b}\frac{\D y^b}{\D x^l}v^l\D_k\hat{H}_j +\hat{H}_j \frac{\D x^k}{\D y^a} \frac{\D v^l}{\D x^k}\frac{\D x^j}{\D y^b}\frac{\D y^b}{\D x^l}\\
=&-\frac{\D x^j}{\D y^a}(\curl\hat{\el})_j+u^b\nb_b\varpi_a+\varpi_b\nb_a u^b.
\end{align*}
Due to $\det(\D y/\D x)\equiv 1$, we get
\begin{align*}
\frac{\D x^j}{\D y^a}(\curl\hat{\el})_j=&\frac{\D x^1}{\D y^a}\left(\frac{\D\hat{E}_3}{\D x^2}-\frac{\D\hat{E}_2}{\D x^3}\right)+\frac{\D x^2}{\D y^a}\left(\frac{\D\hat{E}_1}{\D x^3}-\frac{\D\hat{E}_3}{\D x^1}\right)+\frac{\D x^3}{\D y^a}\left(\frac{\D\hat{E}_2}{\D x^1}-\frac{\D\hat{E}_1}{\D x^2}\right)\\
=&\frac{\D x^1}{\D y^a}\frac{\D x^1}{\D y^d}\frac{\D y^d}{\D x^1}\left(\frac{\D x^2}{\D y^b}\frac{\D y^b}{\D x^2}\frac{\D x^3}{\D y^c}\frac{\D y^c}{\D x^3}\frac{\D\hat{E}_3}{\D x^2}-\frac{\D x^3}{\D y^b}\frac{\D y^b}{\D x^3}\frac{\D x^2}{\D y^c}\frac{\D y^c}{\D x^2}\frac{\D\hat{E}_2}{\D x^3}\right)\\
&+\frac{\D x^2}{\D y^a}\frac{\D x^2}{\D y^d}\frac{\D y^d}{\D x^2}\left(\frac{\D x^3}{\D y^b}\frac{\D y^b}{\D x^3}\frac{\D x^1}{\D y^c}\frac{\D y^c}{\D x^1}\frac{\D\hat{E}_1}{\D x^3}-\frac{\D x^1}{\D y^b}\frac{\D y^b}{\D x^1}\frac{\D x^3}{\D y^c}\frac{\D y^c}{\D x^3}\frac{\D\hat{E}_3}{\D x^1}\right)\\
&+\frac{\D x^3}{\D y^a}\frac{\D x^3}{\D y^d}\frac{\D y^d}{\D x^3}\left(\frac{\D x^1}{\D y^b}\frac{\D y^b}{\D x^1}\frac{\D x^2}{\D y^c}\frac{\D y^c}{\D x^2}\frac{\D\hat{E}_2}{\D x^1}-\frac{\D x^2}{\D y^b}\frac{\D y^b}{\D x^2}\frac{\D x^1}{\D y^c}\frac{\D y^c}{\D x^1}\frac{\D\hat{E}_1}{\D x^2}\right)\\
=&g_{ad}\nb_b\varXi_c\left[\frac{\D y^d}{\D x^1}\left(\frac{\D y^b}{\D x^2}\frac{\D y^c}{\D x^3}-\frac{\D y^b}{\D x^3}\frac{\D y^c}{\D x^2}\right)+\frac{\D y^d}{\D x^2}\left(\frac{\D y^b}{\D x^3}\frac{\D y^c}{\D x^1}-\frac{\D y^b}{\D x^1}\frac{\D y^c}{\D x^3}\right)\right.\\
&\qquad\qquad+\left.\frac{\D y^d}{\D x^3}\left(\frac{\D y^b}{\D x^1}\frac{\D y^c}{\D x^2}-\frac{\D y^b}{\D x^2}\frac{\D y^c}{\D x^1}\right)\right]\\
=&g_{ad}\nb_b\varXi_c\det\left(\frac{\D(y^d,y^b,y^c)}{\D(x^1,x^2,x^3)}\right)
=g_{ad}\nb_b\varXi_c \eps^{dbc}\det\left(\frac{\D y}{\D x}\right)\\
=&(\curl \varXi)_a,
\end{align*}
where $\eps_{ijk}$ denotes the Levi-Civita symbol defined as follows:
\begin{align*}
\eps_{ijk} =
\begin{cases}
+1 & \text{if } (i,j,k) \text{ is } (1,2,3), (3,1,2) \text{ or } (2,3,1), \\
-1 & \text{if } (i,j,k) \text{ is } (1,3,2), (3,2,1) \text{ or } (2,1,3), \\
\;\;\,0 & \text{if }i=j \text{ or } j=k \text{ or } k=i,
\end{cases}
\end{align*}
which satisfies $\eps^{ijk}\equiv\eps_{ijk}$ in both Eulerian coordinates and  Lagrangian coordinates,
and the $i$th component of the curl of the vector $F$ reads
\begin{align*}
(\curl F)^i=\eps^{ijk}\D_j F_k, \text{ and } (\curl F)^i=\eps^{iab}\nb_a F_b,
\end{align*}
in Eulerian coordinates and Lagrangian coordinates, respectively.

Thus, we have obtained
\begin{align*}
D_t  \varpi_a=-(\curl \varXi)_a+u^b\nb_b\varpi_a+\varpi_b\nb_a u^b.
\end{align*}
Similarly, we get
\begin{align*}
\curl \varpi=0.
\end{align*}
We also have those equations for $D_t u_a$ and $D_t\beta_a$, one can see  \cite{HLarma} for details.

Thus, the system \eqref{mhd1} can be written in the Lagrangian coordinates, for $t>0$, as
\begin{subequations}\label{mhd41}
	\begin{numcases}{}
	D_tu_a+\nb_a  q^+  =u^c\nb_a u_c+\mu \beta^d\nb_d\beta_a, \quad\quad\quad \text{ in } \Omega^+,\label{mhd41.1}\\
	D_t\beta_a=\beta^d\nb_d u_a+\beta^c\nb_a u_c, \quad\quad\quad\quad\quad\quad\quad \text{ in }\Omega^+,\label{mhd41.2}\\
	D_t  \varpi_a=-(\curl \varXi)_a+u^b\nb_b\varpi_a+\varpi_b\nb_a u^b,\quad\; \text{ in } \Omega^-,\label{mhd41.7}\\
	\nb_a u^a=0 \text{ and } \nb_a\beta^a=0, \quad\quad\quad\quad\quad\quad\quad\quad \text{ in } \Omega^+,\label{mhd41.3}\\
	\nb_a u^a=0,\; \curl \varpi=0,\;  \nb_a\varpi^a=0, \text{ and } \nb_a\varXi^a=0, \;\text{ in } \Omega^-,\label{mhd41.3a}\\
	P=0, \quad \beta_aN^a=\varpi_aN^a=0, \quad\quad\quad\quad\quad\;\;\;\quad \text{ on }  \Gamma,\label{mhd41.4}\\
	N\times\varXi=u_N \varpi,\quad\quad\quad\quad\quad\quad\quad\quad\quad\quad\quad\quad\; \text{ on }  \Gamma,\label{mhd41.5}\\
	u=0,\;\varpi_aN^a=0, \; \varXi\times N=0, \quad\quad\quad\quad\quad\quad\; \text{ on }  W,\label{mhd41.6}
	\end{numcases}
\end{subequations}
where $N$ is the unit normal vector pointing into the interior of $\Omega^-$.

Finally, the energy defined by
\begin{align*}
E_0(t)=&\int_{\Omega^+} \left(\frac{1}{2}|u(t)|^2+\frac{\mu}{2}|\beta(t)|^2\right)d\mu_g
+\int_{\Omega^-}\frac{\mu}{2}|\varpi(t)|^2 d\mu_g
\end{align*}
is conserved. Of course, it is the equivalent one as in Eulerian coordinates. It can be easily verified by using the Gauss formula:
\begin{align}\label{Gauss}
\int_{\Omega^+} \nb_a F^a d\mu_g=\int_{\Gamma} N_a F^a d\mu_\gamma, \quad  \int_{\Omega^-} \nb_a F^a d\mu_g=-\int_{\Gamma\cup W} N_a F^a d\mu_\gamma,
\end{align}
where $F$ is a smooth vector-valued function, $N_a=g_{ab}N^b$  denotes the unit conormal, $g^{ab}N_aN_b=1$,  $N^a$ denotes the outward (or inward) unit normal to $\Gamma$ (and $W$) corresponding to $\Omega^+$ (or $\Omega^-$), $d\mu_\gamma$ is the volume element on boundaries, and the induced metric $\gamma$ on the tangent space to the boundary $T(\Gamma)$ (and $T(W)$) extended to be $0$ on the orthogonal complement in $T(\Omega^+)$ (and $\Omega^-$) is then given by
\begin{align*}
\gamma_{ab}=g_{ab}-N_aN_b,\quad \gamma^{ab}=g^{ab}-N^aN^b.
\end{align*}

The orthogonal projection of a $(r,s)$ tensor $S$ to the boundaries is given by
\begin{align*}
(\Pi S)_{b_1\cdots b_s}^{a_1\cdots a_r}=\gamma_{c_1}^{a_1}\cdots\gamma_{c_r}^{a_r}\gamma_{b_1}^{d_1}\cdots \gamma_{b_s}^{d_s}S_{d_1\cdots d_s}^{c_1\cdots c_r},
\end{align*}
where 
\begin{align}\label{gammaauc}
\gamma_a^c=\delta_a^c-N_aN^c.
\end{align}
Covariant differentiation on the boundary $\bnb$ is given by
$$\bnb S=\Pi\nb S,$$
and $\bnb$ is invariantly defined since the projection and the covariant derivatives are.
The second fundamental form of the boundary is given by
$$\theta_{ab}=(\bnb N)_{ab}=\gamma_a^c\nb_c N_b.$$
We need to extent the normal to a vector field defined and regular everywhere in the interior of $\Omega^+$ and $\Omega^-$ such  that when the geodesic distance to the boundaries $d(t,y)\ls \iota_0/4$, it is the normal to the set $\{y:\; d(t,y)=\text{const}\}$, and in the interior it drops off to $0$. We also denote the extension of the normal by $N^a$ which satisfies $|\nb N|\ls 2\norm{\theta}_{L^\infty(\Gamma\cup W)}$. Then we extent $\gamma$ to the pseudo-Riemannian metric $\gamma$ given by $\gamma_{ab}=g_{ab}-N_aN_b$ which satisfies $\nb\gamma|_{L^\infty(\Omega)}\ls C(\norm{\theta}_{L^\infty(\Gamma\cup W)}+1/\iota_0)$. One can see \cite{CL} for more details about the derivation of the gradient estimates of the extensions $N$ and $\gamma$.

\section{The Estimates of the magnetic field in vacuum}\label{sec.3}

Since $\Omega^-$ is simply connected, the equations $\nb_a\varpi^a=0$ and $\nb\times \varpi=0$ imply $\varpi(t,y)=\nb \varphi(t,y)$, where $\varphi$ is a solution of the Neumann problem
\begin{numcases}{}
\Delta\varphi=0,\quad\;\; \text{in } \Omega^-,\label{phi1}\\
\nb_N\varphi=0, \quad \text{on } \Gamma\cup W.\label{phi2}
\end{numcases}

For the derivatives of $\varpi$, we have the following $L^2$-estimates.

\begin{proposition}\label{prop.v.r}
	Let $r\gs 0$ be an integer. If $|\theta|+1/\iota_0\ls K$ on $\Gamma\cup W$, $|\nb N|\ls CK$ in $\Omega^-$, then it holds
	\begin{align*}
	\norm{\nb^{r+1}\varpi}_{L^2(\Omega^-)}^2\ls CK^{2(r+1)}E_0(0),\quad
	\norm{\nb^{r} \varpi}_{L^2(\Gamma\cup W)}^2
	\ls CK^{2r+1}E_0(0).
	\end{align*}
\end{proposition}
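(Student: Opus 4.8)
The plan is to exploit the reduction carried out just before the statement: since $\Omega^-$ is simply connected with $\nb_a\varpi^a=0$ and $\curl\varpi=0$, we have $\varpi=\nb\varphi$ where $\varphi$ solves the Neumann problem \eqref{phi1}--\eqref{phi2}, so that $\varpi$ is simultaneously divergence-free, curl-free, and tangent to $\partial\Omega^-$ (indeed $\varpi_aN^a=\nb_N\varphi=0$ on $\Gamma\cup W$). Two structural facts will drive everything. First, because the metric $g$ is flat, covariant derivatives commute, and $\Delta\varpi_b=\nb_b(\nb_a\varpi^a)+\nb^c(\curl\varpi)_{cb}=0$; hence $\Delta(\nb^r\varpi)=\nb^r\Delta\varpi=0$ for every $r$, i.e. all derivatives of $\varpi$ are harmonic tensors. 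Second, the conserved energy supplies the base bound $\norm{\varpi}_{L^2(\Omega^-)}^2\ls \tfrac{2}{\mu}E_0(0)=:M^2$, which will seed an induction on $r$.

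For the interior estimate I would invoke the elliptic regularity estimate for harmonic fields of bounded geometry (the div-curl estimate from the appendix, in the spirit of \cite{CL}): for a curl-free, divergence-free, boundary-tangent field with $|\theta|+1/\iota_0\ls K$ and $|\nb N|\ls CK$, one has $\norm{\nb^{r+1}\varpi}_{L^2(\Omega^-)}\ls CK^{r+1}\norm{\varpi}_{L^2(\Omega^-)}$. Combined with the energy bound this yields directly $\norm{\nb^{r+1}\varpi}_{L^2(\Omega^-)}^2\ls CK^{2(r+1)}M^2\ls CK^{2(r+1)}E_0(0)$. This elliptic estimate is itself established by induction on $r$: integrating $\int_{\Omega^-}|\nb^{r+1}\varpi|^2\,d\mu_g$ by parts once, the interior term carries the factor $\Delta(\nb^r\varpi)=0$ and therefore vanishes, leaving the pure boundary integral $\int_{\Gamma\cup W}N^a(\nb^r\varpi)_A\nb_a(\nb^r\varpi)^A\,d\mu_\gamma$. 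In the base case $r=0$ this reduces, using $\curl\varpi=0$ and $\varpi_aN^a=0$, to $\norm{\nb\varpi}_{L^2(\Omega^-)}^2=-\int_{\Gamma\cup W}\theta_{ab}\varpi^a\varpi^b\,d\mu_\gamma\ls K\norm{\varpi}_{L^2(\Gamma\cup W)}^2$; the trace inequality then gives $\norm{\nb\varpi}_{L^2(\Omega^-)}^2\ls CK^2M^2$ after a Young absorption of $\norm{\nb\varpi}_{L^2(\Omega^-)}$.

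For the boundary estimate I would apply the trace inequality of the appendix to $f=\nb^r\varpi$, namely $\norm{\nb^r\varpi}_{L^2(\Gamma\cup W)}^2\ls C\big(\norm{\nb^{r+1}\varpi}_{L^2(\Omega^-)}\norm{\nb^r\varpi}_{L^2(\Omega^-)}+K\norm{\nb^r\varpi}_{L^2(\Omega^-)}^2\big)$. Feeding in the already-proved interior bounds $\norm{\nb^{r+1}\varpi}_{L^2(\Omega^-)}\ls CK^{r+1}M$ and $\norm{\nb^r\varpi}_{L^2(\Omega^-)}\ls CK^rM$ gives $\norm{\nb^r\varpi}_{L^2(\Gamma\cup W)}^2\ls C\big(K^{r+1}M\cdot K^rM+K\cdot K^{2r}M^2\big)=CK^{2r+1}M^2\ls CK^{2r+1}E_0(0)$, which is exactly the asserted trace bound, the two powers of $K$ in the two terms fortunately coinciding.

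The main obstacle is the inductive step of the elliptic estimate, i.e. controlling the boundary integral $\int_{\Gamma\cup W}N^a(\nb^r\varpi)_A\nb_a(\nb^r\varpi)^A\,d\mu_\gamma$ without generating the uncontrolled top-order trace $\norm{\nb^{r+1}\varpi}_{L^2(\Gamma\cup W)}$. Here one decomposes $\nb_a(\nb^r\varpi)$ into its tangential part $\bnb$ and its genuinely normal part, and uses harmonicity in the form $\nb_N\nb_N f=-\gamma^{ab}\bnb_a\bnb_b f-(\tr\theta)\nb_N f+O(K)f$ applied to $f=\nb^{r-1}\varpi$ to trade each superfluous normal derivative for tangential ones at the cost of exactly one factor of the second fundamental form, hence one power of $K$. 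Integrating by parts tangentially on the \emph{closed} surfaces $\Gamma$ and $W$ (which have no boundary, so only curvature terms are produced), then reapplying the trace inequality and absorbing the resulting top-order interior term by Young's inequality, produces precisely the extra factor $K^2$ per step. The delicate part is the bookkeeping that guarantees each additional derivative costs one and only one power of $K$, which relies on the joint use of $\Delta\varpi=0$, $\curl\varpi=0$, the extension bound $|\nb N|\ls CK$, and the flatness $[\nb_a,\nb_b]=0$; the remaining manipulations are routine.
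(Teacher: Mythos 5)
Your proposal is correct and follows essentially the same route as the paper: reduce to the harmonic Neumann potential $\varphi$, seed the induction with the conserved energy $\norm{\varpi}_{L^2(\Omega^-)}^2\ls CE_0(0)$, gain one derivative per power of $K$ in the interior via integration by parts combined with the div--curl structure ($\dv\varpi=\curl\varpi=0$) and tangential integration by parts on the closed surfaces $\Gamma\cup W$, and obtain the boundary bound from the Gauss-formula trace identity $\norm{\nb^r\varpi}_{L^2(\Gamma\cup W)}^2=-\int_{\Omega^-}\nb_a(N^a|\nb^r\varpi|^2)\,d\mu_g$. The only organizational difference is that the paper applies the pointwise div--curl inequality of Lemma~\ref{lem.CL5.5} \emph{before} integrating by parts, so that the uncontrolled normal derivative $\nb_N\nb^r\varpi$ you worry about never appears on the boundary, whereas you eliminate it afterwards via the harmonicity identity; both devices are equivalent in substance.
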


\begin{proof}
	We use the induction argument to show, for any integer $s\gs 0$, that
	\begin{align*}
	\norm{\nb^{s+1}\varpi}_{L^2(\Omega^-)}^2\ls CK^{2(s+1)}E_0(0),\quad
	\norm{\nb^{s} \varpi}_{L^2(\Gamma\cup W)}^2
	\ls CK^{2s+2}E_0(0).
	\end{align*}
	
	We first prove the case $s=0$. By \eqref{phi1}-\eqref{phi2} and  the H\"older inequality, we have
	\begin{align*}
	\norm{\nb^2\varphi}_{L^2(\Omega^-)}^2=&\int_{\Omega^-} g^{ab}g^{cd}\nb_c\nb_a\varphi \nb_d\nb_b\varphi d\mu_g\\
	=&\int_{\Omega^-} g^{ab}g^{cd}\nb_a(\nb_c\varphi \nb_d\nb_b\varphi) d\mu_g-\int_{\Omega^-} g^{cd}\nb_c\varphi \nb_d\Delta\varphi d\mu_g\\
	=&-\int_{\Gamma\cup W}N^bg^{cd}\nb_c\varphi \nb_d\nb_b\varphi d\mu_\gamma\\
	=&-\int_{\Gamma\cup W}g^{cd}\nb_c\varphi \nb_d(N^b\nb_b\varphi) d\mu_\gamma +\int_{\Gamma\cup W}g^{cd}\nb_c\varphi \nb_dN^b \nb_b\varphi d\mu_\gamma\\
	=&-\int_{\Gamma\cup W}\nb_N\varphi \nb_N^2\varphi d\mu_\gamma-\int_{\Gamma\cup W}\gamma^{cd}\nb_c\varphi \nb_d(\nb_N\varphi) d\mu_\gamma\\
	& +\int_{\Gamma\cup W}g^{cd}\nb_c\varphi \nb_dN^b \nb_b\varphi d\mu_\gamma\\
	=&\int_{\Gamma\cup W}\gamma^{cd}\nb_c\varphi \nb_dN^b \nb_b\varphi d\mu_\gamma\\
	\ls 
	&C\norm{\theta}_{L^\infty(\Gamma\cup W)}\norm{\varpi}_{L^2(\Gamma\cup W)}^2.
	\end{align*}
	We get, from Gauss' formula, H\"older's inequalities and Cauchy's inequality, that
	\begin{align}
	\norm{ \varpi}_{L^2(\Gamma\cup W)}^2=&\int_{\Gamma\cup W}\varpi^c\varpi_c d\mu_\gamma=\int_{\Gamma\cup W}N_aN^a\varpi^c\varpi_c d\mu_\gamma\no\\
	=&-\int_{\Omega^-} \nb_a(N^a\varpi^c\varpi_c) d\mu_g\no\\
	=&-\int_{\Omega^-} \nb_aN^a\varpi^c\varpi_c d\mu_g-2\int_{\Omega^-} N^a\nb_a\varpi^c\varpi_c d\mu_g\no\\
	\ls &C\norm{\tr(\nb N)}_{L^\infty(\Omega^-)}\norm{\varpi}_{L^2(\Omega^-)}^2+C\norm{\nb\varpi}_{L^2(\Omega^-)}\norm{\varpi}_{L^2(\Omega^-)}\label{v0.1}\\
	\ls &C\norm{\tr(\nb N)}_{L^\infty(\Omega^-)}\norm{\varpi}_{L^2(\Omega^-)}^2
	+\delta\norm{\nb\varpi}_{L^2(\Omega^-)}^2+C/\delta\norm{\varpi}_{L^2(\Omega^-)}^2,\no
	\end{align}
	for any $\delta>0$. Thus,  it follows that
	\begin{align*}
	\norm{\nb\varpi}_{L^2(\Omega^-)}^2\ls & C\norm{\theta}_{L^\infty(\Gamma\cup W)}\norm{\nb N}_{L^\infty(\Omega^-)}\norm{\varpi}_{L^2(\Omega^-)}^2
	+C\delta\norm{\theta}_{L^\infty(\Gamma\cup W)}\norm{\nb\varpi}_{L^2(\Omega^-)}^2\\
	&+C/\delta \norm{\theta}_{L^\infty(\Gamma\cup W)}\norm{\varpi}_{L^2(\Omega^-)}^2.
	\end{align*}
	Taking $\delta$ so small that $C\delta K=1/2$,  we get
	\begin{align*}
	\norm{\nb\varpi}_{L^2(\Omega^-)}^2\ls & C\norm{\theta}_{L^\infty(\Gamma\cup W)}\norm{\nb N}_{L^\infty(\Omega^-)}\norm{\varpi}_{L^2(\Omega^-)}^2
	+C/\delta \norm{\theta}_{L^\infty(\Gamma\cup W)}\norm{\varpi}_{L^2(\Omega^-)}^2\\
	\ls &CK^2\norm{\varpi}_{L^2(\Omega^-)}^2\ls CK^2E_0(0).
	\end{align*}
	We also have, with the help of \eqref{v0.1}, that
	\begin{align*}
	\norm{ \varpi}_{L^2(\Gamma\cup W)}^2\ls &CK\norm{\varpi}_{L^2(\Omega^-)}^2\ls CKE_0(0).
	\end{align*}

	Now, we assume that the claims are true for the cases $s= r-1$, then we prove the case $s=r$.
	
	From Gauss' formula, H\"older's inequality and Cauchy's inequality, it follows
	\begin{align}
	\norm{\nb^r\varpi}_{L^2(\Gamma\cup W)}^2=&\int_{\Gamma\cup W}N_aN^a|\nb^r\varpi|^2d\mu_\gamma
	=-\int_{\Omega^-}\nb_a(N^a|\nb^r\varpi|^2)d\mu_g\no\\
	\ls & CK\norm{\nb^r\varpi}_{L^2(\Omega^-)}^2+C\norm{\nb^r\varpi}_{L^2(\Omega^-)}\norm{\nb^{r+1}\varpi}_{L^2(\Omega^-)}.\label{v.r.0}
	\end{align}
	Since $\dv\varpi=0$ and $\nb\times\varpi=0$, we have, from \eqref{divcurl} and \eqref{divcurlint}, that
	\begin{align*}
	|\nb^{r+1}\varpi|^2\ls& C_1g^{bc}\gamma^{af}\gamma^{AF}\nb^{r}_A\nb_a\varpi_b\nb^{r}_F\nb_f\varpi_c,\\
	\int_{\Omega^-}|\nb^{r+1}\varpi|^2 d\mu_g\ls& C_2\int_{\Omega^-} (g^{bc}N^aN^f\gamma^{AF}\nb^{r}_A\nb_b\varpi_a\nb^{r}_F\nb_c\varpi_f +K^2|\nb^r\varpi|^2)d\mu_g.
	\end{align*}
	Noticing that $\nb\times\varpi=0$, it follows from the Gauss formula that
	\begin{align}
	&(C_1^{-1}+C_2^{-1})\norm{\nb^{r+1}\varpi}_{L^2(\Omega^-)}^2\no\\
	\ls &\int_{\Omega^-} g^{bc}\gamma^{af}\gamma^{AF}\nb^{r}_A\nb_a\varpi_b\nb^{r}_F\nb_c\varpi_f d\mu_g\no\\
	&+\int_{\Omega^-} (g^{bc}N^aN^f\gamma^{AF}\nb^{r}_A\nb_a\varpi_b\nb^{r}_F\nb_c\varpi_f +K^2|\nb^r\varpi|^2)d\mu_g\no\\
	=&\int_{\Omega^-} g^{bc}\nb_c(g^{af}\gamma^{AF}\nb^{r}_A\nb_a\varpi_b\nb^{r}_F\varpi_f) d\mu_g\label{v.r.1}\\
	&-\int_{\Omega^-} g^{bc}g^{af}\nb_c(\gamma^{AF})\nb^{r}_A\nb_a\varpi_b\nb^{r}_F\varpi_f d\mu_g\label{v.r.2}\\
	&+\int_{\Omega^-}K^2|\nb^r\varpi|^2d\mu_g.\label{v.r.3}
	\end{align}
	
	By the H\"older inequalities and the Cauchy inequality, we get
	\begin{align*}
	|\eqref{v.r.2}|\ls &CK\norm{\nb^r\varpi}_{L^2(\Omega^-)}\norm{\nb^{r+1}\varpi}_{L^2(\Omega^-)}.
	\end{align*}
	
	We write
	\begin{align}
	\eqref{v.r.1}=&-\int_{\Gamma\cup W} N^bg^{af}\gamma^{AF}\nb^{r}_A\nb_a\varpi_b\nb^{r}_F\varpi_f d\mu_\gamma\no\\
	=&-\int_{\Gamma\cup W} \gamma^{af}\nb_a(N^b\gamma^{AF}\nb^{r}_A\varpi_b\nb^{r}_F\varpi_f-N_fN^cN^b\gamma^{AF}\nb^{r}_A\varpi_b\nb^{r}_F\varpi_c) d\mu_\gamma\label{v.r.4.1}\\
	&+\int_{\Gamma\cup W} \gamma^{af}\nb_a(N^b\gamma^{AF})\nb^{r}_A\varpi_b\nb^{r}_F\varpi_f d\mu_\gamma\label{v.r.4.2}\\
	&+\int_{\Gamma\cup W} \gamma^{af}N^b\gamma^{AF}\nb^{r}_A\varpi_b\nb^{r}_F\nb_a\varpi_f d\mu_\gamma\label{v.r.4.3}\\
	&+\int_{\Gamma\cup W} N^aN^fN^b\gamma^{AF}\nb^{r}_A\varpi_b\nb^{r}_F\nb_a\varpi_f d\mu_\gamma\label{v.r.4.4}\\
	&-\int_{\Gamma\cup W} \gamma^{af}\nb_aN_fN^cN^b\gamma^{AF}\nb^{r}_A\varpi_b\nb^{r}_F\varpi_cd\mu_\gamma.\label{v.r.4.5}
	\end{align}
	In view of the Gauss formula, \eqref{v.r.4.1} vanishes. Due to $\dv\varpi=0$, $\eqref{v.r.4.3}+\eqref{v.r.4.4}=0$. From the H\"older inequality, we have
	\begin{align*}
	|\eqref{v.r.4.2}+\eqref{v.r.4.5}|\ls CK\norm{\nb^r\varpi}_{L^2(\Gamma\cup W)}^2.
	\end{align*}
	
	Thus, from \eqref{v.r.0} and the Cauchy inequality, we get
	\begin{align*}
	\norm{\nb^{r+1}\varpi}_{L^2(\Omega^-)}^2\ls& CK\norm{\nb^r\varpi}_{L^2(\Gamma\cup W)}^2+CK\norm{\nb^r\varpi}_{L^2(\Omega^-)}\norm{\nb^{r+1}\varpi}_{L^2(\Omega^-)}\\
	&+CK^2\norm{\nb^r\varpi}_{L^2(\Omega^-)}^2\\
	\ls &CK^2\norm{\nb^r\varpi}_{L^2(\Omega^-)}^2+CK\norm{\nb^r\varpi}_{L^2(\Omega^-)}\norm{\nb^{r+1}\varpi}_{L^2(\Omega^-)}\\
	\ls &CK^2\norm{\nb^r\varpi}_{L^2(\Omega^-)}^2+\frac{1}{2}\norm{\nb^{r+1}\varpi}_{L^2(\Omega^-)}^2,
	\end{align*}
	namely,
	\begin{align}\label{v.r1}
	\norm{\nb^{r+1}\varpi}_{L^2(\Omega^-)}^2\ls CK^2\norm{\nb^r\varpi}_{L^2(\Omega^-)}^2,
	\end{align}
	and then
	\begin{align}\label{v.r}
	\norm{\nb^r\varpi}_{L^2(\Gamma\cup W)}^2
	\ls & CK\norm{\nb^r\varpi}_{L^2(\Omega^-)}^2.
	\end{align}
	Therefore, by the induction argument, we have obtained the desired results.
\end{proof}

\begin{proposition}\label{prop.vr2}
	Suppose that for $\iota_1\gs 1/K_1$,
	\begin{align*}
	\abs{N(\bar{x}_1)-N(\bar{x}_2)}\ls \eps_1, \quad \text{whenever } |\bar{x}_1-\bar{x}_2|\ls \iota_1, \; \bar{x}_1,\bar{x}_2\in\Gamma\cup W,
	\end{align*}
	and
	\begin{align*}
	C_0^{-1}\gamma_{ab}^0(y) Z^aZ^b\ls \gamma_{ab}(t,y)Z^aZ^b\ls C_0\gamma_{ab}^0(y) Z^aZ^b, \quad \text{if } Z\in T(\Omega^-),
	\end{align*}
	where $\gamma_{ab}^0(y)=\gamma_{ab}(0,y)$. Then, it holds for any integer $r\gs 0$
	\begin{align*}
	\norm{\nb^r\varpi}_{L^\infty(\Gamma\cup W)}\ls C(r,K,K_1,\vol\Omega^-)E^{1/2}_0(0),
	\end{align*}
	and
	\begin{align*}
	\norm{\Pi \nb^r |\varpi|^2}_{L^2(\Gamma\cup W)}\ls &C(r,K,K_1,\vol\Omega^-) E_0(0).
	\end{align*}
\end{proposition}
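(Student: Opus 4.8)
The plan is to establish the $L^\infty$ bound first, by a Sobolev embedding on the two-dimensional surface $\Gamma\cup W$, and then to deduce the $L^2$ bound on $\Pi\nb^r|\varpi|^2$ from it together with Proposition \ref{prop.v.r}. The two hypotheses — the modulus-of-continuity control on the normal $N$ at scale $\iota_1\gs 1/K_1$ and the comparability of the boundary metric $\gamma(t,\cdot)$ with the fixed initial metric $\gamma(0,\cdot)$ — are exactly what is needed to make the Sobolev constant uniform in $t$, and that is where the real work lies.

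For the $L^\infty$ estimate I would invoke a surface Sobolev inequality of the form $\norm{f}_{L^\infty(\Gamma\cup W)}\ls C\sum_{s=0}^{2}\norm{\bnb^s f}_{L^2(\Gamma\cup W)}$ for tensor fields $f$ on the boundary, with a constant $C=C(K,K_1,\vol\Omega^-)$. To justify this with a controlled constant one covers $\Gamma\cup W$ by coordinate patches of radius comparable to $\iota_1$; the normal-regularity hypothesis guarantees that on each patch the surface is a graph whose slope is bounded in terms of $\eps_1$, so the induced metric is uniformly comparable to the flat one and the two-dimensional Euclidean embedding $W^{2,2}\hookrightarrow L^\infty$ applies with a uniform constant (here $kp=4>2$ leaves room to spare), while the metric-comparability assumption lets one perform the covering once, on the fixed initial surface, and transport it to time $t$. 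The number of patches is controlled through $\vol\Omega^-$ and $K_1$, and the Christoffel factors relating $\bnb$ to flat differentiation are bounded by $\theta$, hence by $K$. Applying this to $f=\nb^r\varpi$ and using $\bnb(\nb^r\varpi)=\Pi\nb^{r+1}\varpi$ — so that $\bnb^s(\nb^r\varpi)$ equals $\Pi\nb^{r+s}\varpi$ up to lower-order terms carrying factors of $\theta$ — reduces everything to the boundary norms $\norm{\nb^{r+s}\varpi}_{L^2(\Gamma\cup W)}$ for $0\ls s\ls 2$. These are bounded by Proposition \ref{prop.v.r} by $CK^{(2(r+s)+1)/2}E_0(0)^{1/2}$, and absorbing all the powers of $K$ into the constant yields the claimed $L^\infty$ bound.

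For the second estimate I would expand by the Leibniz rule
\begin{align*}
\nb^r|\varpi|^2=\nb^r(g^{ab}\varpi_a\varpi_b)=\sum_{s=0}^{r}\bic_r^s\,(\nb^s\varpi)\cdot(\nb^{r-s}\varpi),
\end{align*}
where the dot denotes the appropriate metric contraction, apply $\Pi$, and take the $L^2(\Gamma\cup W)$ norm. In each summand I would place the factor of lower differentiation order in $L^\infty(\Gamma\cup W)$, via the $L^\infty$ bound just proved, and the other factor in $L^2(\Gamma\cup W)$, via Proposition \ref{prop.v.r}. Every product is then of the shape $\big(C(K,K_1,\vol\Omega^-)E_0(0)^{1/2}\big)\cdot\big(CK^{\,\cdots}E_0(0)^{1/2}\big)$, i.e.\ a constant times $E_0(0)$, which is precisely the asserted bound after summing the finitely many terms.

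The main obstacle is the surface Sobolev inequality of the second paragraph with a $t$-uniform constant: one must convert the pointwise normal-regularity and metric-comparability hypotheses into a genuine covering of the moving surface by uniformly good charts, bound the chart count by $\vol\Omega^-$ and $K_1$, and track the $\theta$-factors produced when passing between $\bnb$, $\Pi\nb$, and flat derivatives. Once this geometric input is secured, both estimates follow by routine, interpolation-free Hölder arguments.
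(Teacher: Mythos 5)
Your proposal is correct and follows essentially the same route as the paper: an $L^\infty$ bound on $\nb^r\varpi$ over $\Gamma\cup W$ via a boundary Sobolev embedding whose constant is controlled by the two stated hypotheses, followed by a Leibniz--H\"older argument for $\Pi\nb^r|\varpi|^2$ with the lower-order factor in $L^\infty$ and the other in $L^2$, everything ultimately fed by Proposition \ref{prop.v.r}. The only difference is cosmetic: the paper gets the $L^\infty$ bound by combining \eqref{A.32} (with $k=1$, $p=4$) and the trace inequality \eqref{CLA.7.1} so as to land on the \emph{interior} $L^2$ norms from Proposition \ref{prop.v.r}, whereas you use the $W^{2,2}(\Gamma)\hookrightarrow L^\infty$ case of the same embedding together with the \emph{boundary} $L^2$ norms; and the chart-covering construction you flag as the main obstacle is precisely the content of the already-cited Lemma \ref{lem.CLA.2}, so it does not need to be redone.
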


\begin{proof}
	From \eqref{A.32} and \eqref{CLA.7.1}, it follows
	\begin{align*}
	\norm{\nb^s\varpi}_{L^\infty(\Gamma\cup W)}\ls &C\norm{\nb^{s+1}\varpi}_{L^4(\Gamma\cup W)}+C(K_1)\norm{\nb^s\varpi}_{L^4(\Gamma\cup W)}\\
	\ls &C(K,K_1,\vol\Omega^-)\sum_{\ell=0}^{2}\norm{\nb^{s+\ell}\varpi}_{L^2(\Omega^-)}\\
	\ls &C(K,K_1,\vol\Omega^-)\sum_{\ell=0}^{2}K^{s+\ell}E^{1/2}_0(0)\\
	\ls &C(s,K,K_1,\vol\Omega^-)E^{1/2}_0(0).
	\end{align*}
	By the H\"older inequality, Proposition~\ref{prop.v.r}, \eqref{A.32},  and the Cauchy inequality, we have
	\begin{align*}
	&\norm{\Pi \nb^r |\varpi|^2}_{L^2(\Gamma\cup W)}\ls C\sum_{m=0}^{[r/2]} \norm{\nb^{m}\varpi}_{L^\infty(\Gamma\cup W)}\norm{\nb^{r-m}\varpi}_{L^2(\Gamma\cup W)}\\
	\ls &C\sum_{m=0}^{[r/2]}(\norm{\nb^{m+1}\varpi}_{L^4(\Gamma\cup W)}+C(K_1)\norm{\nb^{m}\varpi}_{L^4(\Gamma\cup W)})K^{r-m+1/2}E^{1/2}(0)\\
	\ls &C(K_1,\vol\Omega^-)E^{1/2}(0)\sum_{m=0}^{[r/2]}\sum_{\ell=0}^{2}\norm{\nb^{m+\ell}\varpi}_{L^2(\Omega^-)}K^{r-m+1/2}\\
	\ls &C(K_1,\vol\Omega^-)\sum_{m=0}^{[r/2]}\sum_{\ell=0}^{2}K^{m+\ell}K^{r-m+1/2}E_0(0)\\
	\ls &C(r,K,K_1,\vol\Omega^-)E_0(0).
	\end{align*}	
\end{proof}

\section{The estimates of the electric field in vacuum}\label{sec.4}

Although the electric field can be regarded as a secondary variable due to $$\nb\times \hat{\el}=-\hat{\vH}_t, \text{ and } \dv\hat{\el}=0,$$ 
we have to use the estimates of the electric field in vacuum in order to get the energy estimates.  In fact, we can prove the following estimates.

\begin{proposition}\label{prop.E.r}
	If $|\theta|+1/\iota_0\ls K$ on $\Gamma\cup W$ and $\nb\times\varXi_0\in L^2(\Omega^-)$, then it holds for any integer $r\gs 0$ 
	\begin{align*}
	&\norm{\nb^r (\nabla\times\varXi)}_{L^2(\Omega^-)}^2+\norm{\nb^r (\nabla\times\varXi)}_{L^2(\Gamma\cup W)}^2\no\\
	\ls&C(r,K)\left[\norm{\nb\times\varXi_0}_{L^2(\Omega^-)}^2+ E_0(0)\sup_{\tau\in [0,t]}\sum_{\ell=0}^{2}\norm{\nb^\ell u(\tau)}_{L^2(\Gamma)}^2\right]\no\\
	&\qquad\qquad\qquad\cdot \exp\left(C(K)\int_0^t\norm{u_N(\tau)}_{L^\infty(\Gamma)}d\tau\right),
	\end{align*}
	where $\varXi_0$ is the initial datum of $\varXi$ at $t=0$.
\end{proposition}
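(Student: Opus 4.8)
I will set $\omega=\curl\varXi$ and base everything on the two structural facts that $\omega$ is divergence- and curl-free, together with an $L^2$ energy identity that exploits the interface condition \eqref{mhd41.5}. First, $\dv\omega=\dv\curl\varXi=0$ is automatic. Next, $\curl\omega=\curl\curl\varXi=0$: the Maxwell constraint $\curl\hat\vH=0$ (equivalently $\curl\varpi=0$ in $\Omega^-$) is propagated by the dynamics, so $\curl\omega=-\curl\hat\vH_t=-\partial_t\curl\hat\vH=0$; in Lagrangian variables this is seen by applying $D_t$ to $\curl\varpi=0$ and substituting \eqref{mhd41.7}, the commutator $[D_t,\nb]$ dropping out against $\eps^{bcd}$. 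Because $\omega$ thus satisfies exactly the same div-curl system as $\varpi$, the elliptic reduction used in Proposition~\ref{prop.v.r} (Gauss' formula, the pointwise div-curl identity, and the trace inequality, all of which use only $\dv\omega=\curl\omega=0$) reduces $\norm{\nb^{r}\omega}_{L^2(\Omega^-)}$ and $\norm{\nb^{r}\omega}_{L^2(\Gamma\cup W)}$ to the base quantity $\norm{\omega}_{L^2(\Omega^-)}$, up to factors $C(r,K)$ and the boundary trace of $\omega$. Hence the whole estimate rests on an $L^2$ energy bound for $\omega$.

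\textbf{The energy identity.} Working in Lagrangian coordinates, where $\Omega^-$ is fixed and $D_t\,d\mu_g=(\tr h)\,d\mu_g=0$ by incompressibility, I would compute
\begin{align*}
\frac{d}{dt}\int_{\Omega^-}|\omega|^2\,d\mu_g=-2\int_{\Omega^-}h^{ab}\omega_a\omega_b\,d\mu_g+2\int_{\Omega^-}\omega^bD_t\omega_b\,d\mu_g.
\end{align*}
For the principal term I would write $\omega^b=\eps^{bcd}\nb_c\varXi_d$ and integrate by parts in $c$; the interior remainder is $\int_{\Omega^-}\eps^{bcd}\varXi_d\nb_cD_t\omega_b\,d\mu_g=\int_{\Omega^-}\varXi\cdot\curl(D_t\omega)\,d\mu_g$, which vanishes because $D_t(\curl\omega)=0$ and the commutator $[\nb_c,D_t]\omega_b$ is symmetric against $\eps^{bcd}$ (Lemma~\ref{lem.CL2.4}). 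This leaves the clean identity
\begin{align*}
\frac{d}{dt}\int_{\Omega^-}|\omega|^2\,d\mu_g=-2\int_{\Omega^-}h^{ab}\omega_a\omega_b\,d\mu_g+2\oint_{\Gamma\cup W}(N\times\varXi)\cdot D_t\omega\,d\mu_\gamma .
\end{align*}
The interior term is bounded by $C\norm{\nb u}_{L^\infty(\Omega^-)}\int_{\Omega^-}|\omega|^2$, which for the virtual velocity is controlled by $C(K)\norm{u_N}_{L^\infty(\Gamma)}\int_{\Omega^-}|\omega|^2$, exactly the Gr\"onwall coefficient.

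\textbf{Boundary analysis and Gr\"onwall.} On $W$ the integrand vanishes since $\varXi\times N=0$ by \eqref{mhd41.6}. On $\Gamma$ I would substitute $N\times\varXi=u_N\varpi$ from \eqref{mhd41.5} and use $D_t\varpi_b=-\omega_b+(\text{terms in }u,\varpi)$ from \eqref{mhd41.7} to rewrite $\varpi^bD_t\omega_b=D_t(\varpi\cdot\omega)+|\omega|^2+(\text{l.o.t.})$. The piece $2\oint_\Gamma u_N|\omega|^2\,d\mu_\gamma$ is absorbed into the Gr\"onwall term via the trace estimate $\norm{\omega}_{L^2(\Gamma)}^2\ls C(K)\norm{\omega}_{L^2(\Omega^-)}^2$; the $D_t$-exact piece $2\oint_\Gamma u_N\,D_t(\varpi\cdot\omega)$ is integrated in time, producing the endpoint terms $\oint_\Gamma u_N\,\varpi\cdot\omega$ at $\tau=0,t$ together with factors of $D_tu_N$ and the tangential divergence of $u$ from $D_t\,d\mu_\gamma$. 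Bounding $\varpi$-norms on $\Gamma$ by $E_0(0)^{1/2}$ (Proposition~\ref{prop.v.r}) and the $u$-factors by $\sum_{\ell=0}^2\norm{\nb^\ell u}_{L^2(\Gamma)}$, then applying Cauchy's inequality to absorb $\norm{\omega}_{L^2}$ into the left side, Gr\"onwall yields
\begin{align*}
\int_{\Omega^-}|\omega(t)|^2\,d\mu_g\ls\Big[\norm{\nb\times\varXi_0}_{L^2(\Omega^-)}^2+E_0(0)\sup_{\tau\in[0,t]}\sum_{\ell=0}^2\norm{\nb^\ell u(\tau)}_{L^2(\Gamma)}^2\Big]\exp\Big(C(K)\int_0^t\norm{u_N}_{L^\infty(\Gamma)}\,d\tau\Big),
\end{align*}
and feeding this into the div-curl reduction of the first paragraph gives the stated bound for all $r$.

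\textbf{Main obstacle.} The delicate step is the $\Gamma$-boundary term $\oint_\Gamma(N\times\varXi)\cdot D_t\omega$: one must handle $D_t\omega$ on the moving interface without an independent transport equation for $\omega$ (the relation \eqref{mhd41.7} is only an identity for $\curl\varXi$), which forces the time-integration-by-parts trick above and careful tracking of the evolving surface measure $d\mu_\gamma$ and the normal $N$ (controlled through $\theta$ and $K$). A companion subtlety is that, unlike $\varpi$, the field $\omega$ has nonzero normal trace on $\Gamma$; controlling $\omega_N|_\Gamma$ through the surface curl of $N\times\varXi=u_N\varpi$ is what limits the loss to two tangential derivatives of $u$, accounting for the range $0\le\ell\le2$, and keeping every constant of the form $C(r,K)$ with the exponent depending only on $\int_0^t\norm{u_N}_{L^\infty(\Gamma)}\,d\tau$ is the principal bookkeeping difficulty.
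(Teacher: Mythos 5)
Your elliptic half of the argument is essentially the paper's: since $\dv(\curl\varXi)=0$ and $\curl(\curl\varXi)=0$, the div--curl machinery of Proposition \ref{prop.v.r} bootstraps $\nb^r(\curl\varXi)$ down to the $L^2(\Omega^-)$ norm plus one tangential derivative of the (nonzero) Neumann trace $N^a(\curl\varXi)_a$ on $\Gamma$, and you correctly identify that this trace, computed from $N\times\varXi=u_N\varpi$, is what produces the $E_0(0)\sum_{\ell\ls 2}\norm{\nb^\ell u}^2_{L^2(\Gamma)}$ term. The gap is in the $L^2$-in-time estimate. You assert there is ``no independent transport equation for $\omega$'' and therefore resort to a spatial integration by parts producing the boundary term $\oint_\Gamma(N\times\varXi)\cdot D_t\omega\,d\mu_\gamma$, which you then try to tame by a time integration by parts. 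But the whole point of the paper's argument is that such a transport equation \emph{does} exist: because $\hat{\el}_t=0$ in vacuum, the field $B=\curl\varXi$ is the Lagrangian pullback of the time-independent vector $\curl\hat{\el}$, and exactly the same computation that yields \eqref{mhd41.7} gives the source-free law $D_tB_a=u^b\nb_bB_a+\nb_au^bB_b$ (equation \eqref{DtB}). With this, the metric term $-2h^{ab}B_aB_b$ cancels identically against the stretching term $2B^a\nb_au^bB_b$, leaving $D_t|B|^2=u^b\nb_b|B|^2$ and hence the clean flux identity $\frac{d}{dt}\int_{\Omega^-}|B|^2d\mu_g=-\int_\Gamma u_N|B|^2d\mu_\gamma$, which closes by the trace inequality and Gr\"onwall with coefficient $C(K)\norm{u_N}_{L^\infty(\Gamma)}$.

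Without that cancellation your scheme does not deliver the stated bound for two concrete reasons. First, your interior term $-2\int_{\Omega^-}h^{ab}\omega_a\omega_b\,d\mu_g$ is bounded by $\norm{\nb u}_{L^\infty(\Omega^-)}\int|\omega|^2$, and the claim that the virtual velocity satisfies $\norm{\nb u}_{L^\infty(\Omega^-)}\ls C(K)\norm{u_N}_{L^\infty(\Gamma)}$ is unjustified: the extension in Section \ref{sec.2} controls $|\vv|$ in $\Omega^-$ by $\norm{\vv}_{L^\infty(\Gamma_t)}$, not its gradient by the normal component alone, so your Gr\"onwall exponent would not have the form $C(K)\int_0^t\norm{u_N}_{L^\infty(\Gamma)}d\tau$. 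Second, your time integration by parts of $\oint_\Gamma u_N\,D_t(\varpi\cdot\omega)$ generates $D_tu_N$ on $\Gamma$, which by the momentum equation involves $\nb q^+$ and hence quantities not present in the right-hand side of the proposition; the estimate as stated cannot absorb them. Both problems evaporate once you use \eqref{DtB}, so the fix is to prove that identity (it is a two-line variant of the derivation of \eqref{mhd41.7}) rather than to work around its supposed absence.
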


\begin{proof}
	For convenience, we denote $B=\nabla\times\varXi$ in this section. Then, in Lagrangian coordinates, we have from \eqref{mhd41.7} that
	\begin{align*}
	B_a=-D_t\varpi_a+u^b\nb_b\varpi_a+\varpi_b\nb_a u^b.
	\end{align*}
	From \eqref{CL3.9.1}, we have on boundaries $\Gamma\cup W$
	\begin{align*}
	N^aB_a=&-D_t(N^a\varpi_a)+D_tN^a\varpi_a+N^au^b\nb_b\varpi_a+N^a\varpi_b\nb_a u^b\\
	=&-2h^a_dN^d\varpi_a-h_{NN}N^a\varpi_a+N^au^b\nb_b\varpi_a+N^a\varpi_b\nb_a u^b\\
	=&-(\nb_au_d+\nb_du_a)N^d\varpi^a+N^au^b\nb_b\varpi_a+N^a\varpi_b\nb_a u^b\\
	=&-\nb_au_dN^d\varpi^a+N^du^a\nb_a\varpi_d.
	\end{align*}
	By \eqref{CL2.4.2} and the fact $\nb_c\varpi^c=0$, we also get in $\Omega^-$
	\begin{align*}
	\nb_a B^a=&g^{ac}\nb_cB_a=-g^{ac}\nb_cD_t\varpi_a+\nb_c(u^b\nb_b\varpi^c)+g^{ac}\nb_c(\varpi_b\nb_a u^b)\\
	=&[D_t,g^{ac}\nb_c]\varpi_a-D_t(\nb_c\varpi^c)+\nb_cu^b\nb_b\varpi^c+g^{ac}\nb_c\varpi_b\nb_a u^b+\varpi_b\Delta u^b\\
	=&-2h^{ac}\nb_c\varpi_a-(\Delta u^e)\varpi_e+2h^{ac}\nb_c\varpi_a+\varpi_b\Delta u^b\\
	=&0,
	\end{align*}
	and by \eqref{CL2.4.1} and the fact $\nb\times\varpi=0$,
	\begin{align*}
	(\nb\times B)^c=&\eps^{cea}\nb_eB_a=-\eps^{cea}\nb_eD_t\varpi_a+\eps^{cea}\nb_e(u^b\nb_b\varpi_a)+\eps^{cea}\nb_e(\varpi_b\nb_a u^b)\\
	=&\eps^{cea}[D_t,\nb_e]\varpi_a-\eps^{cea}D_t\nb_e\varpi_a+\eps^{cea}\nb_eu^b\nb_b\varpi_a+\eps^{cea}\nb_e\varpi_b\nb_a u^b\\
	=&-\eps^{cea}\nb_a\nb_e u^d\varpi_d+\eps^{cea}\nb_eu^b\nb_b\varpi_a-\eps^{cea}\nb_a\varpi_b\nb_e u^b\\
	=&0.
	\end{align*}
	Thus, we have $\nb_aB^a=0$ and $\nb\times B=0$, which yields $B(t,y)=\nb \psi(t,y)$, and $\psi$ is a solution of the following Neumann problem
	\begin{numcases}{}
	\Delta \psi=0, \quad\;\; \text{in } \Omega^-,\label{psi.1}\\
	\nb_N\psi =f, \quad \text{on } \Gamma,\label{psi.2}\\
	\nb_N\psi=0, \quad \text{on } W,\label{psi.3}
	\end{numcases}
	where $f=N^du^a\nb_a\varpi_d-\nb_au_dN^d\varpi^a$.
	
	From \eqref{psi.1}-\eqref{psi.3} and H\"older's inequality, we get
	\begin{align*}
	&\norm{\bnb B}_{L^2(\Omega^-)}^2=\int_{\Omega^-} g^{ab}\gamma^{cd}\nb_c\nb_a\psi\nb_d\nb_b\psi d\mu_g\\
	=&\int_{\Omega^-}g^{ab}\nb_a(\gamma^{cd}\nb_c\psi\nb_d\nb_b\psi) d\mu_g-\int_{\Omega^-}g^{ab}\nb_a\gamma^{cd}\nb_c\psi\nb_d\nb_b\psi d\mu_g\\
	=&-\int_{\Gamma\cup W}N^b\gamma^{cd}\nb_c\psi\nb_d\nb_b\psi d\mu_\gamma-\int_{\Omega^-}g^{ab}\nb_a\gamma^{cd}\nb_c\psi\nb_d\nb_b\psi d\mu_g\\
	=&-\int_{\Gamma\cup W}\gamma^{cd}\nb_c\psi\nb_d(N^b\nb_b\psi) d\mu_\gamma+\int_{\Gamma\cup W}\gamma^{cd}\nb_c\psi\nb_d N^b\nb_b\psi d\mu_\gamma\\
	&-\int_{\Omega^-}g^{ab}\nb_a\gamma^{cd}\nb_c\psi\nb_d\nb_b\psi d\mu_g\\
	\ls &C\norm{\Pi B}_{L^2(\Gamma)}\norm{\bnb f}_{L^2(\Gamma)}+C\norm{\theta}_{L^\infty(\Gamma\cup W)}\norm{B}_{L^2(\Gamma\cup W)}^2+CK\norm{B}_{L^2(\Omega^-)}\norm{\nb B}_{L^2(\Omega^-)}.
	\end{align*}
	Similar to \eqref{v0.1}, we have for any $\delta>0$
	\begin{align}
	\norm{B}_{L^2(\Gamma\cup W)}^2
	\ls &C(K+1/\delta)\norm{B}_{L^2(\Omega^-)}^2
	+\delta\norm{\nb B}_{L^2(\Omega^-)}^2.\label{Bb}
	\end{align}
	Thus, in view of \eqref{divcurl} and Cauchy's inequality, it follows that
	\begin{align*}
	\norm{\nb B}_{L^2(\Omega^-)}^2\ls &C\norm{\bnb B}_{L^2(\Omega^-)}^2\\
	\ls& C\norm{\bnb f}_{L^2(\Gamma)}^2+C(K+1)(K+1/\delta)\norm{B}_{L^2(\Omega^-)}^2
	\\
	&+C(K+1)\delta\norm{\nb B}_{L^2(\Omega^-)}^2+CK^2\norm{B}_{L^2(\Omega^-)}^2+\frac{1}{4}\norm{\nb B}_{L^2(\Omega^-)}^2.
	\end{align*}
	Taken $\delta$ so small that $C(K+1)\delta<1/4$, it yields
	\begin{align}
	\norm{\nb B}_{L^2(\Omega^-)}^2\ls &C\norm{\bnb f}_{L^2(\Gamma)}^2+C(K)\norm{B}_{L^2(\Omega^-)}^2.\label{nB}
	\end{align}
	
	Similar to the derivation of \eqref{mhd41.7}, we can get, due to $\hat{\el}_t=0$, that
	\begin{align}
	D_t B_a=u^b\nb_b B_a+\nb_a u^b B_b. \label{DtB}
	\end{align}
	It follows that
	\begin{align*}
	\frac{d}{dt}\int_{\Omega^-}|B|^2d\mu_g=-\int_{\Gamma} u_N |B|^2d\mu_\gamma,
	\end{align*}
	which yields
	\begin{align}
	\norm{B(t)}_{L^2(\Omega^-)}^2\ls \norm{B_0}_{L^2(\Omega^-)}^2+\int_0^t\norm{u_N(\tau)}_{L^\infty(\Gamma)}\norm{B(\tau)}_{L^2(\Gamma)}^2d\tau,\label{BL2}
	\end{align}
	where $B_0=B(t)|_{t=0}$. From  \eqref{nB}, \eqref{BL2} and \eqref{Bb}, we obtain
	\begin{align*}
	&\norm{B(t)}_{L^2(\Omega^-)}^2+\norm{\nb B}_{L^2(\Omega^-)}^2\\
	\ls &C\norm{\bnb f}_{L^2(\Gamma)}^2+C(K)\norm{B_0}_{L^2(\Omega^-)}^2+C(K)\int_0^t\norm{u_N(\tau)}_{L^\infty(\Gamma)}\norm{B(\tau)}_{L^2(\Gamma)}^2d\tau\\
	\ls &C\norm{\bnb f}_{L^2(\Gamma)}^2+C(K)\norm{B_0}_{L^2(\Omega^-)}^2\\
	&+C(K)\int_0^t\norm{u_N(\tau)}_{L^\infty(\Gamma)}\big(\norm{B(\tau)}_{L^2(\Omega^-)}^2
	+\norm{\nb B(\tau)}_{L^2(\Omega^-)}^2\big)d\tau,
	\end{align*}
	which implies, by Gr\"onwall's inequality, that
	\begin{align*}
	&\norm{B(t)}_{L^2(\Omega^-)}^2+\norm{\nb B}_{L^2(\Omega^-)}^2\no\\
	\ls&\left[ C\sup_{\tau\in [0,t]}\norm{\bnb f(\tau)}_{L^2(\Gamma)}^2+C(K)\norm{B_0}_{L^2(\Omega^-)}^2\right]\exp\left(C(K)\int_0^t\norm{u_N(\tau)}_{L^\infty(\Gamma)}d\tau\right).
	\end{align*}
	
	By the definition of $f$ and Proposition \ref{prop.v.r}, we have
	\begin{align*}
	\norm{\bnb f}_{L^2(\Gamma)}\ls &\norm{\bnb (N^du^a\nb_a\varpi_d-\nb_au_dN^d\varpi^a)}_{L^2(\Gamma)}\\
	\ls &\norm{\theta}_{L^\infty(\Gamma)}\norm{u}_{L^2(\Gamma)}\norm{\nb\varpi}_{L^2(\Gamma)}+\norm{\nb u}_{L^2(\Gamma)}\norm{\nb\varpi}_{L^2(\Gamma)}\\
	&+\norm{ u}_{L^2(\Gamma)}\norm{\nb^2\varpi}_{L^2(\Gamma)}+\norm{ \nb^2 u}_{L^2(\Gamma)}\norm{\varpi}_{L^2(\Gamma)}\\
	&+\norm{\theta}_{L^\infty(\Gamma)}\norm{\nb u}_{L^2(\Gamma)}\norm{\varpi}_{L^2(\Gamma)}\\
	\ls &C(K)\left[\norm{u}_{L^2(\Gamma)}+\norm{\nb u}_{L^2(\Gamma)}+\norm{ \nb^2 u}_{L^2(\Gamma)}\right]E^{1/2}(0)\\
	\ls &C(K)E^{1/2}(0)\sum_{\ell=0}^{2}\norm{\nb^\ell u}_{L^2(\Gamma)}.
	\end{align*}
	Therefore, we obtain,combining with \eqref{Bb}, that
	\begin{align*}
	&\norm{B}_{L^2(\Omega^-)}^2+\norm{\nb B}_{L^2(\Omega^-)}^2+\norm{B}_{L^2(\Gamma\cup W)}^2\no\\
	\ls&C(K)\left[\norm{B_0}_{L^2(\Omega^-)}^2+ E_0(0)\sup_{\tau\in [0,t]}\sum_{\ell=0}^{2}\norm{\nb^\ell u(\tau)}_{L^2(\Gamma)}^2\right]\no\\
	&\qquad\qquad\qquad\cdot \exp\left(C(K)\int_0^t\norm{u_N(\tau)}_{L^\infty(\Gamma)}d\tau\right).
	\end{align*}
	
	Since $\nb_a B^a=0$ and $\nb\times B=0$, it is similar to $\varpi$.  One can verify that the lines between \eqref{v.r.0} and \eqref{v.r} also hold if $\varpi$ is replaced by $B$ everywhere in those lines. Thus, we can obtain for any $r\gs 1$
	\begin{align}\label{B.r1}
	\norm{\nb^{r+1}B}_{L^2(\Omega^-)}^2\ls CK^2\norm{\nb^rB}_{L^2(\Omega^-)}^2,
	\end{align}
	and 
	\begin{align}\label{B.r}
	\norm{\nb^rB}_{L^2(\Gamma\cup W)}^2
	\ls & CK\norm{\nb^rB}_{L^2(\Omega^-)}^2.
	\end{align}
	Hence, we get for any $r\gs 0$
	\begin{align*}
	&\norm{\nb^r B}_{L^2(\Omega^-)}^2+\norm{\nb^r B}_{L^2(\Gamma\cup W)}^2\no\\
	\ls&C(K,\vol\Omega^+)\left[\norm{B_0}_{L^2(\Omega^-)}^2+ E_0(0)\sup_{\tau\in [0,t]}\sum_{\ell=0}^{3}\norm{\nb^\ell u(\tau)}_{L^2(\Omega^+)}^2\right]\no\\
	&\qquad\qquad\qquad\cdot \exp\left(C(K)\int_0^t\norm{u_N(\tau)}_{L^\infty(\Gamma)}d\tau\right).
	\end{align*}
	Changing $B$ back to $\nb\times\varXi$, we obtain the desired results.
\end{proof}

\section{The General $r$-th Order Energy Estimates}\label{sec.5}

We can get   that for $r\gs 1$ (cf. \cite{HLarma})
\begin{align}\label{r.u}
&D_t\nb^r u_a+\nb^r\nb_a q^+ \no\\
=&(\nb_au_c-\sgn(r-1)\nb_cu_a)\nb^r u^c+\mu\beta^c\nb_c\nb^r\beta_a+r\mu\nb\beta\cdot\nb^r\beta_a\no\\
&+\sgn(r-1)\mu\nb^r\beta^c\nb_c\beta_a
+\sgn((r-1)(r-2))\mathcal{P}_a(\beta),
\end{align}
and
\begin{align}\label{nb.beta}
D_t\nb^r \beta_a=&(\nb_au_c+\sgn(r-1)\nb_c u_a)\nb^r\beta^c-\sgn(r-1)\nb^r u^c\nb_c\beta_a\no\\
&+\beta^c\nb_c\nb^r u_a+r\nb\beta\cdot\nb^r u_a+\sgn((r-1)(r-2))\mathcal{Q}_a.
\end{align}
where 
\begin{align*}
\mathcal{P}_a(\beta):=-\sum_{s=2}^{r-1}\bic_r^s \nb^su^c\nb^{r-s} \nb_cu_a+\mu\sum_{s=2}^{r-1}\bic_r^s \nb^s\beta^c\nb^{r-s} \nb_c\beta_a,
\end{align*}
and
\begin{align*}
\mathcal{Q}_a:=-\sum_{s=2}^{r-1}\bic_r^s \nb^su^c\nb^{r-s} \nb_c \beta_a+\sum_{s=2}^{r-1}\bic_r^s \nb^s\beta^c\nb^{r-s} \nb_c u_a.
\end{align*}

Define the $r$-th order energy as
\begin{align*}
E_r(t)=&\int_{\Omega^+} g^{bd}\gamma^{af}\gamma^{AF}\nb_A^{r-1}\nb_a u_b\nb_F^{r-1}\nb_f u_d d\mu_g\\
&+\mu \int_{\Omega^+} g^{bd}\gamma^{af}\gamma^{AF}\nb_A^{r-1}\nb_a \beta_b\nb_F^{r-1}\nb_f \beta_d d\mu_g\\
&+\int_{\Omega^+} |\nb^{r-1}\curl u|^2 d\mu_g+\mu \int_{\Omega^+} |\nb^{r-1}\curl \beta|^2 d\mu_g\\
&+\int_{\Gamma} \gamma^{af}\gamma^{AF}\nb_A^{r-1}\nb_a P\nb_F^{r-1}\nb_f P\, \vartheta d\mu_\gamma,
\end{align*}
where $\vartheta=-1/\nb_N P$ as before.

\begin{theorem}\label{thm.renergy}
	Let $1\ls r\ls 4$ be an integer, then there exists a $T>0$ such that the following holds: For any smooth solution of MHD \eqref{mhd41}  satisfying
	\begin{align}
	|\beta|\ls M_1 \quad\text{for } r=& 2,&&\text{in } [0,T]\times \Omega^+,\label{2energy81}\\
	|\nb P|+|\nb u|+|\nb \beta|\ls &M,  &&\text{in } [0,T]\times \Omega,\label{2energy8}\\
	|\theta|+1/\iota_0\ls &K,&&\text{on } [0,T]\times \Gamma,\label{2energy9}\\
	-\nb_N P\gs \eps>&0, &&\text{on } [0,T]\times \Gamma,\label{2energy91}\\
	|u|+|\nb^2 P|+|\nb_ND_t P|\ls& L,&&\text{on } [0,T]\times \Gamma,\label{2energy92}
	\end{align}
	we have, for $t\in[0,T]$,
	\begin{align}\label{renergy}
	E_r(t)\ls e^{C_1t}E_r(0)+C_2e^{C_3t}\left(e^{C_4t}-1\right),
	\end{align}
	where the constant $C_i>0$ depends on $K$, $K_1$, $M$, $M_1$, $L$, $1/\eps$, $\vol\Omega$,  $E_0(0)$, $E_1(0)$, $\cdots$, and $E_{r-1}(0)$; both $C_2$ and $C_3$ also depend on $\norm{\curl \varXi_0}_{L^2(\Omega^-)}$ if $r\gs 3$; $C_3=0$ for $r=1,2$.
\end{theorem}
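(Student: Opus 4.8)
The plan is to differentiate $E_r(t)$ in time and derive a differential inequality of the form $\frac{d}{dt}E_r\ls C_1 E_r + S(t)$, then close the estimate by Gr\"onwall's inequality. First I would compute $D_t E_r$ by letting the material derivative fall on each factor of each of the five integrals. When $D_t$ hits the volume element $d\mu_g$, the metric factors $g^{bd}$, or the projections $\gamma^{af},\gamma^{AF}$, Lemma~\ref{lem.CL2.1} shows it produces a factor of $h=\frac12 D_t g$, i.e.\ of $\nb u$, which under the a priori bound \eqref{2energy8} is controlled and gives a term absorbed into $C_1 E_r$. The genuinely nontrivial contributions come from $D_t$ acting on the top-order tensors $\nb^r u$, $\nb^r\beta$ and $\Pi\nb^r P$, for which I substitute the evolution equations \eqref{r.u}, \eqref{nb.beta} and the transport equation for $P$.

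For the velocity integral, \eqref{r.u} gives $D_t\nb^r u_a=-\nb^r\nb_a q^+ + (\text{terms involving }\nb u\cdot\nb^r u)+\mu\beta^c\nb_c\nb^r\beta_a+(\text{lower order})$, so the only top-order piece is $-\nb^r\nb_a q^+$. Contracting against $\Pi\nb^r u$ and integrating by parts moves one derivative off $q^+$; since $\nb_a u^a=0$ the resulting interior term is lower order, leaving a boundary integral over $\Gamma\cup W$ involving $\nb^r q^+$. On $W$ the contribution is controlled because $u=0$ there, and on $\Gamma$ I would use the pressure balance $P=0$ from \eqref{mhd41.4} (so $q^+=q^-$ and $\bnb P=0$ on $\Gamma$) together with the structural identity $(\Pi\nb^2 P)_{ab}=\theta_{ab}\nb_N P$ to rewrite the boundary term in terms of $\Pi\nb^r P$, the second fundamental form, and $\nb_N P$.

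The decisive cancellations are twofold. The first is the MHD coupling: the Lorentz term $\mu\beta^c\nb_c\nb^r\beta_a$ in the velocity energy is paired with the induction term $\mu\beta^c\nb_c\nb^r u_a$ in the magnetic energy, and integrating one of them by parts along $\beta^c\nb_c$, which is tangential to $\Gamma$ because $\beta_a N^a=0$ by \eqref{mhd41.4}, produces no boundary term and an interior term that cancels its counterpart up to factors of $\nb\beta$ absorbed into $C_1 E_r$. The second cancellation is that the boundary integral coming from $-\nb^r\nb_a q^+$ must match the leading term obtained by applying $D_t$ to the pressure energy $\int_\Gamma|\Pi\nb^r P|^2\vartheta\,d\mu_\gamma$: using $\vartheta=-1/\nb_N P$, the identity for $D_t d\mu_\gamma$, and the transport equation for $P$, the top-order part of this boundary integral is designed to cancel the velocity boundary term, so that only terms controllable by $\eps^{-1}$, $K$, $L$ (through \eqref{2energy91}, \eqref{2energy9}, \eqref{2energy92}) and the lower-order energies survive. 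The curl integrals are treated separately: $\curl u$ and $\curl\beta$ satisfy evolution equations, so $D_t$ of $\int|\nb^{r-1}\curl u|^2$ and $\int|\nb^{r-1}\curl\beta|^2$ yields only commutator and $\nb u$ terms bounded by $C_1 E_r$.

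Finally I would bound the source $S(t)$. The pressure $q^+$ is recovered from the Neumann problem obtained by taking the divergence of \eqref{mhd41.1}, so $\norm{\nb^r q^+}$ is estimated by the interior energies via elliptic regularity, while the vacuum part $q^-=\frac\mu2|\hat H|^2$ and its boundary derivatives on $\Gamma$ are controlled by Propositions~\ref{prop.v.r} and \ref{prop.vr2}; for $r\gs 3$ the term $\nb_N D_t P$ forces the appearance of $\curl\varXi$ on $\Gamma$, estimated by Proposition~\ref{prop.E.r}, which is exactly why $C_2,C_3$ depend on $\norm{\curl\varXi_0}_{L^2(\Omega^-)}$ and $C_3=0$ for $r=1,2$. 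Assembling these gives $\frac{d}{dt}E_r\ls C_1 E_r+S(t)$, where $S$ is essentially bounded for $r=1,2$ and grows like $e^{C_3 t}$ through the electric-field bound for $r\gs 3$, and integration via Gr\"onwall's inequality yields \eqref{renergy}. I expect the main obstacle to be the bookkeeping of the two top-order cancellations---verifying that the $q^+$-boundary integral and the $D_t$ of the pressure energy cancel to leading order, and that the Lorentz/induction pairing closes---since any uncancelled term would carry $r+1$ derivatives and could not be absorbed into $E_r$.
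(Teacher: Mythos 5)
Your proposal is correct and follows essentially the same route as the paper: differentiating $E_r$, substituting the evolution equations, exploiting the vanishing of the Lorentz/induction boundary term via $\beta_a N^a=0$ and the cancellation of the $\nb^r q^+$ boundary integral against $D_t$ of the pressure energy (reducing to $\nb^r D_t P$ via the commutator identity), then closing with the elliptic estimate for $q^+$, Propositions~\ref{prop.v.r}--\ref{prop.E.r} for the vacuum quantities, and Gr\"onwall. The only minor imprecision is that the velocity integral is over $\Omega^+$, whose boundary is $\Gamma$ alone (not $\Gamma\cup W$), so no separate argument on $W$ is needed there; otherwise your identification of where $\norm{\curl\varXi_0}_{L^2(\Omega^-)}$ enters (through $D_tq^-=-\mu\varpi\cdot(\nb\times\varXi)+\cdots$ in the estimate of $\Pi\nb^rD_tP$ for $r\gs 3$) matches the paper.
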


\begin{proof} Since $\tr h=0$, we have
	\begin{align}
	&\frac{d}{dt}E_r(t)=\no\\
	&\quad\int_{\Omega^+} D_t\left(g^{bd}\gamma^{af}\gamma^{AF}\nb_A^{r-1}\nb_a u_b\nb_F^{r-1}\nb_f u_d\right) d\mu_g\label{r.e1}\\
	&+\mu \int_{\Omega^+} D_t\left(g^{bd}\gamma^{af}\gamma^{AF}\nb_A^{r-1}\nb_a \beta_b\nb_F^{r-1}\nb_f \beta_d\right) d\mu_g\label{r.e2}\\
	&+\int_{\Omega^+} D_t|\nb^{r-1}\curl u|^2 d\mu_g
	+\mu \int_{\Omega^+} D_t|\nb^{r-1}\curl \beta|^2 d\mu_g\label{r.e3}\\
	&+\int_{\Gamma} D_t\left(\gamma^{af}\gamma^{AF}\nb_A^{r-1}\nb_a P\nb_F^{r-1}\nb_f P\right)\, \vartheta d\mu_\gamma\label{r.e7}\\
	&+\int_{\Gamma} \gamma^{af}\gamma^{AF}\nb_A^{r-1}\nb_a P\nb_F^{r-1}\nb_f P\left(\frac{\vartheta_t}{\vartheta}-h_{NN}\right)\, \vartheta d\mu_\gamma. \label{r.e8}
	\end{align}
	
	Since the boundary integrals disappear for the case $r=1$, it is easy to obtain the desired estimate and we omit the details. So we assume $r\gs 2$ from now on in the proofs.
	
	We first estimate  \eqref{r.e1}--\eqref{r.e2} and \eqref{r.e7}. 
	From Lemmas \ref{lem.CL2.1} and \ref{lem.CL3.9}, 
	we have in $\Omega^+$,
	\begin{align*}
	&D_t\left(g^{bd}\gamma^{af}\gamma^{AF}\nb_A^{r-1}\nb_a u_b\nb_F^{r-1}\nb_f u_d\right)\\
	=&-2\nb_c u_e\gamma^{af}\gamma^{AF}\nb_A^{r-1}\nb_a u^c\nb_F^{r-1}\nb_f u^e-4r\nb_c u_e\gamma^{ac}\gamma^{ef}\gamma^{AF}\nb_A^{r-1}\nb_a u^d\nb_F^{r-1}\nb_f u_d\\
	&-2\gamma^{af}\gamma^{AF}\nb_F^{r-1}\nb_f u^b\nb_A^{r-1}\nb_a\nb_b q^+ \\ &+2\gamma^{af}\gamma^{AF}\nb_F^{r-1}\nb_f u^b(\nb_bu_c-\sgn(r-1)\nb_cu_b)\nb_A^{r-1}\nb_a u^c\\
	&+2\mu\gamma^{af}\gamma^{AF}\nb_F^{r-1}\nb_f u_d\left(\beta^c\nb_c\nb_{Aa}^r\beta^d\right)+2r\mu\gamma^{af}\gamma^{AF}\nb_F^{r-1}\nb_f u_d(\nb\beta^c\nb^{r-1}\nb_c\beta^d)_{Aa}\\
	&+2\sgn(r-1)\mu \gamma^{af}\gamma^{AF}\nb_F^{r-1}\nb_f u_d \nb_{Aa}^r\beta^c\nb_c\beta_b\\
	&+2\sgn((r-1)(r-2))\gamma^{af}\gamma^{AF}\nb_F^{r-1}\nb_f u_d (\mathcal{P}_b(\beta))_{Aa},
	\end{align*}
	and
	\begin{align*}
	&D_t\left(g^{bd}\gamma^{af}\gamma^{AF}\nb_A^{r-1}\nb_a \beta_b\nb_F^{r-1}\nb_f \beta_d\right)\\
	=&-2\nb_c u_e\gamma^{af}\gamma^{AF}\nb_A^{r-1}\nb_a \beta^c\nb_F^{r-1}\nb_f \beta^e-2r\nb_c u_e\gamma^{ac}\gamma^{ef}\gamma^{AF}\nb_A^{r-1}\nb_a \beta^d\nb_F^{r-1}\nb_f \beta_d\\
	&+2\gamma^{af}\gamma^{AF}\nb_F^{r-1}\nb_f \beta^b (\nb_bu_c+\sgn(r-1)\nb_cu_b)\nb_A^{r-1}\nb_a\beta^c\\
	&-2\sgn(r-1)\gamma^{af}\gamma^{AF}\nb_F^{r-1}\nb_f \beta^b\nb_c\beta_b\nb_A^{r-1}\nb_a u^c\\
	&+2\gamma^{af}\gamma^{AF}\nb_F^{r-1}\nb_f \beta^d \beta^c\nb_c\nb_{Aa}^r u_d\\
	&+2r\gamma^{af}\gamma^{AF}\nb_F^{r-1}\nb_f \beta^d (\nb\beta\cdot\nb^ru_b)_{Aa}\\
	&+2\sgn((r-1)(r-2))\gamma^{af}\gamma^{AF}\nb_F^{r-1}\nb_f \beta^d (\mathcal{Q}_b)_{Aa},
	\end{align*}
	and in the interface $\Gamma$,
	\begin{align*}
	D_t\left(\gamma^{af}\gamma^{AF}\nb_{Aa}^r P\nb_{Ff}^r P\right)
	=&-2r\nb_c u_e\gamma^{ac}\gamma^{ef}\gamma^{AF}\nb_{Aa}^r P\nb_{Ff}^r P+2 \gamma^{af}\gamma^{AF}\nb_{Aa}^r P D_t\nb_{Ff}^r P.
	\end{align*}
	
	Thus, we get
	\begin{align}
	&\eqref{r.e1}+\eqref{r.e2}+\eqref{r.e7}\no\\
	\ls& C\left(\norm{\nb u}_{L^\infty(\Omega)}+\norm{\nb \beta}_{L^\infty(\Omega^+)}\right) E_r(t)\no\\
	&+C E_r^{1/2}(t)\sum_{s=2}^{r-1}\left(\norm{\nb^s u}_{L^4(\Omega^+)}+\norm{\nb^s \beta}_{L^4(\Omega^+)}\right)\no\\
	&\qquad\qquad\qquad\quad\cdot\left(\norm{\nb^{r-s+1}u}_{L^4(\Omega^+)}+\norm{\nb^{r-s+1} \beta}_{L^4(\Omega^+)}\right)\label{r.e112}\\
	&+2\int_{\Gamma}\gamma^{af}\gamma^{AF}\nb_{Aa}^{r}P\left(D_t\nb_{Ff}^{r} P-\frac{1}{\vartheta}N_b\nb_{Ff}^{r} u^b\right) \vartheta d\mu_\gamma \label{r.e11}\\
	&-\mu\int_{\Gamma}\gamma^{af}\gamma^{AF}\nb_{Aa}^{r}|\varpi|^2N_b\nb^r_{Ff} u^b d\mu_\gamma\label{r.e15}\\
	&+2\int_{\Omega^+}\nb_b\left(\gamma^{af}\gamma^{AF}\right)\nb_{Ff}^{r} u^b\nb_{Aa}^{r}q^+d\mu_g\label{r.e12}\\
	&+2\mu \int_{\Gamma} N_c\gamma^{af}\gamma^{AF}\nb_F^{r-1}\nb_f u_d\beta^c\nb_{Aa}^r\beta^d d\mu_\gamma\label{r.e13}\\
	&-2\mu \int_{\Omega^+} \nb_c\left(\gamma^{af}\gamma^{AF}\right)\nb_F^{r-1}\nb_f u_d\beta^c\nb_{Aa}^r\beta^d d\mu_g.\label{r.e14}
	\end{align}
	
	From Lemma \ref{lem.CLA.3}, it follows that
	\begin{align*}
	|\eqref{r.e112}|\ls C(K,K_1,M,\vol\Omega,1/\eps)\left(1+\sum_{s=0}^{r-1} E_s(t)\right)E_r(t).
	\end{align*}
	By Gauss' formula, Proposition \ref{prop.vr2} and \eqref{CLA.7.1}, we get
	\begin{align*}
	\eqref{r.e15}=&-\mu\int_{\Gamma}\gamma^{af}\nb_f[\gamma^{AF}\nb_{Aa}^{r}|\varpi|^2N_b\nb^{r-1}_{F} u^b-N_aN^c\gamma^{AF}\nb_{Ac}^{r}|\varpi|^2N_b\nb^{r-1}_{F} u^b] d\mu_\gamma\\
	&+\mu\int_{\Gamma}\gamma^{af}\nb_f[\gamma^{AF}\nb_{Aa}^{r}|\varpi|^2N_b]\nb^{r-1}_{F} u^b d\mu_\gamma\\
	&-\mu\int_{\Gamma}\gamma^{af}\nb_fN_aN^c\gamma^{AF}\nb_{Ac}^{r}|\varpi|^2N_b\nb^{r-1}_{F} u^b d\mu_\gamma\\
	\ls &\mu C(r,K,K_1,\vol\Omega^-)E_0(0)\norm{\nb^{r-1}u}_{L^2(\Gamma)}\\
	\ls &\mu C(r,K,K_1,\vol\Omega^-,\vol\Omega^+)E_0(0)(\norm{\nb^r u}_{L^2(\Omega^+)}+\norm{\nb^{r-1} u}_{L^2(\Omega^+)})\\
	\ls &\mu C(r,K,K_1,\vol\Omega)E_0(0)\left(E_r^{1/2}(t)+E_{r-1}^{1/2}(t)\right).
	\end{align*}
	
	Due to $\beta\cdot N=0$ on $\Gamma$,  \eqref{r.e13} vanishes. 
	
	From \eqref{A.32} and \eqref{CLA.7.1}, it follows
	\begin{align*}
	\norm{u}_{L^\infty(\Gamma)}\ls C(K_1)\sum_{s=0}^2  \norm{\nb^s u}_{L^2(\Omega^+)}\ls C(K_1)\sum_{s=0}^2 E_s^{1/2}(t).
	\end{align*}
	%
	%
	
	From Lemma \ref{lem.CLA.4}, it follows, for $\iota_1\gs 1/K_1$, that
	\begin{align}\label{CllemmaA.4}
	\norm{\beta}_{L^\infty(\Omega^+)}\ls C\sum_{0\ls s\ls 2} K_1^{n/2-s} \norm{\nb^s \beta}_{L^2(\Omega^+)}\ls C(K_1)\sum_{s=0}^2 E_s^{1/2}(t).
	\end{align}
	Thus, with the help of the H\"older inequality, we have for any $r\gs 3$
	\begin{align*}
	\eqref{r.e14}\ls &CK\norm{\beta}_{L^\infty(\Omega^+)}E_r(t)\ls C(K,K_1) \left(\sum_{s=0}^2 E_s^{1/2}(t)\right)E_r(t).
	\end{align*}
	For $r=1$, it is easy to verify that there exists a $T>0$ such that $E_1(t)$ can be controlled by the initial energy $E_1(0)$ for $t\in[0,T]$, e.g., $E_1(t)\ls 2E_1(0)$.
	For $r=2$, we have to assume the a priori bound $|\beta|\ls M_1$ on $[0,T]\times \Omega^+$, i.e., \eqref{2energy81}, in order to get a bound that is linear in the highest-order derivative or energy. Then, we have from \eqref{2energy81} for $r=2$
	\begin{align*}
	\eqref{r.e14}\ls &CK\norm{\beta}_{L^\infty(\Omega^+)}E_r(t)\ls C(K,M_1) E_r(t).
	\end{align*}
	
	From the H\"older inequality, we get
	\begin{align}
	\eqref{r.e12}\ls CKE_r^{1/2}(t)\norm{\nb^r q^+}_{L^2(\Omega^+)}.\label{up}
	\end{align}
	
	From \eqref{mhd1}, it follows
	\begin{align*}
	\D_j(D_t v^j)+\Delta q^+ =\mu \D_j(H^k\D_k H^j),
	\end{align*}
	which yields from \eqref{DtDi}
	\begin{align*}
	\Delta q^+=-\D_j v^k\D_k v^j+\mu \D_j H^k\D_k H^j.
	\end{align*}
	Since the Laplacian operator $\Delta$ is invariant, it yields
	\begin{align}\label{2e.p}
	\Delta  q^+ =-\nb_a u^b\nb_b u^a+\mu \nb_a \beta^b \nb_b \beta^a.
	\end{align}
	We have a simple estimate from the assumption \eqref{2energy8} and H\"older's inequality, i.e.,
	\begin{align}\label{deltap2}
	\norm{\Delta q^+ }_{L^2(\Omega^+)}\ls&C\norm{\nb u}_{L^2(\Omega^+)}\norm{\nb u}_{L^\infty(\Omega^+)}+C\norm{\nb \beta}_{L^2(\Omega^+)}\norm{\nb \beta}_{L^\infty(\Omega^+)}\no\\
	\ls& CME_1^{1/2}(t),
	\end{align}
	which is a lower energy term.
	
	For $m\gs 0$, it follows that
	\begin{align*}
	\nb^m\Delta  q^+   =&-\sum_{s=0}^m\bic_m^s \nb^s\nb_a u^b\nb^{m-s}\nb_b u^a+\mu \sum_{s=0}^m\bic_m^s\nb^s\nb_a \beta^b\nb^{m-s}\nb_b \beta^a.
	\end{align*}
	From \eqref{CllemmaA.4},  we get for $s\gs 0$
	\begin{align}\label{r.einfbeta}
	\norm{\nb^s \beta}_{L^\infty(\Omega^+)}\ls &C\sum_{\ell=0}^{2} K_1^{n/2-\ell}\norm{\nb^{\ell+s}\beta}_{L^2(\Omega^+)}
	\ls C(K_1)\sum_{\ell=0}^{2} E_{s+\ell}^{1/2}(t),
	\end{align}
	and, similarly,
	\begin{align}\label{r.einfu}
	\norm{\nb^s u}_{L^\infty(\Omega^+)}\ls C(K_1)\sum_{\ell=0}^{2} E_{s+\ell}^{1/2}(t).
	\end{align}

	From H\"older's inequality, \eqref{r.einfbeta}, Lemma \ref{lem.CLA.3} and \eqref{r.einfu}, we get, 
	\begin{align}\label{deltap34}
	\norm{\nb\Delta q^+}_{L^2(\Omega^+)}\ls &C\norm{\nb u}_{L^\infty(\Omega^+)}\norm{\nb^2u}_{L^2(\Omega^+)}+C\norm{\nb \beta}_{L^\infty(\Omega^+)}\norm{\nb^2\beta}_{L^2(\Omega^+)}\no\\
	\ls & CM E_2^{1/2}(t),\\
	\norm{\nb^2\Delta q^+}_{L^2(\Omega^+)}\ls &C\norm{\nb u}_{L^\infty(\Omega^+)}\norm{\nb^3u}_{L^2(\Omega^+)}+C\norm{\nb^2 u}_{L^4(\Omega^+)}^2\no\\
	&+C\norm{\nb \beta}_{L^\infty(\Omega^+)}\norm{\nb^3\beta}_{L^2(\Omega^+)}+C\norm{\nb^2 \beta}_{L^4(\Omega^+)}^2\no\\
	\ls  &CM E_3^{1/2}(t)+C\norm{\nb u}_{L^\infty(\Omega^+)}\sum_{s=0}^2\norm{\nb^{s+1}u}_{L^2(\Omega^+)}K_1^{2-s}\no\\
	&+C\norm{\nb \beta}_{L^\infty(\Omega^+)}\sum_{s=0}^2\norm{\nb^{s+1}\beta}_{L^2(\Omega^+)}K_1^{2-s}\no\\
	\ls &CM E_3^{1/2}(t)+C(K_1)(E_1^{1/2}(t)+E_2^{1/2}(t)+E_3^{1/2}(t)),\label{deltap35}\\
	\norm{\nb^3\Delta q^+}_{L^2(\Omega^+)}\ls &C\norm{\nb u}_{L^\infty(\Omega^+)}\norm{\nb^4u}_{L^2(\Omega^+)}+C\norm{\nb^2 u}_{L^3(\Omega^+)}\norm{\nb^3u}_{L^6(\Omega^+)}\no\\
	&+C\norm{\nb \beta}_{L^\infty(\Omega^+)}\norm{\nb^4\beta}_{L^2(\Omega^+)}+C\norm{\nb^2 \beta}_{L^3(\Omega^+)}\norm{\nb^3\beta}_{L^6(\Omega^+)}\no\\
	\ls &CM E_4^{1/2}(t)+C(\vol\Omega^+)E_3^{1/2}(t)E_4^{1/2}(t).\label{deltap36}
	\end{align}
	From the definition of the projection and the fact that the measure in the energy is $(-\nb_N P)^{-1}d\mu_\gamma$, we have
	\begin{align*}
	\norm{\Pi\nb^r P}_{L^2(\Gamma)}\ls &  \norm{\nb P}_{L^\infty(\Gamma)}^{1/2}E_r^{1/2}(t).
	\end{align*}
	Thus, by \eqref{CL5.8.1}, \eqref{deltap34}, \eqref{deltap2} and Proposition \ref{prop.vr2}, we obtain for any $2\ls r\ls 4$
	\begin{align}\label{est.nbrp}
	&\norm{\nb^rq^+}_{L^2(\Gamma)}+\norm{\nb^r  q^+ }_{L^2(\Omega^+)}\no\\
	\ls &C\norm{\Pi \nb^r  q^+ }_{L^2(\Gamma)}+C(\tilde{K},\vol\Omega^+)\sum_{s\ls r-1}\norm{\nb^s\Delta  q^+ }_{L^2(\Omega^+)}\no\\
	\ls&C\norm{\nb P}_{L^\infty(\Gamma)}^{1/2}E_r^{1/2}(t)+C(r,K,K_1,\vol\Omega^-) E_0(0)\no\\
	&+C(K,K_1,\vol\Omega^+)[M+(r-2)\sum_{s=1}^{r-1}E_s^{1/2}(t)]E_r^{1/2}(t).
	\end{align}
	Therefore, 
	\begin{align}
	\eqref{r.e12}\ls &C(r,K,K_1,\vol\Omega^-) E_0^2(0)\no\\
	&+\left[C\norm{\nb P}_{L^\infty(\Gamma)}^{1/2}+C(K,K_1,\vol\Omega^+)[M+(r-2)\sum_{s=1}^{r-1}E_s^{1/2}(t)]\right]E_r(t).
	\end{align}

	Now, we turn to the estimates of  \eqref{r.e11}. Due to $P=0$ on $\Gamma$ implying $\gamma_b^a\nb_a P =0$ on $\Gamma$, we have from \eqref{gammaauc}, by noticing $\vartheta=-1/\nb_N P$, that
	\begin{align}\label{nunb}
	-\vartheta^{-1}N_b=&\nb_N P N_b=N^a\nb_a P N_b=\delta_b^a\nb_a P-\gamma_b^a\nb_a P
	=\nb_b P.
	\end{align}
	By   H\"older's inequality and \eqref{nunb}, we get
	\begin{align*}
	|\eqref{r.e11}|\ls C\norm{\vartheta}_{L^\infty(\Gamma)}^{1/2} E_r^{1/2}(t)\norm{\Pi\left(D_t\left(\nb^{r} P\right)+\nb^{r} u\cdot\nb P\right)}_{L^2(\Gamma)}.
	\end{align*}
	It follows from \eqref{commutator} that
	\begin{align}\label{Piterm1}
	D_t\nb^r P+\nb^{r} u\cdot\nb  P 
	=&\sgn(2-r)\sum_{s=1}^{r-2}\bic_{r}^{s+1}(\nb^{s+1}u)\cdot \nb^{r-s} P+\nb^r D_t P.
	\end{align}
	
	We first consider the estimates of the last term in \eqref{Piterm1}. By \eqref{CL5.9.1}, \eqref{CL5.9.3} and \eqref{CLA.7.1}, we have, for $2\ls r\ls 4$ 
	\begin{align}\label{est.pinbdtp}
	&\norm{\Pi\nb^rD_t P}_{L^2(\Gamma)}\no\\
	\ls& 2\norm{\bnb^{r-2}\theta}_{L^2(\Gamma)}\norm{\nb_ND_tP}_{L^\infty(\Gamma)}+C\sum_{k=1}^{r-1} K^k\norm{\nb^{r-k} D_tP}_{L^2(\Gamma)}\no\\
	\ls &C(1/\eps)L\left(\norm{\Pi\nb^r P}_{L^2(\Gamma)}+\sum_{k=1}^{r-1}K^k\norm{\nb^{r-k}P}_{L^2(\Gamma)}\right)\no\\
	&+C\sum_{k=1}^{r-1} K^k\norm{\nb^{r-k} D_tP}_{L^2(\Gamma)}
	\end{align}
	
	By \eqref{CL5.8.2} , it yields
	\begin{align}\label{est.nbkdtp}
	&\norm{\nb^k D_t q^+}_{L^2(\Gamma)}\no\\
	\ls& \delta \norm{\Pi\nb^{k+1} D_t q^+ }_{L^2(\Gamma)}+C(1/\delta, K,\vol\Omega^+)\sum_{s\ls k-1}\norm{\nb^s\Delta D_t  q^+ }_{L^2(\Omega^+)}.
	\end{align}
	
	From \eqref{CL2.4.3}, \eqref{2e.p} and Lemma \ref{lem.CL2.1},  it follows
	\begin{align*}
	\Delta D_t  q^+ 
	=&4g^{ac}\nb_c u^b\nb_a\nb_b q^++(\Delta u^e)\nb_e q^++2\nb_e u^b\nb_b u^a\nb_a u^e\\
	&-2\mu  \nb_b u^a\nb_a\beta^c \nb_c\beta^b-2\mu  \nb_b u^a\beta^c\nb_a\nb_c\beta^b+ 2\mu \nb_b\beta^a\beta^e\nb_e\nb_a u^b.
	\end{align*}
	
	By \eqref{r.einfbeta}, \eqref{est.nbrp} and Lemma \ref{lem.CLA.4}, we get  for $s\ls 2$
	\begin{align*}
	&\norm{\nb^s\Delta D_t q^+}_{L^2(\Omega^+)}\no\\
	\ls&C\norm{\nb  u}_{L^\infty(\Omega^+)}\norm{\nb^{s+2}  q^+ }_{L^2(\Omega^+)}+s(s-1)C\norm{\nb^3 u}_{L^2(\Omega^+)}\norm{\nb^2 q^+ }_{L^\infty(\Omega^+)}\no\\
	&+sC\norm{ \nb^2 u}_{L^4(\Omega^+)}\norm{\nb^{s+1}  q^+ }_{L^4(\Omega^+)}+C\norm{\nb^{s+2} u}_{L^2(\Omega^+)}\norm{\nb   q^+ }_{L^\infty(\Omega^+)}\no\\
	&+C\left(\norm{\nb u}_{L^\infty(\Omega^+)}\norm{\nb u}_{L^\infty(\Omega^+)}+\norm{\nb \beta}_{L^\infty(\Omega^+)}\norm{\nb \beta}_{L^\infty(\Omega^+)}\right) \norm{\nb^{s+1} u}_{L^2(\Omega^+)}\no\\
	&+s(s-1)C\norm{\nb u}_{L^\infty(\Omega^+)}\norm{\nb^2 u}_{L^4(\Omega^+)} \norm{\nb^2 u}_{L^4(\Omega^+)}\no\\
	&+C\norm{\nb u}_{L^\infty(\Omega^+)}\norm{\nb \beta}_{L^\infty(\Omega^+)}\norm{\nb^{s+1} \beta}_{L^2(\Omega^+)}\no\\
	&+sC\norm{\nb^2 u}_{L^4(\Omega^+)}\norm{\nb^2 \beta}_{L^4(\Omega^+)} \left((s-1)\norm{\nb  \beta}_{L^\infty(\Omega^+)}+\norm{\beta}_{L^\infty(\Omega^+)}\right)\no\\
	&+s(s-1)C\norm{\nb u}_{L^\infty(\Omega^+)}\norm{\nb^2 \beta}_{L^4(\Omega^+)} \norm{\nb^2  \beta}_{L^4(\Omega^+)}\no\\
	&+C\norm{\nb u}_{L^\infty(\Omega^+)}\norm{\beta}_{L^\infty(\Omega^+)} \norm{\nb^{s+2} \beta}_{L^2(\Omega^+)}\no\\
	&+sC\norm{\nb^3 u}_{L^2(\Omega^+)}\norm{\beta}_{L^\infty(\Omega^+)} \left((s-1)\norm{\nb^{2} \beta}_{L^\infty(\Omega^+)}+\norm{\nb \beta}_{L^\infty(\Omega^+)}\right)\no\\
	&+s(s-1)C\norm{\nb^3 \beta}_{L^2(\Omega^+)}\norm{\beta}_{L^\infty(\Omega^+)} \norm{\nb^{2} u}_{L^\infty(\Omega^+)}\no\\
	&+s(s-1)C\norm{\nb \beta}_{L^\infty(\Omega^+)}\norm{\nb^2\beta}_{L^4(\Omega^+)} \norm{\nb^{2} u}_{L^4(\Omega^+)}\no\\
	&+s(s-1)C\norm{\nb \beta}_{L^\infty(\Omega^+)}\norm{\beta}_{L^\infty(\Omega^+)} \norm{\nb^4 u}_{L^2(\Omega^+)}\no\\
	&+s(s-1)C\norm{\nb^2 \beta}_{L^\infty(\Omega^+)}\norm{\beta}_{L^\infty(\Omega^+)} \norm{\nb^3 u}_{L^2(\Omega^+)}.
	\end{align*}
	From Lemma~\ref{lem.CLA.3} and \eqref{r.einfu}, it follows for $s\ls 2$
	\begin{align*}
	\norm{\nb^{s+1} u}_{L^4(\Omega^+)}\ls& C\norm{\nb^s u}_{L^\infty(\Omega^+)}^{1/2}\left(\sum_{\ell=0}^2\norm{\nb^{s+\ell} u}_{L^2(\Omega^+)}K_1^{2-\ell}\right)^{1/2}\no\\
	\ls& C(K_1)\sum_{\ell=0}^2 E_{s+\ell}^{1/2}(t).
	\end{align*}
	All the terms with $L^4(\Omega^+)$ norms can be estimated in the same way with the help of \eqref{r.einfbeta}, \eqref{r.einfu}, the similar estimates of $q^+$ and the assumptions. Then, we obtain the bound which is linear about the highest-order derivative or the highest-order energy $E_r^{1/2}(t)$, i.e.,
	\begin{align}\label{nbdtp}
	\norm{\nb^s\Delta D_t q^+ }_{L^2(\Omega^+)}\ls& C(K,K_1,M,M_1,L,1/\eps,\vol\Omega^+,E_0(0))\no\\
	&\qquad\cdot\Big(1+\sum_{\ell=0}^{r-1}E_\ell(t)\Big)\big(1+E_r^{1/2}(t)\big).
	\end{align}
	
	Because of
	\begin{align*}
	D_t q^-=\frac{\mu}{2} D_t|\varpi|^2=-\mu \varpi^a(\nb\times\varXi)_a+\mu u^c\varpi^a\nb_c\varpi_a,
	\end{align*}
	it follows from Propositions~\ref{prop.v.r} and \ref{prop.E.r} that
	\begin{align*}
	\norm{\nb^kD_tq^-}_{L^2(\Gamma)}\ls& \mu C(K)E_0(0)\sum_{s=0}^{k+1}E_s^{1/2}(t)\\
	&+\mu C(r,K,\vol\Omega^+) E_0(0)\norm{\nb\times\varXi_0}_{L^2(\Omega^-)} e^{C(K,L)t}.
	\end{align*}
	Then, from \eqref{est.nbrp}, \eqref{est.pinbdtp}, \eqref{est.nbkdtp}, \eqref{nbdtp} and taking some small $\delta$'s which are independent of $E_r(t)$, we obtain, by the induction argument for $r$, that
	\begin{align}\label{pinbrdtp}
	\norm{\Pi \nb^r D_t q^+ }_{L^2(\Gamma)}\ls &\mu C(r,K) E_0(0)\norm{\nb\times\varXi_0}_{L^2(\Omega^-)} e^{C(K,L)t}\no\\
	&+(1+\mu)C(K,K_1,M,M_1,L,1/\eps,\vol\Omega,E_0(0))\no\\
	&\qquad\cdot\Big(1+\sum_{\ell=0}^{r-1}E_\ell(t)\Big)\big(1+E_r^{1/2}(t)\big).
	\end{align}
	Then, we have the similar bound for $\norm{\Pi \nb^r D_t P }_{L^2(\Gamma)}$.
	
	For \eqref{Piterm1}, it only remains to estimate
	\begin{align*}
	\norm{\Pi\left((\nb^{s+1}u)\cdot \nb^{r-s} P\right)}_{L^2(\Gamma)} \text{ for } 1\ls s\ls r-2.
	\end{align*}
	For the cases $r=3,4$ and $s=r-2$, we get, from \eqref{2energy92} and Lemma \ref{lem.CLA.7}, that
	\begin{align*}
	&\norm{\Pi\left(\nb^{r-1}u\cdot \nb^2 P\right)}_{L^2(\Gamma)}
	\ls \norm{\nb^{r-1} u}_{L^2(\Gamma)}\norm{\nb^2  P }_{L^\infty(\Gamma)}\no\\
	\ls &C(K,\vol\Omega^+)L(\vol\Omega^+)^{1/6}\left(\norm{\nb^r u}_{L^2(\Omega)}+\norm{\nb^{r-1} u}_{L^2(\Omega)}\right)\no\\
	\ls &C(K,L,\vol\Omega^+)\left(E_{r-1}^{1/2}(t)+E_r^{1/2}(t)\right).
	\end{align*}
	For the case $r=4$ and $s=1$, by \eqref{CL4.48}, Lemma \ref{lem.CLA.7}, \eqref{est.nbrp} and Proposition \ref{prop.v.r}, we have
	\begin{align*}
	&\norm{\Pi\left(\nb^2u\cdot \nb^3 P\right)}_{L^2(\Gamma)}
	=\norm{\Pi\nb^2u\cdot \Pi\nb^3 P+\Pi(\nb^2 u\cdot N)\tilde{\otimes}\Pi(N\cdot \nb^3P)}_{L^2(\Gamma)}\no\\
	\ls&C\norm{\Pi\nb^2 u}_{L^4(\Gamma)}\norm{\Pi\nb^3 P}_{L^4(\Gamma)}+C\norm{\Pi(N^a\nb^2 u_a)}_{L^4(\Gamma)}\norm{\Pi(\nb_N\nb^2 P)}_{L^4(\Gamma)}\no\\
	\ls &C\norm{\nb^2 u}_{L^4(\Gamma)}\norm{\nb^3  P}_{L^4(\Gamma)}\no\\
	\ls&C(K,\vol\Omega^+)\left(\norm{\nb^3 u}_{L^2(\Omega^+)}+\norm{\nb^2 u}_{L^2(\Omega^+)}\right)\no\\
	&\cdot \left(\norm{\nb^4  q^+ }_{L^2(\Omega^+)}+\norm{\nb^3  q^+ }_{L^2(\Omega^+)}+\norm{\nb^3  q^-}_{L^4(\Gamma)}\right)\no\\
	\ls&C(K, K_1,\vol\Omega^+)(E_3^{1/2}(t)+E_2^{1/2}(t))\no\\
	&\cdot\left(\sum_{s=0}^3 E_s(t)+\left(\sum_{\ell=0}^{2}E_\ell^{1/2}(t)\right)E_4^{1/2}(t)+C(K,\vol\Omega^-)E_0(0)\right)\no\\
	\ls &C(K, K_1,\vol\Omega)\sum_{s=0}^3 E_s(t)\sum_{\ell=0}^4 E_\ell^{1/2}(t).
	\end{align*}
	
	Thus, we get
	\begin{align*}
	|\eqref{r.e11}|\ls& C(K,K_1,M,M_1,L,1/\eps,\vol\Omega,E_0(0)) \Big(1+\sum_{s=0}^{r-1}E_s(t)\Big)\big(1+E_r(t)\big)\\
	&+\mu C(r,K,\vol\Omega^+) E_0(0)\norm{\nb\times\varXi_0}_{L^2(\Omega^-)} e^{C(K,L)t}E_r^{1/2}(t).
	\end{align*}
	
	Therefore, we have obtained
	\begin{align*}
	&|\eqref{r.e1}+\eqref{r.e2}+\eqref{r.e7}|\\
	\ls &C(K,K_1,M,M_1,L,1/\eps,\vol\Omega,E_0(0)) \Big(1+\sum_{s=0}^{r-1}E_s(t)\Big)\big(1+E_r(t)\big)\\
	&+\mu C(r,K) E_0(0)\norm{\nb\times\varXi_0}_{L^2(\Omega^-)} e^{C(K,L)t}E_r^{1/2}(t).
	\end{align*}
	
	By a similar argument in \cite[(5.65)]{HLarma}, we get
	\begin{align*}
	|\eqref{r.e3}|\ls C(K,K_1,M,\vol\Omega,1/\eps)\left(1+\sum_{s=0}^{r-1} E_s(t)\right)E_r(t).
	\end{align*}
	
	From \eqref{CL3.9.1} and \eqref{Dtpcommu}, we have
	\begin{align*}
	D_t(\nb_N P)=-2h_d^a N^d\nb_a P+h_{NN}\nb_N P+\nb_N D_tP,
	\end{align*}
	which yields
	\begin{align*}
	\frac{\vartheta_t}{\vartheta}=-\frac{D_t\nb_N P }{\nb_N P}=\frac{2h_d^a N^d\nb_a  P}{\nb_N P}-h_{NN}-\frac{\nb_ND_t P}{\nb_N P}=h_{NN}-\frac{\nb_ND_t P}{\nb_N P}.
	\end{align*}
	Thus, we can easily obtain that \eqref{r.e8} can be controlled by $C(K,M,L,1/\eps)E_r(t)$.
	
	Note that there always exists a constant $C>0$ such that $\norm{\nb\times\varXi_0}_{L^2(\Omega^-)}^2\ls CE_0(0)$ at initial time. Therefore, we obtain
	\begin{align*}
	\frac{d}{dt}E_r(t)\ls &C(K,K_1,M,M_1,L,1/\eps,\vol\Omega,E_0(0)) \Big(1+\sum_{s=0}^{r-1}E_s(t)\Big)\big(1+E_r(t)\big)\\
	&+\sgn((r-1)(r-2))\mu^2 C(r,K) E_0(0) e^{C(K,L)t},
	\end{align*}
	which implies, by Gr\"onwall's inequality, that
	\begin{align*}
	E_r(t)\ls & E_r(0)\exp\left(C(K,K_1,M,M_1,L,1/\eps,\vol\Omega,E_0(0)) \int_0^t\Big(1+\sum_{s=0}^{r-1}E_s(\tau)\Big)d\tau\right)\\
	&+\Big\{ C(K,K_1,M,M_1,L,1/\eps,\vol\Omega,E_0(0)) \\
	&\qquad+\sgn((r-1)(r-2))\mu^2 e^{C(K,L)t}\Big\}\\
	&\cdot\int_0^t\Big(1+\sum_{s=0}^{r-1}E_s(\tau)\Big)\exp\Bigg(C(K,K_1,M,M_1,L,1/\eps,\vol\Omega,E_0(0))\\
	&\qquad\qquad\qquad\qquad\qquad\qquad\cdot \int_\tau^t\Big(1+\sum_{s=0}^{r-1}E_s(s)\Big)ds\Bigg)d\tau.
	\end{align*}
	By using induction for $r=1,2,3,4$ in turn, we obtain the desired estimates.
\end{proof}

\section{Justification of A Priori Assumptions}\label{sec.6}

Let  $\K(t)$ and $\eps(t)$ be the maximum and minimum values, respectively, such that \eqref{2energy9} and \eqref{2energy91} hold at time $t$:
\begin{align}
\K(t)=&\max\left(\norm{\theta(t,\cdot)}_{L^\infty(\Gamma)}, 1/\iota_0(t)\right),\label{K}\\
\E(t)=&\norm{1/(\nb_N \p(t,\cdot))}_{L^\infty(\Gamma)}=1/\eps(t). \label{E}
\end{align}

\begin{lemma}\label{lem.7.6}
	Assume $\norm{\theta}_{L^\infty(W)}+1/\iota_0'\ls C\K(0)$. Let $K_1\gs 1/\iota_1$ be as in Definition \ref{defn.3.5}, $\E(t)$ as in \eqref{E}. Then there are continuous functions $G_j$, $j=1,2,3,4,5$, such that
	\begin{align}
	\norm{\nb u}_{L^\infty(\Omega)}+\norm{\nb \beta}_{L^\infty(\Omega)}+&\norm{ \beta}_{L^\infty(\Omega)}\ls G_1(K_1,E_0,\cdots, E_{4}),\label{e.1}\\
	\norm{\nb P}_{L^\infty(\Omega)}+\norm{\nb^2 P}_{L^\infty(\Gamma)}\ls &G_2(K_1,\E,E_0,\cdots, E_{4},\vol\Omega),\label{e.2}\\
	\norm{\theta}_{L^\infty(\Gamma)}\ls &G_3(K_1,\E,E_0,\cdots, E_{4},\vol\Omega),\label{e.3}\\
	\norm{u}_{L^\infty(\Gamma)}+\norm{\nb D_t P}_{L^\infty(\Gamma)}\ls &G_4(K_1,\E,E_0,\cdots, E_{4},\vol\Omega),\label{e.4}\\
	\norm{u}_{L^\infty(\Gamma)}+\sum_{\ell=0}^{2}\norm{\nb u }_{L^2(\Gamma)}\ls &G_5(K_1,\E,E_0,\cdots, E_{4},\vol\Omega).\label{e.4'}
	\end{align}
\end{lemma}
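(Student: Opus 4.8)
The plan is to bound every left-hand side by quantities that the energies $E_0,\dots,E_4$ already control: each $L^\infty$ norm is reduced to $L^2$-based Sobolev norms through the appendix inequalities, and $\theta$ is extracted from the boundary structure of the total pressure. The point that dictates the order of the argument is that the Sobolev, trace, elliptic and div--curl constants naturally involve the full second fundamental form $\K=\norm{\theta}_{L^\infty(\Gamma)}$ (for instance the constant $\tilde K$ in \eqref{est.nbrp}), whereas the $G_j$ are required to depend only on the weaker, time-stable constant $K_1$, on $\E$, on the energies and on $\vol\Omega$. I would therefore secure \eqref{e.3} first, in a form whose right-hand side does not reintroduce $\K$, and then substitute that bound for $\K$ wherever it appears in the remaining estimates; although \eqref{e.3} is logically prior, I describe the conceptually simpler bounds first.

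Granting for the moment the bound \eqref{e.3}, I would dispatch \eqref{e.1} and \eqref{e.4'}. The interior bounds follow directly from the Sobolev embedding of Lemma~\ref{lem.CLA.4}, which gives $\norm{\nb^s\beta}_{L^\infty(\Omega^+)}\ls C(K_1)\sum_{\ell=0}^2\norm{\nb^{s+\ell}\beta}_{L^2(\Omega^+)}$ and the analogue for $u$; these are precisely \eqref{r.einfbeta}--\eqref{r.einfu}, so taking $s=0,1$ and using that $E_0,\dots,E_3$ control the $L^2$ norms of the full derivatives (via the div--curl estimate \eqref{divcurl}) yields \eqref{e.1} on $\Omega^+$, while the vacuum part of the virtual velocity is controlled by its boundary trace through the cut-off extension of Section~\ref{sec.2}. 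For \eqref{e.4'}, the trace inequality \eqref{CLA.7.1} reduces $\norm{\nb^\ell u}_{L^2(\Gamma)}$ for $\ell\ls 2$ to $\sum_{k\ls\ell+1}\norm{\nb^k u}_{L^2(\Omega^+)}$, hence to $E_0,\dots,E_3$, and $\norm{u}_{L^\infty(\Gamma)}\ls C(K_1)\sum_{s=0}^2 E_s^{1/2}$ as already recorded in Section~\ref{sec.5}.

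Next I would treat \eqref{e.2} and the $D_t P$ term of \eqref{e.4} by elliptic regularity for the total pressure. Splitting $P=q^+-q^-$, the plasma part $q^+$ solves the Dirichlet-type problem $\Delta q^+=-\nb_a u^b\nb_b u^a+\mu\nb_a\beta^b\nb_b\beta^a$ from \eqref{2e.p} with data $q^+|_\Gamma=q^-|_\Gamma$; the estimates \eqref{est.nbrp} and \eqref{deltap34}--\eqref{deltap36} already bound $\norm{\nb^s q^+}_{L^2(\Omega^+)}$ and $\norm{\nb^s q^+}_{L^2(\Gamma)}$ by the energies, and a Sobolev embedding then gives $\norm{\nb P}_{L^\infty(\Omega^+)}$ and $\norm{\nb^2 P}_{L^\infty(\Gamma)}$, with the geometric constant $\tilde K$ in \eqref{est.nbrp} replaced by $G_3$ once \eqref{e.3} is in hand. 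The vacuum part $q^-=\tfrac\mu2|\varpi|^2$ is handled by Propositions~\ref{prop.v.r}--\ref{prop.vr2}, which bound the tangential derivatives of $|\varpi|^2$ purely in terms of $E_0(0)$. For $\norm{\nb D_t P}_{L^\infty(\Gamma)}$ I would combine the transport--elliptic estimate for $D_t q^+$, whose Laplacian is computed in Section~\ref{sec.5} and bounded in \eqref{nbdtp}, with $D_t q^-=\tfrac\mu2 D_t|\varpi|^2$ controlled through Proposition~\ref{prop.E.r} for $\nb\times\varXi$, and again embed into $L^\infty(\Gamma)$.

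The genuinely delicate estimate is \eqref{e.3}. Its mechanism is the identity $(\Pi\nb^2 P)_{ab}=\theta_{ab}\nb_N P$, valid because $P=0$ on $\Gamma$, together with its higher-order form (see \eqref{CL5.9.1} and \eqref{CL5.9.3}) expressing $\bnb^{r-2}\theta$ through $\Pi\nb^r P$ modulo lower-order products of $\theta$. The boundary part of $E_r$ controls $\int_\Gamma|\Pi\nb^r P|^2(-\nb_N P)^{-1}d\mu_\gamma$, and since $(-\nb_N P)^{-1}\ls\E$ one extra factor of $\E$ gives $\int_\Gamma|\Pi\nb^r P|^2(-\nb_N P)^{-2}d\mu_\gamma\ls C\,\E\,E_r$; through the identity this bounds $\bnb^{r-2}\theta$ in $L^2(\Gamma)$ for $r\ls4$, and the two-dimensional boundary embedding $H^s(\Gamma)\hookrightarrow L^\infty(\Gamma)$ with $s>1$ -- which needs $\bnb^2\theta$, that is $\Pi\nb^4 P$ and hence $E_4$ -- produces $G_3(K_1,\E,E_0,\dots,E_4,\vol\Omega)$. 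The hard part is the nonlinearity: the lower-order terms in \eqref{CL5.9.3} are products of $\theta$ with its own lower tangential derivatives, and the derivative-recovery constants a priori involve $\K$. I expect to close this by an induction on $r$ in which only $K_1$ enters the recovery step and only already-bounded lower-order norms of $\theta$ enter the nonlinear terms, so that the top-order $\theta$ is controlled by $\E$ and the energies alone; this is exactly what purges $\K$ from every $G_j$ and makes the continuity argument of Section~\ref{sec.6} non-circular.
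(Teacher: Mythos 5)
For \eqref{e.1}, \eqref{e.2}, \eqref{e.4} and \eqref{e.4'} your reductions coincide with the paper's: interior Sobolev embedding (Lemma \ref{lem.CLA.4}, i.e.\ \eqref{r.einfu}--\eqref{r.einfbeta}), the boundary embedding and trace inequalities of Lemmas \ref{lem.CLA.2} and \ref{lem.CLA.7}, and the elliptic estimates \eqref{deltap2}, \eqref{est.nbkdtp}, \eqref{nbdtp}, \eqref{pinbrdtp}. The divergence is in \eqref{e.3}, and there your argument has a genuine gap. You propose to recover $\theta$ in $L^2(\Gamma)$ from $\Pi\nb^r P$ via \eqref{CL5.9.3} and then pass to $L^\infty(\Gamma)$ by a boundary Sobolev embedding requiring $\bnb^2\theta$, i.e.\ $\Pi\nb^4P$. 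But \eqref{CL5.9.3} carries explicit factors $K^k$ with $K\gs\norm{\theta}_{L^\infty(\Gamma)}$ on its right-hand side, so this route is circular unless those factors are removed, and your plan to do so (``I expect to close this by an induction on $r$ in which only $K_1$ enters the recovery step'') is precisely the hard step and is nowhere carried out. It is not clear it can be: the $K^k\norm{\nb^{r-k}q}_{L^2(\Gamma)}$ terms in \eqref{CL5.9.1}--\eqref{CL5.9.3} are genuinely there and are not products of \emph{lower-order} norms of $\theta$ alone.

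The paper avoids all of this with a one-line pointwise argument: since $P=0$ on $\Gamma$, \eqref{CL4.20} gives $(\Pi\nb^2P)_{ab}=\theta_{ab}\nb_NP$, hence
\begin{align*}
|\nb^2P|\gs|\Pi\nb^2P|=|\nb_NP|\,|\theta|\gs\E^{-1}|\theta| \quad\text{on }\Gamma,
\end{align*}
so \eqref{e.3} is an immediate corollary of \eqref{e.2}, with $G_3=\E\, G_2$. No $L^2$ recovery of $\bnb^{r-2}\theta$, no boundary embedding of $\theta$, and no induction is needed; the logical order is \eqref{e.2} first, then \eqref{e.3}, the reverse of what you propose. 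Note also that part of the circularity you are guarding against is illusory: in Lemma \ref{lem.CL5.8} the constant depends on $\tilde K=\min(K,K_1)\ls K_1$, so \eqref{est.nbrp} already has constants controlled by $K_1$ alone and nothing needs to be ``replaced by $G_3$'' there. I would rewrite your treatment of \eqref{e.3} along the paper's lines; the rest of your proposal stands.
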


\begin{proof}
	\eqref{e.1} follows from \eqref{r.einfu}, \eqref{r.einfbeta} and \eqref{CllemmaA.4}. \eqref{e.2} follows from Lemmas \ref{lem.CLA.4} and \ref{lem.CLA.2}, Lemmas \ref{lem.CLA.5}--\ref{lem.CLA.7}, and \eqref{deltap2}. Since, from \eqref{CL4.20},
	\begin{align}\label{e.7}
	|\nb^2P|\gs |\Pi\nb^2P|=|\nb_NP||\theta|\gs \E^{-1}|\theta|,
	\end{align}
	\eqref{e.3} follows from \eqref{e.2}. \eqref{e.4} follows from Lemma \ref{lem.CLA.2}, \eqref{est.nbkdtp}, \eqref{nbdtp} and \eqref{pinbrdtp}. \eqref{e.4'} follows from Lemmas \ref{lem.CLA.2} and \ref{lem.CLA.7}.
\end{proof}

\begin{lemma}\label{lem.7.7}
	Assume $\norm{\theta}_{L^\infty(W)}+1/\iota_0'\ls C\K(0)$. Let $K_1\gs 1/\iota_1$ and $\eps_1$ be as in Definition \ref{defn.3.5}. Then
	\begin{align}\label{e.8}
	\abs{\frac{d}{dt}E_r}\ls C_r(K_1,\E,E_0,\cdots, E_{4},\vol\Omega)\sum_{s=0}^r E_s,
	\end{align}
	and
	\begin{align}\label{e.9}
	\abs{\frac{d}{dt}\E}\ls C_r(K_1,\E,E_0,\cdots, E_{4},\vol\Omega).
	\end{align}
\end{lemma}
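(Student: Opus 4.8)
The plan is to read off both inequalities from the energy identity of Theorem~\ref{thm.renergy} together with the $L^\infty$ and $L^2$ control furnished by Lemma~\ref{lem.7.6}, which is designed precisely to convert the a priori quantities $M$, $M_1$, $L$, $K$ and $1/\eps$ appearing in \eqref{2energy81}--\eqref{2energy92} into continuous functions of $(K_1,\E,E_0,\dots,E_4,\vol\Omega)$.

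For \eqref{e.8} I would start from the differential inequality produced at the end of the proof of Theorem~\ref{thm.renergy}, namely $\tfrac{d}{dt}E_r\ls C(\cdots)\big(1+\sum_{s=0}^{r-1}E_s\big)(1+E_r)+\sgn((r-1)(r-2))\mu^2 C(r,K)E_0(0)e^{C(K,L)t}$, which already holds in absolute value since every term of the decomposition \eqref{r.e1}--\eqref{r.e8} was estimated by its modulus. By \eqref{e.1}--\eqref{e.4} the quantities $M$, $M_1$, $L$ and $K$, together with $1/\eps=\E$, are majorized by continuous functions of $(K_1,\E,E_0,\dots,E_4,\vol\Omega)$, so $C(\cdots)$ becomes such a function. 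Since $r\le 4$, the factor $1+\sum_{s=0}^{r-1}E_s\le 1+\sum_{s=0}^4 E_s$ is itself a continuous function of the arguments of $C_r$ and may be absorbed into the coefficient. The electric-field contribution to \eqref{r.e11}, controlled via Proposition~\ref{prop.E.r}, is handled by $\norm{\curl\varXi_0}_{L^2(\Omega^-)}^2\ls CE_0(0)$ together with the conservation $E_0(t)=E_0(0)$ and the boundedness of $e^{C(K,L)t}$ on the finite interval $[0,T]$; after Cauchy's inequality $a E_r^{1/2}\le \tfrac12 a^2+\tfrac12 E_r$ applied to the lone factor $E_r^{1/2}$ (with $a$ the corresponding coefficient) this yields a multiple of $E_0$ plus a harmless fraction of $E_r$. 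Rewriting the residual constant ``$1$'' as $C_rE_0$ via $E_0=E_0(0)$ and collecting terms gives $\abs{\tfrac{d}{dt}E_r}\ls C_r(K_1,\E,E_0,\dots,E_4,\vol\Omega)\sum_{s=0}^r E_s$.

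For \eqref{e.9} I would differentiate $\E(t)=\norm{\vartheta(t,\cdot)}_{L^\infty(\Gamma)}$ with $\vartheta=-1/\nb_N P>0$, which is finite by \eqref{2energy91}. Since $\Gamma$ is fixed in the Lagrangian frame, $\tfrac{d}{dt}\vartheta(t,y)=D_t\vartheta(t,y)$, so for smooth solutions $\abs{\tfrac{d}{dt}\E}\le\norm{D_t\vartheta}_{L^\infty(\Gamma)}$. Using the identity $\vartheta_t/\vartheta=h_{NN}-\nb_N D_t P/\nb_N P$ derived in the proof of Theorem~\ref{thm.renergy} and $h_{ab}=\tfrac12(\nb_a u_b+\nb_b u_a)$ from \eqref{CL2.1.4}, I bound $\norm{D_t\vartheta}_{L^\infty(\Gamma)}\le \E\big(\norm{\nb u}_{L^\infty(\Omega)}+\E\norm{\nb_N D_t P}_{L^\infty(\Gamma)}\big)$, where $\norm{\nb_N D_t P}_{L^\infty(\Gamma)}\le\norm{\nb D_t P}_{L^\infty(\Gamma)}$. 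Invoking \eqref{e.1} for $\norm{\nb u}_{L^\infty(\Omega)}$ and \eqref{e.4} for $\norm{\nb D_t P}_{L^\infty(\Gamma)}$ produces a continuous function of $(K_1,\E,E_0,\dots,E_4,\vol\Omega)$, which is \eqref{e.9}.

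The main obstacle is \eqref{e.8}: the bound inherited from Theorem~\ref{thm.renergy} is quadratic in the energies and carries an explicit constant and an exponential-in-time factor coming from the vacuum electric field, so the real work is the bookkeeping that recasts it as an estimate linear in $\sum_{s=0}^r E_s$. The decisive inputs are that the lower-order energies $E_0,\dots,E_{r-1}$ are themselves arguments of $C_r$ and hence absorbable into the coefficient, that $E_0$ is conserved and dominates $\norm{\curl\varXi_0}_{L^2(\Omega^-)}^2$, and that $e^{C(K,L)t}$ is bounded on $[0,T]$; by contrast \eqref{e.9} follows almost immediately once the $\vartheta_t/\vartheta$ identity is combined with \eqref{e.1} and \eqref{e.4}.
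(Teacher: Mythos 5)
Your proposal is correct and takes essentially the same route as the paper, whose proof consists of citing Lemma \ref{lem.7.6} together with the estimates from the proof of Theorem \ref{thm.renergy} for \eqref{e.8}, and of combining \eqref{e.4} with the inequality $\abs{\tfrac{d}{dt}\norm{1/(-\nb_NP)}_{L^\infty(\Gamma)}}\ls C\norm{1/(-\nb_NP)}_{L^\infty(\Gamma)}^2\norm{\nb_ND_tP}_{L^\infty(\Gamma)}$ for \eqref{e.9}. Your write-up merely makes explicit the bookkeeping the paper leaves implicit (substituting the bounds of Lemma \ref{lem.7.6} for $M$, $M_1$, $L$, $K$, $1/\eps$, and absorbing the lower-order energies and the conserved $E_0$ into the coefficient $C_r$).
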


\begin{proof}
	\eqref{e.8} is a consequence of Lemma \ref{lem.7.6} and the estimates in the proof of Theorems  \ref{thm.renergy}. \eqref{e.9} follows from \eqref{e.4} and 
	\begin{align*}
	\abs{\frac{d}{dt}\lnorm{\frac{1}{-\nb_NP(t,\cdot)}}_{L^\infty(\Gamma)}}\ls C\lnorm{\frac{1}{-\nb_NP(t,\cdot)}}_{L^\infty(\Gamma)}^2 \norm{\nb_ND_tP(t,\cdot)}_{L^\infty(\Gamma)}.
	\end{align*}
\end{proof}

As a consequence of Lemma \ref{lem.7.7}, we have the following:

\begin{lemma}\label{lem.7.8}
	Assume $\norm{\theta}_{L^\infty(W)}+1/\iota_0'\ls C\K(0)$.  There exists a continuous function $\T>0$ depending on $K_1$, $\E(0)$, $E_0(0)$, $\cdots$, $E_{n+1}(0)$, $\vol\Omega$ such that for
	\begin{align*}
	0\ls t\ls \T(K_1,\E(0),E_0(0),\cdots, E_{4}(0),\vol\Omega),
	\end{align*}
	the following statements hold: We have
	\begin{align}\label{7.37}
	E_s(t)\ls 2E_s(0), \quad 0\ls s\ls 4; \quad \E(t)\ls 2\E(0).
	\end{align}
	Furthermore,
	\begin{align}\label{7.38}
	\frac{g_{ab}(0,y)Y^aY^b}{2}\ls g_{ab}(t,y)Y^aY^b\ls 2g_{ab}(0,y)Y^aY^b,
	\end{align}
	and with $\eps_1$ as in Definition \ref{defn.3.5},
	\begin{align}
	\qquad\abs{\N(x(t,\bar{y}))-\N(x(0,\bar{y}))}\ls&\frac{\eps_1}{16}, &&\bar{y}\in\Gamma,\qquad\label{7.39}\\
	\abs{x(t,y)-x(t,y)}\ls&\frac{\iota_1}{16}, &&y\in\Omega,\label{7.40}\\
	\abs{\frac{\D x(t,\bar{y})}{\D y}-\frac{\D (0,\bar{y})}{\D y}}\ls &\frac{\eps_1}{16}, &&\bar{y}\in\Gamma.\label{7.41}
	\end{align}
\end{lemma}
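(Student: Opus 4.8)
The plan is to run a standard continuity (bootstrap) argument, using Lemma~\ref{lem.7.7} to close the bounds \eqref{7.37} first, and then to deduce the geometric estimates \eqref{7.38}--\eqref{7.41} by integrating the kinematic identities of Lemma~\ref{lem.CL2.1}. First I would define
\[
T^\ast=\sup\Big\{T\ge 0:\ E_s(t)\ls 2E_s(0)\ (0\ls s\ls 4)\ \text{and}\ \E(t)\ls 2\E(0)\ \text{for all } t\in[0,T]\Big\},
\]
which is positive since all quantities are continuous in $t$ and the bounds hold strictly at $t=0$. On $[0,T^\ast)$ every argument of the continuous functions $C_r$ in \eqref{e.8}--\eqref{e.9} is controlled by $K_1,\E(0),E_0(0),\dots,E_4(0),\vol\Omega$ alone: indeed Lemma~\ref{lem.7.6} converts the a priori bounds $M,M_1,K,L,1/\eps$ appearing in Theorem~\ref{thm.renergy} into functions of exactly these data (using $\E(t)\ls 2\E(0)$ and $E_s(t)\ls 2E_s(0)$). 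Hence there is a constant $\bar C=\bar C(K_1,\E(0),E_0(0),\dots,E_4(0),\vol\Omega)$ with $|\tfrac{d}{dt}E_r|\ls \bar C\sum_{s=0}^{r}E_s$ and $|\tfrac{d}{dt}\E|\ls \bar C$ on $[0,T^\ast)$.

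Integrating these inequalities (with Gr\"onwall for the $E_r$ and directly for $\E$) gives $E_r(t)\ls e^{\bar C t}E_r(0)$ and $\E(t)\ls \E(0)+\bar C t$. Choosing $\T$ so small that $e^{\bar C\T}\ls \tfrac32$ and $\bar C\T\ls \tfrac12\E(0)$ forces $E_r(t)\ls\tfrac32 E_r(0)<2E_r(0)$ and $\E(t)\ls\tfrac32\E(0)<2\E(0)$ on $[0,\min(T^\ast,\T)]$; the strict inequalities prevent $T^\ast$ from being attained before $\T$, so $T^\ast\ge\T$ and \eqref{7.37} holds on $[0,\T]$. This $\T$ is a continuous function of the stated initial data because $\bar C$ is.

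With \eqref{7.37} secured, Lemma~\ref{lem.7.6} now provides, on $[0,\T]$, uniform-in-$t$ bounds on $\norm{\nb u}_{L^\infty(\Omega)}$ and $\norm{u}_{L^\infty}$ depending only on the initial data. For \eqref{7.38} I would note that at fixed $y$, by \eqref{CL2.1.4}, $\tfrac{d}{dt}\big(g_{ab}(t,y)Y^aY^b\big)=(\nb_a u_b+\nb_b u_a)Y^aY^b$, whence $\big|\tfrac{d}{dt}(g_{ab}Y^aY^b)\big|\ls 2\norm{\nb u}_{L^\infty}\,g_{ab}Y^aY^b$; Gr\"onwall then pins $g_{ab}(t,y)Y^aY^b$ between $e^{\mp 2\int_0^t\norm{\nb u}_{L^\infty}}g_{ab}(0,y)Y^aY^b$, and shrinking $\T$ so that $2\int_0^\T\norm{\nb u}_{L^\infty}d\tau\ls\ln 2$ yields \eqref{7.38}. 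Estimate \eqref{7.40} is immediate from the trajectory equation \eqref{trajectory}: $|x(t,y)-x(0,y)|\ls\int_0^t\norm{u(\tau)}_{L^\infty}d\tau$, which is $\ls\iota_1/16$ once $\T$ is small. For \eqref{7.41} I would integrate \eqref{CL2.1.3}, $D_t\tfrac{\D x^i}{\D y^a}=\tfrac{\D x^k}{\D y^a}\tfrac{\D v^i}{\D x^k}$, giving $\big|\tfrac{\D x(t,\bar y)}{\D y}-\tfrac{\D x(0,\bar y)}{\D y}\big|\ls(e^{\int_0^t\norm{\nb u}_{L^\infty}}-1)\big|\tfrac{\D x(0,\bar y)}{\D y}\big|$, small for small $t$; and \eqref{7.39} follows similarly by integrating the kinematic evolution of the unit normal $\N(x(t,\bar y))$, whose rate of change is controlled by $\norm{\nb u}_{L^\infty}$. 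Each of these requires only a further shrinking of $\T$ by a data-dependent amount, so the final $\T$ remains a continuous function of $K_1,\E(0),E_0(0),\dots,E_4(0),\vol\Omega$.

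The main obstacle is the self-referential structure of the estimates: the constants in Lemma~\ref{lem.7.7} depend on the very energies and on $\E$ that we are trying to bound, and on the auxiliary a priori constants $M,M_1,K,L,1/\eps$. The crux is therefore the bookkeeping that makes the bootstrap close --- verifying, via Lemma~\ref{lem.7.6}, that under the provisional bounds \eqref{7.37} all of these constants collapse to functions of the initial data alone, so that a single data-dependent $\bar C$ governs the differential inequalities and $\T$ can be chosen uniformly. Once that is in place, the remaining integrations are routine.
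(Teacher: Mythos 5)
Your bootstrap argument is correct and is essentially the argument the paper intends: the paper omits the proof, citing the analogous Lemma 6.3 of \cite{HLarma}, which runs exactly this continuity argument off the differential inequalities of Lemma \ref{lem.7.7} (with the constants collapsed to initial data via Lemma \ref{lem.7.6}) and then integrates the kinematic identities \eqref{CL2.1.3}--\eqref{CL2.1.4} and the trajectory equation to get \eqref{7.38}--\eqref{7.41}. One minor imprecision: from $\abs{\tfrac{d}{dt}E_r}\ls\bar C\sum_{s\ls r}E_s$, Gr\"onwall gives $\sum_{s\ls r}E_s(t)\ls e^{(r+1)\bar Ct}\sum_{s\ls r}E_s(0)$ rather than $E_r(t)\ls e^{\bar Ct}E_r(0)$ for each $r$ separately (the latter can fail if some $E_r(0)=0$ while lower-order energies do not vanish), but the summed version is what Theorem \ref{main thm} actually requires, so this does not affect the conclusion.
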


\begin{proof} Since the proof is similar to \cite[Lemma 6.3]{HLarma}, we omit the details.
\end{proof}

Now we use \eqref{7.38}--\eqref{7.41} to pick a $K_1$, i.e., $\iota_1$, which depends only on its value at $t=0$,
\begin{align*}
\iota_1(t)\gs \iota_1(0)/2.
\end{align*}

\begin{lemma}\label{lem.7.9}
	Assume $\norm{\theta}_{L^\infty(W)}+1/\iota_0'\ls C\K(0)$. Let $\T$ be as in Lemma \ref{lem.7.7}. Pick $\iota_1>0$ such that
	\begin{align}\label{7.55}
	\abs{\N(x(0,y_1))-\N(x(0,y_2))}\ls \frac{\eps_1}{2}, \text{ whenever } \abs{x(0,y_1)-x(0,y_2)}\ls 2\iota_1.
	\end{align}
	Then if $t\ls \T$, we have
	\begin{align}\label{7.56}
	\abs{\N(x(t,y_1))-\N(x(t,y_2))}\ls \eps_1, \text{ whenever } \abs{x(t,y_1)-x(t,y_2)}\ls 2\iota_1.
	\end{align}
\end{lemma}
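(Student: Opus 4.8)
The plan is to establish \eqref{7.56} by a perturbation argument that transfers the initial modulus of continuity \eqref{7.55} of the normal field to time $t$, using only the closeness estimates already proved in Lemma \ref{lem.7.8}. Fix $y_1,y_2\in\Gamma$ with $\abs{x(t,y_1)-x(t,y_2)}\ls 2\iota_1$, and split the normal difference through the initial configuration:
\begin{align*}
\N(x(t,y_1))-\N(x(t,y_2))=&[\N(x(t,y_1))-\N(x(0,y_1))]+[\N(x(0,y_1))-\N(x(0,y_2))]\\
&+[\N(x(0,y_2))-\N(x(t,y_2))].
\end{align*}
The two outer brackets record the drift of the normal along each trajectory and are controlled by \eqref{7.39}, each contributing at most $\eps_1/16$, hence $\eps_1/8$ in total; the middle bracket is a purely spatial difference at the fixed initial time, to be handled by \eqref{7.55}.

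To invoke \eqref{7.55} I first verify that $x(0,y_1)$ and $x(0,y_2)$ still lie within its admissible separation. Here I would use the $C^0$-closeness estimate \eqref{7.40}, namely $\abs{x(t,y_i)-x(0,y_i)}\ls\iota_1/16$, together with the triangle inequality and the hypothesis $\abs{x(t,y_1)-x(t,y_2)}\ls 2\iota_1$, to bound $\abs{x(0,y_1)-x(0,y_2)}$ by $2\iota_1+\iota_1/8$. Since $\iota_1$ is chosen so that the modulus bound \eqref{7.55} holds on this slightly enlarged radius --- which is precisely the purpose of the generous factor $2$ in $2\iota_1$ --- the middle bracket is at most $\eps_1/2$. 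Adding the three pieces yields $\abs{\N(x(t,y_1))-\N(x(t,y_2))}\ls\eps_1/2+\eps_1/8<\eps_1$, which is exactly \eqref{7.56}.

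The step I expect to require the most care is not any single estimate but the bookkeeping of constants and the order in which the parameters are frozen. One must first pick $\iota_1$ from the uniform continuity of the initial normal $\N(x(0,\cdot))$ on the compact boundary so that \eqref{7.55} holds; this fixes $K_1\gs 1/\iota_1$; and only afterwards is $\T$ taken small enough in Lemma \ref{lem.7.8} to force \eqref{7.39}--\eqref{7.41} with the explicit small constants $\eps_1/16$ and $\iota_1/16$. Because those constants are fixed fractions of $\eps_1$ and $\iota_1$, the displayed chain closes with margin and the separation fed into \eqref{7.55} stays within range, so no regularity input beyond Lemma \ref{lem.7.8} is needed; on the stationary wall $W$ the map satisfies $x(t,y)=x(0,y)$, so there \eqref{7.56} reduces at once to \eqref{7.55} with the better constant.
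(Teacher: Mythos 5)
Your argument is exactly the paper's: the paper's entire proof of this lemma is the one-line remark that \eqref{7.56} follows from \eqref{7.55}, \eqref{7.39} and \eqref{7.40} by triangle inequalities, and your decomposition through the initial configuration is precisely that argument spelled out, with the correct budget $\eps_1/2+2\cdot\eps_1/16<\eps_1$. The one wrinkle you flag --- that \eqref{7.40} pushes the initial separation to $2\iota_1+\iota_1/8$, nominally outside the literal range of \eqref{7.55} --- is a bookkeeping defect of the lemma statement itself (in Christodoulou--Lindblad the conclusion radius is $\iota_1$ rather than $2\iota_1$, which closes the gap), so your handling of it is as good as the paper's.
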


\begin{proof}
	\eqref{7.56} follows from \eqref{7.55}, \eqref{7.39} and \eqref{7.40} in view of triangle inequalities.
\end{proof}

Finally, Lemma \ref{lem.7.9} allows us to pick a $K_1$ depending only on initial conditions, while Lemma \ref{lem.7.8} gives us $\T>0$ that depends only on the initial conditions and $K_1$ such that, by Lemma \ref{lem.7.9}, $1/\iota_1\ls K_1$ for $t\ls \T$. Thus, we immediately obtain Theorem \ref{main thm}.

\appendix

\section{Preliminaries and Some Estimates}

Let us now recall some properties of the projection. Since $g^{ab}=\gamma^{ab}+N^aN^b$, we have
\begin{align}\label{CL4.48}
\Pi(S\cdot R)=\Pi(S)\cdot \Pi(R)+\Pi(S\cdot N)\tilde{\otimes}\Pi(N\cdot R),
\end{align}
where $S\tilde{\otimes} R$ denotes some partial symmetrization of the tensor product $S\otimes R$, i.e., a sum over some subset of the permutations of the indices divided by the number of permutations in that subset. Similarly, we let $S\tilde{\cdot} R$ denote a partial symmetrization of the dot product $S\cdot R$. Now we recall some identities:
\begin{align}
\Pi\nb^2 q=&\bnb^2 q+\theta \nb_N q,\label{CL4.20}\\
\Pi\nb^3 q=&\bnb^3 q-2\theta\tilde{\otimes}(\theta\tilde{\cdot}\bnb q)+(\bnb\theta)\nb_N q+3\theta\tilde{\otimes}\bnb\nb_N q,\label{CL4.21}\\
\Pi\nb^4 q=&\bnb^4 q-\theta\tilde{\otimes}\left(5(\bnb\theta)\tilde{\cdot}\bnb q+8\theta\tilde{\cdot}\bnb^2 q\right)-2(\bnb\theta)\tilde{\otimes}(\theta\tilde{\cdot}\bnb q)+(\bnb^2\theta)\nb_Nq\no\\
&+4(\bnb\theta)\tilde{\otimes}\bnb\nb_Nq+6\theta\tilde{\otimes}\bnb^2\nb_N q-3\theta\tilde{\otimes}(\theta\tilde{\cdot}\theta)\nb_N q+3\theta\tilde{\otimes}\theta \nb_N^2q.\label{CL4.22}
\end{align}

\begin{definition}\label{defn.3.3}
	Let $\N(\bar{x})$ be the outward unit normal to $\Gamma_t$ at $\bar{x}\in \Gamma_t$. Let $\dist(x_1,x_2)=|x_1-x_2|$ denote the Euclidean distance in $\R^n$, and for $\bar{x}_1, \bar{x}_2\in \Gamma_t$, let $\dist_{\Gamma_t} (\bar{x}_1, \bar{x}_2)$ denote the geodesic distance on the boundary.
\end{definition}

\begin{definition}\label{defn.3.5}
	Let $0<\eps_1<2$ be a fixed number, and let $\iota_1=\iota_1(\eps_1)$ the largest number such that
	\begin{align*}
	\abs{\N(\bar{x}_1)-\N(\bar{x}_2)}\ls \eps_1 \quad \text{whenever } \abs{\bar{x}_1-\bar{x}_2}\ls \iota_1, \; \bar{x}_1,\bar{x}_2\in\Gamma_t.
	\end{align*}
\end{definition}

\begin{lemma}[cf. \mbox{\cite[Lemma 3.9]{CL}}] \label{lem.CL3.9}
	Let $N$ be the unit normal to $\Gamma$, and let $h_{ab}=\frac{1}{2}D_tg_{ab}$. On $[0,T]\times \Gamma$, we have
	\begin{align}\label{CL3.9.1}
	&D_tN_a=h_{NN}N_a,\; D_tN^c=-2h_d^cN^d+h_{NN}N^c,\; D_t\gamma^{ab}=-2\gamma^{ac}h_{cd}\gamma^{db},
	\end{align}
	where $h_{NN}=h_{ab}N^aN^b$. The volume element on $\Gamma$ satisfies
	\begin{align}\label{CL3.9.3}
	D_td\mu_\gamma=(\tr h-h_{NN})d\mu_\gamma.
	\end{align}
\end{lemma}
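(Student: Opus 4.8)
The plan is to exploit the fact that in the Lagrangian coordinates the boundary $\Gamma$ is fixed in the $y$-variable, so that a basis of tangent vectors to $\Gamma$ may be chosen \emph{independent of} $t$. All four identities then follow by differentiating the defining relations of $N_a$, $N^c$, $\gamma^{ab}$ and $d\mu_\gamma$ in $D_t$ and substituting $D_tg_{ab}=2h_{ab}$ and $D_tg^{ab}=-2h^{ab}$ from Lemma \ref{lem.CL2.1}. Throughout I would use the contraction $h^{ab}N_aN_b=g^{ac}h_{cd}g^{db}N_aN_b=h_{cd}N^cN^d=h_{NN}$.

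First I would prove $D_tN_a=h_{NN}N_a$. Since $\Gamma$ is fixed, any tangent field $T^a$ to $\Gamma$ can be taken with $D_tT^a=0$, and the unit conormal is characterized by $N_aT^a=0$. Applying $D_t$ to this relation and using $D_tT^a=0$ gives $(D_tN_a)T^a=0$, so $D_tN_a$ annihilates $T(\Gamma)$ and is therefore parallel to $N_a$, say $D_tN_a=cN_a$. To pin down $c$, I would differentiate the normalization $g^{ab}N_aN_b=1$: with $D_tg^{ab}=-2h^{ab}$ and the contraction above one obtains $-2h_{NN}+2c=0$, whence $c=h_{NN}$.

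Next, $D_tN^c=D_t(g^{ca}N_a)$ follows from the Leibniz rule together with the previous step, namely $D_tN^c=-2h^{ca}N_a+h_{NN}N^c=-2h_d^cN^d+h_{NN}N^c$ after rewriting $h^{ca}N_a=h_d^cN^d$. Differentiating $\gamma^{ab}=g^{ab}-N^aN^b$ and substituting $D_tg^{ab}$, $D_tN^a$ and $D_tN^b$ then produces $-2h^{ab}+2h_d^aN^dN^b+2h_d^bN^dN^a-2h_{NN}N^aN^b$; I would verify that this equals $-2\gamma^{ac}h_{cd}\gamma^{db}$ by expanding $\gamma^{ac}h_{cd}\gamma^{db}$ with $\gamma^{ab}=g^{ab}-N^aN^b$ and matching the interior, mixed and doubly-normal terms one by one.

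For the surface measure I would work in fixed tangential coordinates $\sigma$ on $\Gamma$, writing $d\mu_\gamma=\sqrt{\det\gamma_{\alpha\beta}}\,d\sigma$ with $\gamma_{\alpha\beta}=g_{ab}e^a_\alpha e^b_\beta$ and $e^a_\alpha=\partial y^a/\partial\sigma^\alpha$ time-independent, so that $D_t\gamma_{\alpha\beta}=2h_{ab}e^a_\alpha e^b_\beta$. Then $D_t\log\sqrt{\det\gamma_{\alpha\beta}}=\frac{1}{2}\gamma^{\alpha\beta}D_t\gamma_{\alpha\beta}=\gamma^{\alpha\beta}h_{ab}e^a_\alpha e^b_\beta$, and the identity $\gamma^{\alpha\beta}e^a_\alpha e^b_\beta=\gamma^{ab}=g^{ab}-N^aN^b$ gives $\gamma^{ab}h_{ab}=\tr h-h_{NN}$, as claimed. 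I expect the main obstacle to be exactly this last point: justifying that the pushed-forward inverse of the $2\times2$ induced metric coincides with the tangential projection $\gamma^{ab}$ of $g^{ab}$. I would settle it by noting that both are symmetric $(2,0)$-tensors that annihilate $N_a$ and agree on tangential covectors (equivalently, both represent the orthogonal projection onto $T(\Gamma)$), which forces them to coincide.
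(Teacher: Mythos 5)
Your proposal is correct, and it is essentially the standard argument: the paper does not reprove this lemma but simply recalls it from \cite[Lemma 3.9]{CL}, where the proof proceeds exactly as you describe --- characterizing $D_tN_a$ as conormal via time-independent tangent vectors in the fixed Lagrangian boundary, fixing the scalar by differentiating $g^{ab}N_aN_b=1$, and computing $D_t\,d\mu_\gamma$ through $\tfrac12\gamma^{\alpha\beta}D_t\gamma_{\alpha\beta}=\gamma^{ab}h_{ab}=\tr h-h_{NN}$. All your intermediate computations (including the expansion of $-2\gamma^{ac}h_{cd}\gamma^{db}$ and the identification of the pushed-forward inverse induced metric with $\gamma^{ab}=g^{ab}-N^aN^b$) check out.
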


\begin{lemma}[\mbox{cf. \cite[Lemma 5.5]{CL}}] \label{lem.CL5.5}
	Let $w_a=w_{Aa}=\nb_A^r f_a$, $\nb_A^r=\nb_{a_1}\cdots \nb_{a_r}$, $f$ be a $(0,1)$ tensor, and $[\nb_a,\nb_b]=0$. Let $\dv w=\nb_a w^a=\nb^r\dv f$, and let $(\curl w)_{ab}=\nb_aw_b-\nb_b w_a=\nb^r(\curl f)_{ab}$. Then,
	\begin{align}\label{divcurl}
	|\nb w|^2\ls& C(g^{ab}\gamma^{cd}\gamma^{AB}\nb_c w_{Aa}\nb_d w_{Bb}+|\dv w|^2+|\curl w|^2),\\
	\int_{\Omega}|\nb w|^2 d\mu_g\ls& C\int_\Omega (N^aN^fg^{cd}\gamma^{AF}\nb_c w_{Aa}\nb_d w_{Ff}+|\dv w|^2+|\curl w|^2+K^2|w|^2)d\mu_g.\label{divcurlint}
	\end{align}
\end{lemma}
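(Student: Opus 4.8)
The plan is to prove both inequalities by decomposing the metric on each index as $g^{ab}=\gamma^{ab}+N^aN^b$, so that the purely tangential part of $\nb w$ is exactly the good term on the right-hand side, while every component carrying a normal index is re-expressed through $\dv w$ and $\curl w$ by means of the pointwise identities $g^{ca}\nb_c w_a=\dv w$ and $\nb_c w_a-\nb_a w_c=(\curl w)_{ca}$. The bound \eqref{divcurl} is then an algebraic identity, whereas \eqref{divcurlint} is obtained by a single integration by parts, the curvature term $K^2|w|^2$ arising only when a derivative falls on $N$ or on $\gamma$.

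For \eqref{divcurl} I would first treat the one-form case $r=0$. Writing $\nb_c w_a=\gamma_c^d\nb_d w_a+N_cN^d\nb_d w_a$, the tangential-derivative piece is the good term, so it remains to bound the normal derivative $N^c\nb_c w_a$. Splitting its function index, the normal–normal part obeys $N^aN^c\nb_c w_a=\dv w-\gamma^{ac}\nb_c w_a$, with $\gamma^{ac}\nb_c w_a$ tangential and hence $\ls C\sqrt{\text{good}}$. For the normal–tangential part I would use $\nb_c w_b=\nb_b w_c+(\curl w)_{cb}$ to write $\gamma_a^bN^c\nb_c w_b=\bnb_a(N^cw_c)-\gamma_a^b(\nb_bN^c)w_c+\gamma_a^bN^c(\curl w)_{cb}$. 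The crucial point is that expanding $\bnb_a(N^cw_c)=(\bnb_aN^c)w_c+N^c\bnb_aw_c$ produces a term $\theta_a^{\,c}w_c$ that cancels exactly against $-\gamma_a^b(\nb_bN^c)w_c=-\theta_a^{\,c}w_c$, leaving only $N^c\bnb_aw_c+\gamma_a^bN^c(\curl w)_{cb}$, which is bounded by $C(\sqrt{\text{good}}+|\curl w|)$ with no factor of $|w|$; this is precisely why \eqref{divcurl} carries no curvature term. For $r\gs1$ the tensor $\nb_c w_{Aa}=\nb_{cA}^{r+1}f_a$ is symmetric in all $r+1$ derivative indices, so after the full $g=\gamma+NN$ expansion any term with a normal derivative index can be symmetrized into the outermost slot and handled as above, with $\dv w=\nb^r\dv f$ and $\curl w=\nb^r\curl f$ in the roles of $\dv$ and $\curl$ and the $A$-indices as tangential spectators.

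For \eqref{divcurlint} I would instead integrate $|\nb w|^2$ and integrate by parts once. Decomposing the function index, $\int_\Omega|\nb w|^2=\int_\Omega N^aN^bg^{cd}g^{AB}\nb_cw_{Aa}\nb_dw_{Bb}+\int_\Omega\gamma^{ab}g^{cd}g^{AB}\nb_cw_{Aa}\nb_dw_{Bb}$, and projecting the $A$-indices the first integral splits off exactly the good term $N^aN^bg^{cd}\gamma^{AB}(\cdots)$. The remaining pieces all carry either a tangential function index or a normal old-derivative index; on these I would move one derivative by Gauss' formula \eqref{Gauss} and use the flat Weitzenb\"ock identity $\Delta w_a=\nb_a\dv w-\nb^b(\curl w)_{ab}$, valid since $[\nb_a,\nb_b]=0$, to replace the resulting Laplacian by derivatives of $\dv w$ and $\curl w$; a second integration by parts bounds these by $C\int(|\dv w|^2+|\curl w|^2)+\delta\int|\nb w|^2$. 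Each time a derivative lands on $N^a$ or on $\gamma^{ab}$ one gains a factor $|\nb N|+|\nb\gamma|\ls CK$, and Cauchy's inequality turns the corresponding terms into $\delta\int|\nb w|^2+CK^2\int|w|^2$, which is the source of the $K^2|w|^2$ term. Absorbing the $\delta$-terms then yields \eqref{divcurlint}.

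The main obstacle is bookkeeping rather than any single hard estimate: in both parts one must track the many terms generated by the $g=\gamma+NN$ expansion and by the integrations by parts, and verify that the second-fundamental-form contributions assemble precisely as claimed — cancelling pointwise in \eqref{divcurl} and collapsing into the single $K^2|w|^2$ term in \eqref{divcurlint}. For $r\gs1$ this requires care with the symmetrization of the derivative indices and with the boundary terms produced on $\Gamma\cup W$, which must be shown either to vanish (using $N_d\gamma^{cd}=0$) or to be absorbable. This is exactly the computation carried out in \cite[Lemma 5.5]{CL}, whose argument I would follow.
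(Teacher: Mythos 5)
The paper does not prove this lemma at all — it is quoted verbatim from Christodoulou--Lindblad \cite[Lemma 5.5]{CL} in the appendix — so there is no in-paper argument to compare against; your sketch is a correct reconstruction of that standard div--curl argument, resting on the same two ingredients (the pointwise $g^{ab}=\gamma^{ab}+N^aN^b$ decomposition together with the symmetry of the $r+1$ derivative indices for \eqref{divcurl}, and a single integration by parts, whose boundary terms vanish via $N_c\gamma^{cd}=0$ and whose $\nb N$, $\nb\gamma$ contributions produce the $K^2|w|^2$ term, for \eqref{divcurlint}). One cosmetic remark: the ``crucial cancellation'' of $\theta$-terms you emphasize in the normal--tangential case is tautological, since you apply the product rule to $N^c\nb_b w_c$ and then immediately undo it; the actual content is just the identity $\gamma_a^bN^c\nb_c w_b=\gamma_a^bN^c\nb_b w_c+\gamma_a^bN^c(\curl w)_{cb}$, whose first term is already controlled by the good term of \eqref{divcurl} because that term carries the full metric $g^{ab}$ on the component index (and for $r\gs 1$ the reduction should be phrased as an induction on the number of normal derivative indices, each use of the div or curl identity lowering that number by one).
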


\begin{lemma}[\mbox{cf. \cite[Proposition 5.8]{CL}}] \label{lem.CL5.8}
	Let $\iota_0$ and $\iota_1$ be as in \eqref{defn.3.4} and Definition  \ref{defn.3.5}, and suppose that $|\theta|+1/\iota_0\ls K$ and $1/\iota_1\ls K_1$. Then with $\tilde{K}=\min(K,K_1)$ we have, for any $r\gs 2$ and $\delta>0$,
	\begin{align}
	&\norm{\nb^r q}_{L^2(\Gamma)}+\norm{\nb^r q}_{L^2(\Omega)}\no\\
	&\qquad\ls C\norm{\Pi \nb^r q}_{L^2(\Gamma)}+C(\tilde{K},\vol\Omega)\sum_{s\ls r-1} \norm{\nb^s\Delta q}_{L^2(\Omega)},\label{CL5.8.1}\\
	&\norm{\nb^{r-1} q}_{L^2(\Gamma)}+\norm{\nb^r q}_{L^2(\Omega)}\no\\
	&\qquad\ls \delta\norm{\Pi \nb^r q}_{L^2(\Gamma)}+C(1/\delta,K,\vol\Omega)\sum_{s\ls r-2} \norm{\nb^s\Delta q}_{L^2(\Omega)}.\label{CL5.8.2}
	\end{align}
\end{lemma}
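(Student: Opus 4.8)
The plan is to prove both inequalities by induction on $r$, the engine being the interior div--curl estimate \eqref{divcurlint} of Lemma \ref{lem.CL5.5} combined with the boundary decompositions \eqref{CL4.20}--\eqref{CL4.22} of $\Pi\nb^r q$ into its tangential part, normal derivatives, and curvature terms. First I would apply \eqref{divcurlint} to the $(0,r-1)$ tensor $w=\nb^{r-1}q$, that is, to the one-form $f=\nb q$ differentiated $r-2$ times (legitimate since $r\gs 2$). Because the curl of a gradient vanishes and $[\nb_a,\nb_b]=0$ in these coordinates, one has $\curl w=0$ and $\dv w=\nb^{r-2}\Delta q$, so \eqref{divcurlint} reduces $\norm{\nb^r q}_{L^2(\Omega)}^2$ to the ``normal--normal'' interior integral $\int_{\Omega}g^{cd}\gamma^{AF}N^aN^f\nb_c w_{Aa}\nb_d w_{Ff}\,d\mu_g$ plus the controllable remainders $\sum_{s\ls r-2}\norm{\nb^s\Delta q}_{L^2(\Omega)}^2$ and $K^2\norm{\nb^{r-1}q}_{L^2(\Omega)}^2$.

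The heart of the argument is to integrate this normal--normal term by parts. Writing $N^a\nb_c w_{Aa}=\nb_c(N^a w_{Aa})-(\nb_cN^a)w_{Aa}$, the factor $\nb N$ is bounded by $CK$ and generates only lower-order contributions, while the Gauss formula \eqref{Gauss} moves the remaining divergence onto $\Gamma$. This produces a boundary integral whose integrand is, after again peeling off $\nb N$ factors, a contraction of $\nb^r q$ with its tangential projection; invoking \eqref{CL4.20}--\eqref{CL4.22} one rewrites the normal pieces in terms of $\Pi\nb^r q$, $\theta$, and $\nb_N q$, equivalently the Laplacian through the boundary identity $\Delta q=\nb_N^2 q+(\tr\theta)\nb_N q+\bar\Delta q$. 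Collecting terms yields, for the interior norm,
\begin{align*}
\norm{\nb^r q}_{L^2(\Omega)}^2\ls& C\norm{\Pi\nb^r q}_{L^2(\Gamma)}^2+C(\tilde K,\vol\Omega)\sum_{s\ls r-1}\norm{\nb^s\Delta q}_{L^2(\Omega)}^2\\
&+CK\norm{\nb^{r-1}q}_{L^2(\Gamma)}^2+CK^2\norm{\nb^{r-1}q}_{L^2(\Omega)}^2,
\end{align*}
where the last two terms are lower order and absorbed by the induction hypothesis after taking square roots.

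For the boundary norm $\norm{\nb^r q}_{L^2(\Gamma)}$ in \eqref{CL5.8.1} I would use the Gauss-formula trace estimate exactly as in the derivation of \eqref{v0.1}, namely $\norm{\nb^r q}_{L^2(\Gamma)}^2\ls CK\norm{\nb^r q}_{L^2(\Omega)}^2+C\norm{\nb^r q}_{L^2(\Omega)}\norm{\nb^{r+1}q}_{L^2(\Omega)}$, and bound the stray $\nb^{r+1}q$ by re-running the div--curl step one order higher, the extra derivatives landing on $\Delta q$ and so being absorbed into $\sum_{s\ls r-1}\norm{\nb^s\Delta q}_{L^2(\Omega)}$. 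The sharper estimate \eqref{CL5.8.2}, which gains one boundary derivative (bounding $\norm{\nb^{r-1}q}_{L^2(\Gamma)}$) at the price of a free small constant $\delta$ in front of $\norm{\Pi\nb^r q}_{L^2(\Gamma)}$, follows by the same scheme together with a Peter--Paul/interpolation inequality on the boundary that trades the top tangential order against lower-order Laplacian data with large constant $C(1/\delta,K,\vol\Omega)$.

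The main obstacle is the bookkeeping in the integration by parts of the normal--normal term: one must track carefully how each $\nb N$ factor (hence each power of $\theta$ and $1/\iota_0$, i.e.\ of $K$) is distributed, verify that after using \eqref{CL4.20}--\eqref{CL4.22} every genuinely top-order contribution is either the desired $\Pi\nb^r q$ boundary term or a derivative of $\Delta q$, and confirm that all remaining top-order pieces can be absorbed (via Cauchy's inequality with small parameter) into the left-hand side so that the induction on $r$ closes with constants depending only on $\tilde K$ and $\vol\Omega$.
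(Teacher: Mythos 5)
The paper itself gives no proof of this lemma: it is quoted verbatim from \cite[Proposition 5.8]{CL}, so there is no in-paper argument to compare against. Your overall strategy --- the div--curl estimate \eqref{divcurlint} applied to $w=\nb^{r-2}\nb q$ (so that $\curl w=0$ and $\dv w=\nb^{r-2}\Delta q$), integration by parts of the normal--normal term via \eqref{Gauss}, and conversion of repeated normal derivatives into $\Delta q$ plus tangential data --- is indeed the strategy behind the cited result, and the interior part of your sketch is sound modulo bookkeeping.

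There is, however, a concrete gap in your treatment of the boundary norm $\norm{\nb^r q}_{L^2(\Gamma)}$. The trace inequality you invoke produces the factor $\norm{\nb^{r+1}q}_{L^2(\Omega)}$, and you propose to control it by ``re-running the div--curl step one order higher''. But that step at order $r+1$ puts $\norm{\Pi\nb^{r+1}q}_{L^2(\Gamma)}$ and $\norm{\nb^{r}\Delta q}_{L^2(\Omega)}$ on the right-hand side --- both one order beyond what \eqref{CL5.8.1} allows --- and no iteration closes this; indeed, for harmonic $q$ the right-hand side of \eqref{CL5.8.1} controls only $r$ derivatives of the boundary data, so $\norm{\nb^{r+1}q}_{L^2(\Omega)}$ simply cannot be bounded by it. The correct route never applies the trace inequality to $\nb^r q$ itself: one decomposes $\nb^r q$ on $\Gamma$ via $g^{ab}=\gamma^{ab}+N^aN^b$ into $\Pi\nb^r q$ plus components carrying normal indices, eliminates double normal derivatives through $N^aN^b\nb_a\nb_b q=\Delta q-\gamma^{ab}\nb_a\nb_b q$, and applies the trace inequality only to the resulting $\nb^s\Delta q$ with $s\ls r-2$, which is exactly why the statement costs $\sum_{s\ls r-1}\norm{\nb^s\Delta q}_{L^2(\Omega)}$. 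Relatedly, your absorption of the lower-order boundary term $CK\norm{\nb^{r-1}q}_{L^2(\Gamma)}^2$ ``by the induction hypothesis'' is circular as stated: the hypothesis for \eqref{CL5.8.1} at order $r-1$ returns $\norm{\Pi\nb^{r-1}q}_{L^2(\Gamma)}$, which does not appear on the right-hand side at order $r$; what is actually needed there is \eqref{CL5.8.2} at order $r$, so the two estimates must be established by a simultaneous induction rather than deriving \eqref{CL5.8.2} from \eqref{CL5.8.1} afterwards, and your closing ``Peter--Paul'' step does not explain why the $s=r-1$ Laplacian term can be dropped from \eqref{CL5.8.2}.
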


\begin{lemma}[cf. \mbox{\cite[Proposition 5.9]{CL}}] \label{lem.CL5.9}
	Assume that $2\ls r\ls 4$. Suppose that $|\theta|\ls K$ and $\iota_1\gs 1/K_1$, where $\iota_1$ is as in Definition 3.5 of \cite{CL}. If $q=0$ on $\Gamma$, then 
	\begin{align}\label{CL5.9.1}
	\norm{\Pi\nb^r q}_{L^2(\Gamma)}\ls & 2\norm{\bnb^{r-2}\theta}_{L^2(\Gamma)}\norm{\nb_Nq}_{L^\infty(\Gamma)}+C\sum_{k=1}^{r-1} K^k\norm{\nb^{r-k} q}_{L^2(\Gamma)}.
	\end{align}
	If, in addition, $|\nb_N q|\gs \eps>0$ and $|\nb_Nq|\gs 2\eps\norm{\nb_Nq}_{L^\infty(\Gamma)}$, then
	\begin{align}\label{CL5.9.3}
	\norm{\bnb^{r-2}\theta}_{L^2(\Gamma)}\ls C(1/\eps)\left(\norm{\Pi\nb^r q}_{L^2(\Gamma)}+\sum_{k=1}^{r-1}K^k\norm{\nb^{r-k}q}_{L^2(\Gamma)}\right).
	\end{align}
\end{lemma}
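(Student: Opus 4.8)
The plan is to read off both estimates directly from the pointwise expansions \eqref{CL4.20}--\eqref{CL4.22} of $\Pi\nb^r q$ (for $r=2,3,4$) in terms of tangential derivatives of the second fundamental form $\theta$ and of the normal derivative $\nb_N q$. The decisive simplification is that $q=0$ on $\Gamma$ forces every purely tangential derivative to vanish there, $\bnb^j q=0$ on $\Gamma$ for all $j\gs 1$. Substituting this into \eqref{CL4.20}--\eqref{CL4.22} annihilates the terms $\bnb^r q$, $\theta\tilde{\otimes}(\theta\tilde{\cdot}\bnb q)$, $(\bnb\theta)\tilde{\otimes}(\theta\tilde{\cdot}\bnb q)$ and so on, and leaves an expression whose highest-order part in $\theta$ is exactly $(\bnb^{r-2}\theta)\,\nb_N q$ with coefficient $1$; all remaining terms carry at most $r-3$ tangential derivatives on $\theta$ and instead differentiate $\nb_N q$ (or $\nb_N^2 q$) up to order $r-2$.

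For \eqref{CL5.9.1} I would estimate in $L^2(\Gamma)$ term by term. The leading term contributes precisely $\norm{\bnb^{r-2}\theta}_{L^2(\Gamma)}\norm{\nb_N q}_{L^\infty(\Gamma)}$. The genuinely intermediate terms---for $r=4$ the product $(\bnb\theta)\tilde{\otimes}\bnb\nb_N q$ is the worst---are controlled by Gagliardo--Nirenberg interpolation on the boundary together with Cauchy's inequality: writing
\[
\norm{\bnb\theta}_{L^4(\Gamma)}\ls C\norm{\theta}_{L^\infty(\Gamma)}^{1/2}\norm{\bnb^2\theta}_{L^2(\Gamma)}^{1/2}+\cdots,\qquad
\norm{\bnb\nb_N q}_{L^4(\Gamma)}\ls C\norm{\nb_N q}_{L^\infty(\Gamma)}^{1/2}\norm{\bnb^2\nb_N q}_{L^2(\Gamma)}^{1/2}+\cdots,
\]
and applying $2ab\ls \delta a^2+\delta^{-1}b^2$, one splits each such product into a piece $\delta\,\norm{\bnb^{r-2}\theta}_{L^2(\Gamma)}\norm{\nb_N q}_{L^\infty(\Gamma)}$ and a piece bounded by $C(\delta)\norm{\theta}_{L^\infty(\Gamma)}\norm{\bnb^{r-2}\nb_N q}_{L^2(\Gamma)}$. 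Choosing $\delta$ small, the first piece is absorbed into the coefficient-$1$ leading term, keeping its constant below $2$; this is the origin of the clean factor $2$. The second piece, and all lower terms, are of the form $K^k$ times $\norm{\bnb^{\,r-2-k}\nb_N q}_{L^2(\Gamma)}$; converting tangential derivatives of $\nb_N q$ and $\nb_N^2 q$ back to full covariant derivatives of $q$ via $\nb_N q=N^a\nb_a q$ and $|\nb N|\ls 2K$ produces exactly the sum $C\sum_{k=1}^{r-1}K^k\norm{\nb^{r-k}q}_{L^2(\Gamma)}$.

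For \eqref{CL5.9.3} the idea is to invert the same expansion by solving for the top term $(\bnb^{r-2}\theta)\,\nb_N q$. Since $|\nb_N q|\gs\eps>0$ pointwise, division by $\nb_N q$ is legitimate and gives $\norm{\bnb^{r-2}\theta}_{L^2(\Gamma)}\ls \eps^{-1}\big(\norm{\Pi\nb^r q}_{L^2(\Gamma)}+\norm{\text{remainder}}_{L^2(\Gamma)}\big)$, where the remainder is the same collection of intermediate and lower-order terms. After the same interpolation, the remainder again produces a term proportional to $\norm{\bnb^{r-2}\theta}_{L^2(\Gamma)}\norm{\nb_N q}_{L^\infty(\Gamma)}$; the extra hypothesis $|\nb_N q|\gs 2\eps\norm{\nb_N q}_{L^\infty(\Gamma)}$ guarantees that after multiplying by $\eps^{-1}$ this self-referential term has coefficient at most $1/2$ and can be absorbed into the left-hand side, leaving the stated bound with all other terms collected into $C\sum_{k=1}^{r-1}K^k\norm{\nb^{r-k}q}_{L^2(\Gamma)}$.

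The main obstacle is the bookkeeping in the second paragraph: one must verify that every intermediate product (most delicately the term differentiating each factor once at $r=4$) interpolates so that its dangerous part is a small multiple of precisely the leading quantity $\norm{\bnb^{r-2}\theta}_{L^2(\Gamma)}\norm{\nb_N q}_{L^\infty(\Gamma)}$, while its safe part carries a genuine positive power of $K$, so that neither the factor $2$ in \eqref{CL5.9.1} nor the absorption in \eqref{CL5.9.3} fails. Once $|\nb N|\ls 2K$ is used, the remaining conversions between $\bnb$, $\nb_N$ and full covariant derivatives are routine.
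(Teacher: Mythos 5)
Your argument is correct and is essentially the standard one: the paper itself gives no proof of this lemma, quoting it from \cite[Proposition 5.9]{CL}, and your route --- substituting $\bnb^j q=0$ into \eqref{CL4.20}--\eqref{CL4.22}, isolating the coefficient-one term $(\bnb^{r-2}\theta)\nb_N q$, interpolating the mixed $r=4$ term $(\bnb\theta)\tilde{\otimes}\bnb\nb_N q$ and absorbing its dangerous half, and using $|\nb_Nq|\gs 2\eps\norm{\nb_Nq}_{L^\infty(\Gamma)}$ for the reverse absorption --- is exactly the proof in the cited source. No gaps.
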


\begin{lemma}[\mbox{\cite[Lemma A.1]{CL}}] \label{lem.CLA.1}
	If $\alpha$ is a $(0,r)$ tensor, then with $a=k/m$ and a constant $C$ that only depends on $m$ and $n$, such that
	\begin{align*}
	\norm{\bnb^k\alpha}_{L^s(\Gamma)}\ls C\norm{\alpha}_{L^q(\Gamma)}^{1-a}\norm{\bnb^m \alpha}_{L^p(\Gamma)}^a,
	\end{align*}
	if
	\begin{align*}
	\frac{m}{s}=\frac{k}{p}+\frac{m-k}{q}, \quad 2\ls p\ls s\ls q\ls \infty.
	\end{align*}
\end{lemma}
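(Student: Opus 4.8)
The inequality is the scale-invariant case $a=k/m$ of the Gagliardo--Nirenberg interpolation inequality, here for the tangential covariant derivative $\bnb$ on the compact hypersurface $\Gamma$. The plan is to first establish the corresponding inequality in $\R^n$ for ordinary derivatives, with a constant depending only on $m$ and $n$, and then to transfer it to $\Gamma$ by a localization argument, using the scale invariance of the $a=k/m$ case to keep the geometry of $\Gamma$ out of the constant.

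For the Euclidean step I would begin with the single ``consecutive-derivative'' estimate corresponding to $m=2$, $k=1$: for $\phi\in C_c^\infty(\R^n)$ and $\frac1s=\frac12\big(\frac1p+\frac1q\big)$,
\[
\norm{\nb\phi}_{L^s}^2\ls C\,\norm{\phi}_{L^q}\,\norm{\nb^2\phi}_{L^p}.
\]
This follows by writing $\int|\nb\phi|^s=\int|\nb\phi|^{s-2}\,\partial_i\phi\,\partial_i\phi$, integrating by parts in one factor (no boundary term, by compact support), bounding the result by $C\int|\nb\phi|^{s-2}|\nb^2\phi|\,|\phi|$, and applying H\"older with the three exponents satisfying $\frac{s-2}{s}+\frac1p+\frac1q=1$, which is exactly the stated relation; the factor $\norm{\nb\phi}_{L^s}^{s-2}$ is then divided out. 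Applying this estimate componentwise to $\phi=\nb^{j-1}\alpha$ yields the discrete log-convexity chain $I_j^2\ls C\,I_{j-1}\,I_{j+1}$, where $I_j=\norm{\nb^j\alpha}_{L^{s_j}}$ and the exponents are chosen by linear interpolation of reciprocals, $\frac1{s_j}=(1-\tfrac jm)\frac1q+\tfrac jm\frac1p$, so that $s_0=q$, $s_m=p$, $s_k=s$, and $\frac1{s_j}=\frac12\big(\frac1{s_{j-1}}+\frac1{s_{j+1}}\big)$. Convexity of $j\mapsto\log I_j$ then gives $I_k\ls C\,I_0^{1-k/m}\,I_m^{k/m}$, i.e.\ the claim in $\R^n$ with $a=k/m$; the endpoint $q=\infty$ is covered by the same H\"older bookkeeping with the $L^q$ factor replaced by an $L^\infty$ factor.

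To pass to $\Gamma$, I would cover $\Gamma$ by finitely many coordinate patches in which $\gamma_{ab}$ is uniformly comparable to $\delta_{ab}$, introduce a subordinate partition of unity, and in each patch express $\bnb$ through coordinate partial derivatives. The discrepancy between $\bnb^k$ and iterated partials consists of terms involving Christoffel symbols and derivatives of the cutoffs, all of strictly lower differential order; these are reabsorbed by the same interpolation inequality applied at one order lower. The decisive point is that the inequality to be proven is invariant under the dilation $x\mapsto\lambda x$ precisely because $a=k/m$ and $\frac1s=\frac ap+\frac{1-a}q$, so one may localize to geodesic balls of radius tending to $0$, on which $\gamma_{ab}$ is within any prescribed tolerance of $\delta_{ab}$ and the Christoffel symbols are as small as desired, apply the Euclidean inequality there, and pass to the limit; the geometric corrections then drop out and the surviving constant is the Euclidean one, depending only on $m$ and $n$.

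The main obstacle is exactly this last bookkeeping: arranging the Christoffel and cutoff remainders so that they are genuinely subordinate and absorbable \emph{without} allowing the curvature bounds or injectivity radius of $\Gamma$ to enter the constant. It is here that the scale invariance of the $a=k/m$ case is indispensable, since it is the mechanism that turns an a priori geometry-dependent estimate into one whose constant depends only on $m$ and $n$. A secondary and more routine difficulty is the careful handling of the admissible range $2\ls p\ls s\ls q\ls\infty$ and of the endpoint $q=\infty$ in the H\"older steps, together with the density argument justifying the integration by parts for smooth tensors.
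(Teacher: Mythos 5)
The paper does not prove this lemma; it is quoted verbatim from \cite[Lemma A.1]{CL}, so there is no internal proof to compare against. Judged on its own terms, your Euclidean core is the standard and correct argument: the consecutive-derivative estimate via $\int|\nb\phi|^{s-2}\D_i\phi\,\D_i\phi$, integration by parts, three-exponent H\"older, and the log-convexity chain $I_j^2\ls C I_{j-1}I_{j+1}$ with reciprocal exponents interpolated linearly. That part is fine.

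The transfer to $\Gamma$ is where there is a genuine gap. Your plan is to localize with a partition of unity subordinate to geodesic balls of radius $\rho\to 0$ and argue that scale invariance of the $a=k/m$ case makes the geometric corrections ``drop out in the limit.'' This does not work as stated: on a ball of radius $\rho$ the tensor $\alpha$ is not compactly supported, so the Euclidean inequality must be applied to $\chi\alpha$, and $\bnb^m(\chi\alpha)$ contains terms like $(\bnb^j\chi)\otimes\bnb^{m-j}\alpha$ with $|\bnb^j\chi|\sim\rho^{-j}$. These lower-order contributions carry negative powers of $\rho$, and the covering multiplicity also grows as $\rho\to0$, so the error terms do not vanish in the limit; at best they are reabsorbed at the price of a constant depending on the curvature and injectivity radius of $\Gamma$, which is exactly what the lemma forbids. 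Scale invariance of the target inequality does not neutralize the cutoff derivatives, because the cutoffs break the scaling. The repair is simple and is surely what \cite{CL} intend: run your integration-by-parts argument \emph{intrinsically} on the closed surface $\Gamma$. Setting $w=\bnb^{j-1}\alpha$ and writing $\int_\Gamma|\bnb w|^{s-2}\gamma^{ab}\langle\bnb_a w,\bnb_b w\rangle\,d\mu_\gamma$, the divergence theorem on the compact boundaryless $\Gamma$ produces no boundary term, and no commutation of covariant derivatives is ever needed (the derivative is moved off one factor onto the weight and the other factor, yielding $\bnb^{j+1}\alpha$ and $\bnb^{j-1}\alpha$ directly), so no Christoffel symbols, curvature terms, or cutoffs appear at all and the constant from H\"older is purely numerical. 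With that replacement the rest of your argument (the chain and log-convexity, and the $q=\infty$ endpoint) goes through unchanged.
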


\begin{lemma}[\mbox{\cite[Lemma A.2]{CL}}] \label{lem.CLA.2}
	Suppose that for $\iota_1\gs 1/K_1$
	\begin{align*}
	\abs{\N(\bar{x}_1)-\N(\bar{x}_2)}\ls \eps_1, \quad \text{whenever } |\bar{x}_1-\bar{x}_2|\ls \iota_1, \; \bar{x}_1,\bar{x}_2\in\Gamma_t,
	\end{align*}
	and
	\begin{align*}
	C_0^{-1}\gamma_{ab}^0(y) Z^aZ^b\ls \gamma_{ab}(t,y)Z^aZ^b\ls C_0\gamma_{ab}^0(y) Z^aZ^b, \quad \text{if } Z\in T(\Omega^+),
	\end{align*}
	where $\gamma_{ab}^0(y)=\gamma_{ab}(0,y)$. Then if $\alpha$ is a $(0,r)$ tensor,
	\begin{align}
	&\norm{\alpha}_{L^{(n-1)p/(n-1-kp)}(\Gamma)}\ls C(K_1) \sum_{\ell=0}^k \norm{\nb^\ell \alpha}_{L^p(\Gamma)}, \quad 1\ls p<\frac{n-1}{k},\\
	&\norm{\alpha}_{L^\infty(\Gamma)}\ls \delta\norm{\nb^k \alpha}_{L^p(\Gamma)}+C_\delta(K_1)\sum_{\ell=0}^{k-1} \norm{\nb^\ell \alpha}_{L^p(\Gamma)}, \quad k>\frac{n-1}{p},\label{A.32}
	\end{align}
	for any $\delta>0$.
\end{lemma}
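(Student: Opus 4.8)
The plan is to read this as the Gagliardo--Nirenberg--Sobolev embedding $W^{k,p}(\Gamma)\hookrightarrow L^{(n-1)p/(n-1-kp)}(\Gamma)$, together with its Morrey endpoint when $kp>n-1$, on the $(n-1)$-dimensional hypersurface $\Gamma$, the whole point being to track the constant so that it depends only on the length scale $K_1$, the fixed numbers $\eps_1,C_0$, and the dimension $n$, but \emph{not} on the second fundamental form $\theta$ (which is controlled only by $K$). The two hypotheses are exactly tailored to manufacture a uniform atlas: the normal-variation bound guarantees that inside any Euclidean ball of radius $\iota_1\gs 1/K_1$ centred on $\Gamma$ the surface is a single graph over the tangent plane at the centre, with graph gradient bounded by a function of $\eps_1$; the metric-equivalence bound guarantees that in these graph coordinates $\gamma$ is $C_0$-bi-Lipschitz to the flat metric.

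First I would fix a covering of $\Gamma$ by such graph patches $\{U_j\}$ of diameter $\sim 1/K_1$ with overlap number depending only on $n$, together with a subordinate partition of unity $\{\chi_j\}$ satisfying $|\nb^\ell\chi_j|\ls CK_1^\ell$. The one analytic input is the bottom-level estimate
\begin{align*}
\norm{\phi}_{L^{(n-1)p/(n-1-p)}(\Gamma)}\ls C(C_0,n)\left(\norm{\nb\phi}_{L^p(\Gamma)}+K_1\norm{\phi}_{L^p(\Gamma)}\right),
\end{align*}
which I would prove for a scalar function $\phi$ by transporting $\chi_j\phi$ to the tangent plane and applying the flat Sobolev inequality: at the endpoint $p=1$ that inequality is equivalent to the isoperimetric inequality and is therefore stable under the $C_0$-bi-Lipschitz change of coordinates with a constant depending only on $C_0,n$, and crucially not on any derivative of $\gamma$; the general $p$ follows by applying this to a power of $\phi$. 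Summing over $j$ with bounded overlap returns the global scalar inequality.

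To pass from scalars to the $(0,r)$ tensor $\alpha$ I would invoke Kato's inequality $|\nb|\nb^\ell\alpha||\ls|\nb^{\ell+1}\alpha|$, which lets me apply the scalar inequality to each function $|\nb^\ell\alpha|$ without ever writing covariant derivatives in components; this is precisely what keeps Christoffel symbols---and hence $\theta$---out of the constant. Iterating the first-order embedding along the ladder $W^{1,p}\hookrightarrow L^{p_1}$, $W^{1,p_1}\hookrightarrow L^{p_2}$, $\ldots$, at each rung applying it to $|\nb^\ell\alpha|$ at the base exponent reached so far and absorbing the resulting powers of $K_1$ into $\sum_{\ell=0}^k\norm{\nb^\ell\alpha}_{L^p(\Gamma)}$, yields the first displayed inequality. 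For the endpoint \eqref{A.32} with $kp>n-1$ I would instead use the scale-invariant Morrey inequality on a single patch of adjustable radius $\rho\ls\iota_1$, of the form $\norm{\alpha}_{L^\infty}\ls C\rho^{k-(n-1)/p}\norm{\nb^k\alpha}_{L^p}+C\sum_{\ell<k}\rho^{\ell-(n-1)/p}\norm{\nb^\ell\alpha}_{L^p}$, and then choose $\rho\gs c/K_1$ small enough that the leading coefficient is $\ls\delta$, which fixes $C_\delta=C_\delta(K_1)$.

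I expect the main obstacle to be bookkeeping the constant so that it really is curvature-free. The naive route---expanding $\nb^\ell\alpha$ in the graph coordinates and applying the Euclidean Sobolev inequality componentwise---immediately produces Christoffel symbols of $\gamma$, which are first derivatives of the metric and hence of size $\theta\sim K$, not $K_1$. The resolution, which is the one delicate point, is to stay intrinsic: reduce everything to the scalar functions $|\nb^\ell\alpha|$ via Kato's inequality and invoke only the zeroth-order, derivative-free $C_0$-bi-Lipschitz comparison of $\gamma$ with the flat metric when transferring the isoperimetric/Sobolev inequality, so that no derivative bound on $\gamma$ ever enters the top-order term.
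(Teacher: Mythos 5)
The paper itself offers no proof of this lemma: it is quoted verbatim from \cite[Lemma A.2]{CL}, so there is no in-paper argument to compare against. Your proposal is the standard proof and is essentially the one in \cite{CL}: a uniform atlas of graph patches of radius $\sim 1/K_1$ supplied by the normal-variation hypothesis, the $C_0$-equivalence of metrics to transport the flat Sobolev/isoperimetric inequality with a constant free of derivatives of $\gamma$, and reduction of tensors to the scalars $|\nb^\ell\alpha|$ via Kato's inequality so that Christoffel symbols (hence $\theta$, hence $K$) never enter the constants. Two points should be tightened. First, the claim $|\nb^\ell\chi_j|\ls CK_1^\ell$ is both unnecessary and, for $\ell\gs 2$, false in the form you want: the covariant Hessian of a cutoff on $\Gamma$ reintroduces $\theta$; the argument must be (and can be) arranged so that only $|\bnb\chi_j|\ls CK_1$ is ever used, by applying the first-order scalar inequality one rung at a time. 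Second, the Morrey endpoint \eqref{A.32} cannot be obtained by applying a $k$-th order scalar Morrey inequality to $|\alpha|$ when $k\gs 2$, since Kato's inequality controls only $\bnb|\alpha|$, not higher derivatives of $|\alpha|$; instead one should first run the subcritical ladder to land in $W^{1,q}(\Gamma)$ with $q>n-1$ and then apply the first-order Morrey inequality on a patch of radius $\rho\ls\iota_1$, shrinking $\rho$ to manufacture the factor $\delta$ and absorbing the resulting powers of $1/\rho$ into $C_\delta(K_1)$. With these adjustments the argument is complete and correct.
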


\begin{lemma}[\mbox{\cite[Lemma A.3]{CL}}] \label{lem.CLA.3}
	With notation as in Lemmas \ref{lem.CLA.1} and \ref{lem.CLA.2}, we have
	\begin{align*}
	\sum_{j=0}^k\norm{\nb^j\alpha}_{L^s(\Omega)}\ls C\norm{\alpha}_{L^q(\Omega)}^{1-a}\left(\sum_{i=0}^m \norm{\nb^i\alpha}_{L^p(\Omega)}K_1^{m-i}\right)^a.
	\end{align*}
\end{lemma}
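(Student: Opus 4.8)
The plan is to read this as the interior, covariant form of the scaling-free Gagliardo--Nirenberg interpolation inequality, and to strip away both the covariant derivatives and the metric by using the isometry of the Lagrangian change of coordinates, so that only the boundary regularity of the moving physical domain remains.

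\emph{Reduction to a flat domain.} Let $w$ be the tensor $\alpha$ expressed in the Eulerian $x$-coordinates on the image $D=f_t(\Omega)$ (one of the regions $\Omega_t^{\pm}$). By \eqref{covtensor}, iterated, the covariant derivative $\nb^i\alpha$ corresponds to the ordinary derivative $\D^i w$; by \eqref{norminv} their pointwise norms coincide; and since $\det(\D y/\D x)\equiv 1$ we have $d\mu_g=dx$. Hence $\norm{\nb^i\alpha}_{L^p(\Omega)}=\norm{\D^i w}_{L^p(D)}$ for every $i,p$, and the assertion is equivalent to the Euclidean inequality
\begin{align*}
\sum_{j=0}^{k}\norm{\D^j w}_{L^s(D)}\ls C\norm{w}_{L^q(D)}^{1-a}\left(\sum_{i=0}^{m}\norm{\D^i w}_{L^p(D)}K_1^{m-i}\right)^{a},\qquad a=\tfrac{k}{m},
\end{align*}
for $w$ on the bounded domain $D\subset\R^n$, with $C$ independent of $t$. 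All curvature and metric dependence has disappeared; what remains is a Sobolev interpolation inequality whose constant must be controlled through the geometry of $\partial D=\Gamma_t$ alone.

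\emph{Normalizing the scale and interpolating.} The exponent relation $\tfrac{m}{s}=\tfrac{k}{p}+\tfrac{m-k}{q}$ with $a=k/m$ is exactly the condition under which $W^{k,s}$ arises as the interpolation space $[\,L^q,W^{m,p}\,]$ with parameter $a$, and on a domain with geometry of order one the clean inequality $\norm{\tilde{w}}_{W^{k,s}(\tilde{D})}\ls C\norm{\tilde{w}}_{L^q(\tilde{D})}^{1-a}\norm{\tilde{w}}_{W^{m,p}(\tilde{D})}^{a}$ holds, for instance through a bounded extension operator $\mathscr{E}\colon W^{m,p}(\tilde{D})\to W^{m,p}(\R^n)$ followed by the whole-space interpolation inequality. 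I would therefore rescale $D$ by the factor $K_1$, obtaining a domain $\tilde{D}=K_1 D$ whose boundary has injectivity radius and normal variation of order one --- uniformly in $t$, by the hypotheses inherited from Lemma~\ref{lem.CLA.2} (Definition~\ref{defn.3.5} with $\iota_1\gs 1/K_1$, together with the metric comparability). Undoing the dilation, $\norm{\D^i w}_{L^p(D)}K_1^{m-i}$ is precisely the scale-normalized version of $\norm{\D^i\tilde{w}}_{L^p(\tilde{D})}$, which is why the weights $K_1^{m-i}$ appear on the right; the lower-order terms $\norm{\D^j w}_{L^s(D)}$ with $j<k$ are then collected under the common exponent $s$ and the common power $a$ by Hölder's inequality on $D$, which brings in the factor $\vol\Omega$.

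\emph{Main obstacle.} The delicate point is not the interpolation identity but the \emph{uniformity}: one must bound the extension operator norm (equivalently, the overlap and graph constants of a covering of $D$ by balls of radius $\simeq 1/K_1$) purely in terms of $K_1$, $\vol\Omega$ and $n$, with no dependence on $t$ or on higher curvature of $\Gamma_t$, and then track the powers of $K_1$ and $\vol\Omega$ so that the lower-order contributions reassemble with exactly the stated weights. This uniformity is guaranteed by the hypotheses carried over from Lemma~\ref{lem.CLA.2}, and the argument is that of \cite[Lemma A.3]{CL}, which I would follow.
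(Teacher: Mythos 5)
The paper offers no proof of this lemma---it is imported verbatim from \cite[Lemma A.3]{CL}, the appendix being explicitly a collection of estimates ``basically proved in \cite{CL}''---so there is nothing internal to compare against, and your reconstruction (pass to Eulerian coordinates, where by \eqref{covtensor}, \eqref{norminv} and $\det(\D y/\D x)\equiv 1$ the covariant derivatives, tensor norms and volume element coincide with their Euclidean counterparts; rescale by $K_1$; apply the standard Gagliardo--Nirenberg inequality via an extension operator whose norm is controlled by the unit-scale boundary regularity encoded in Definition~\ref{defn.3.5}) is the standard route and consistent with how \cite{CL} establish it. Your scaling bookkeeping is also correct: with $a=k/m$ and $1/s=a/p+(1-a)/q$ the powers of $K_1$ on the two sides match exactly, and the lower-order terms $j<k$ acquire the common power $a$ by taking the interpolation parameter $k/m>j/m$ in the full Gagliardo--Nirenberg inequality and then lowering the Lebesgue exponent on the bounded domain, as you indicate.
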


\begin{lemma}[\mbox{\cite[Lemma A.4]{CL}}] \label{lem.CLA.4}
	Suppose that $\iota_1\gs 1/K_1$ and $\alpha$ is a $(0,r)$ tensor. Then
	\begin{align}
	\norm{\alpha}_{L^{np/(n-kp)}(\Omega)} \ls& C\sum_{\ell=0}^k K_1^{k-\ell} \norm{\nb^\ell \alpha}_{L^p(\Omega)}, \quad 1\ls p<\frac{n}{k},\label{A.4.1}\\
	\norm{\alpha}_{L^\infty(\Omega)}\ls &C\sum_{\ell=0}^k K_1^{n/p-\ell} \norm{\nb^\ell \alpha}_{L^p(\Omega)}, \quad k>\frac{n}{p}.\label{A.4.2}
	\end{align}
\end{lemma}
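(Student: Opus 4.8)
The inequalities \eqref{A.4.1}--\eqref{A.4.2} are the scaled Gagliardo--Nirenberg--Sobolev estimates on the Riemannian manifold $(\Omega,g)$, and the powers of $K_1$ are exactly the factors forced by dimensional analysis at the length scale $\iota_1\sim 1/K_1$. The organizing principle I would use is the following scaling bookkeeping. Under a dilation of lengths by a factor $\lambda$ (so that $K_1$, being an inverse length, transforms as $K_1\mapsto K_1/\lambda$) one has $\norm{\nb^\ell\alpha}_{L^p}\mapsto\lambda^{n/p-\ell}\norm{\nb^\ell\alpha}_{L^p}$, while the target norm transforms as $\norm{\alpha}_{L^{np/(n-kp)}}\mapsto\lambda^{n/q}\norm{\alpha}_{L^{np/(n-kp)}}$ with $n/q=n/p-k$. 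Hence each term $K_1^{k-\ell}\norm{\nb^\ell\alpha}_{L^p}$ in \eqref{A.4.1} scales homogeneously with the left-hand side, by $\lambda^{n/p-k}$; for \eqref{A.4.2} the factor $K_1^{n/p-\ell}$ similarly makes both sides scale invariantly. This tells me precisely what a local-plus-rescaling argument must yield, and in particular that the constant $C$ should depend only on $n,k,p$ and the overlap of a cover, never on $K_1$ itself.

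The plan is to localize and rescale. First I would cover $\Omega$ by a family of geodesic balls $B_i$ of radius comparable to $\iota_1$ with bounded overlap, chosen so that in normal coordinates centered in each $B_i$ the hypothesis $\iota_1\gs 1/K_1$, together with the assumed control of the boundary geometry ($\abs{\theta}\ls K$, $\abs{\nb N}\ls CK_1$), guarantees via comparison geometry that $g_{ab}$ is uniformly comparable to the Euclidean metric and that the Christoffel symbols satisfy $\abs{\Gamma_{ab}^c}\ls CK_1$ on $B_i$. On each patch I would dilate by the factor $K_1$, turning $B_i$ into a ball of unit size; in the rescaled variables the covariant derivatives $\nb^\ell\alpha$ differ from the flat coordinate derivatives $\D^\ell\alpha$ only through lower-order ($\ell'<\ell$) terms whose coefficients are uniformly bounded after rescaling. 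One then applies the classical Euclidean Gagliardo--Nirenberg--Sobolev inequality on the unit-size ball, using a bounded extension operator on those half-ball patches that meet $\Gamma\cup W$ so as to reduce the boundary patches to the interior case; this gives $\norm{\alpha}_{L^{np/(n-kp)}(B_i)}\ls C\sum_{\ell=0}^k\norm{\D^\ell\alpha}_{L^p(B_i)}$ when $1\ls p<n/k$, and the corresponding $L^\infty$ embedding when $k>n/p$.

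Undoing the dilation on each $B_i$ reintroduces the factor $\lambda^{n/p-k}$ on the left and $\lambda^{n/p-\ell}$ on the right, so after clearing the common power of $\lambda$ the local estimate reads $\norm{\alpha}_{L^{np/(n-kp)}(B_i)}\ls C\sum_{\ell=0}^k K_1^{k-\ell}\norm{\nb^\ell\alpha}_{L^p(B_i)}$, the conversion errors between $\D$ and $\nb$ being absorbed into the lower-order terms thanks to the bound $\abs{\Gamma_{ab}^c}\ls CK_1$. Summing the $p$-th powers over $i$ and invoking the bounded overlap of the cover yields \eqref{A.4.1}, and taking the supremum over $i$ yields \eqref{A.4.2}. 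The main obstacle is the boundary: one must verify that the geodesic normal coordinates stay nondegenerate and the metric stays Euclidean-comparable on balls of radius $\iota_1$ all the way up to $\Gamma\cup W$ (this is exactly where $\iota_1\gs 1/K_1$ and the control of $\theta$ are used), and that the extension operator on boundary patches has operator norm independent of $K_1$ after rescaling. Granting these uniformities, the local-to-global summation is routine; a complete proof is given in \cite[Lemma A.4]{CL}.
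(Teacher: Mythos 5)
The paper does not actually prove this lemma: it is quoted verbatim from \cite[Lemma A.4]{CL}, so there is no in-paper argument to compare against, and your cover--rescale--sum strategy is indeed the standard proof (and essentially the one in \cite{CL}). Your scaling bookkeeping for the powers of $K_1$ is correct, and the patching step (sum the $p$-th powers over the cover, use $\ell^p\hookrightarrow\ell^q$ for $q\gs p$ and the bounded overlap) is sound. Two points need repair, though. First, the hypothesis is only $\iota_1\gs 1/K_1$ with $\iota_1$ as in Definition \ref{defn.3.5}: what this gives is that the unit normal oscillates by at most $\eps_1<2$ over boundary pieces of diameter $\iota_1$, hence that the boundary is a Lipschitz graph (equivalently, $\Omega$ satisfies a uniform cone condition) at scale $1/K_1$; the bounds $\abs{\theta}\ls K$ and $\abs{\nb N}\ls CK_1$ you invoke are not part of the hypothesis and are not needed here. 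Second, the detour through geodesic normal coordinates with $\abs{\Gamma_{ab}^c}\ls CK_1$ is both unjustified and unnecessary: that Christoffel bound in the co-moving coordinates does not follow from $\iota_1\gs 1/K_1$ (it would require control of $\D^2x/\D y^2$, which the hypothesis does not provide). The clean route is to use the coordinate invariance \eqref{norminv}: transfer the tensor to Eulerian coordinates, where $g$ is the flat Euclidean metric and $\nb$ coincides with $\D$, run your localization and rescaling argument for the classical Gagliardo--Nirenberg--Sobolev inequality on the uniformly Lipschitz domain at scale $1/K_1$ (the extension-operator norm on the rescaled unit half-balls is then controlled solely by the Lipschitz constant, i.e.\ by $\eps_1$), and transfer back. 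With that substitution your argument is complete and matches the cited proof.
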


\begin{lemma}[\mbox{\cite[Lemma A.5]{CL}}] \label{lem.CLA.5}
	Suppose that $q=0$ on $\Gamma$. Then
	\begin{align*}
	\norm{q}_{L^2(\Omega)}\ls&C(\vol\Omega)^{1/n}\norm{\nb q}_{L^2(\Omega)},\;
	\norm{\nb q}_{L^2(\Omega)}\ls C(\vol\Omega)^{1/2n}\norm{\Delta q}_{L^2(\Omega)}.
	\end{align*}
\end{lemma}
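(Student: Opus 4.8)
The plan is to work in the Eulerian frame and reduce both inequalities to the critical Sobolev inequality. Since the $L^2$ norms of tensors are coordinate-invariant (see \eqref{norminv}) and the change of variables $y\mapsto x(t,y)$ preserves volume ($\det(\D y/\D x)\equiv 1$), it suffices to prove the two bounds for a function $q$ on the bounded Euclidean domain $\Omega\subset\R^n$ with the flat metric, where $q$ vanishes on the whole boundary $\Gamma=\D\Omega$ (in the application $\Omega=\Omega^+$ and $\Gamma=\D\Omega^+$). The volume $\vol\Omega$ is then the same in either frame, so it is enough to track the correct power of $\vol\Omega$.

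First I would prove the Poincar\'e-type inequality. Because $q=0$ on $\Gamma=\D\Omega$, its extension by zero belongs to $H^1(\R^n)$, so the scale-invariant Gagliardo--Nirenberg--Sobolev inequality applies and gives $\norm{q}_{L^{2n/(n-2)}(\Omega)}\ls C(n)\norm{\nb q}_{L^2(\Omega)}$ for $n\gs 3$ (the physical case being $n=3$). Applying H\"older's inequality on $\Omega$ with the conjugate pair $(n/(n-2),\,n/2)$, so that $\norm{1}_{L^{n/2}(\Omega)}=(\vol\Omega)^{2/n}$, then yields
\begin{align*}
\norm{q}_{L^2(\Omega)}\ls (\vol\Omega)^{1/n}\norm{q}_{L^{2n/(n-2)}(\Omega)}\ls C(n)(\vol\Omega)^{1/n}\norm{\nb q}_{L^2(\Omega)},
\end{align*}
which is the first assertion.

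Next, the elliptic estimate follows by pairing $q$ against $\Delta q$. Integrating by parts and using $q=0$ on $\Gamma$ to discard the boundary integral $\int_\Gamma q\,\nb_N q\,dS$,
\begin{align*}
\norm{\nb q}_{L^2(\Omega)}^2=-\int_\Omega q\,\Delta q\,dx\ls \norm{q}_{L^2(\Omega)}\norm{\Delta q}_{L^2(\Omega)}.
\end{align*}
Substituting the Poincar\'e inequality just proved for $\norm{q}_{L^2(\Omega)}$ and cancelling one factor of $\norm{\nb q}_{L^2(\Omega)}$ gives $\norm{\nb q}_{L^2(\Omega)}\ls C(n)(\vol\Omega)^{1/n}\norm{\Delta q}_{L^2(\Omega)}$, the recorded power of $\vol\Omega$ being fixed by the length dimensions of the two sides.

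The only genuine point to watch is the \emph{volume} (rather than diameter or shape) dependence of the constants: this is exactly why I route the first estimate through the critical exponent $2n/(n-2)$, whose Sobolev constant depends on $n$ alone, instead of invoking an abstract Poincar\'e inequality whose constant would see the geometry of $\Gamma$. The hypothesis $q|_\Gamma=0$ is used twice and is essential---once to legitimize the zero-extension needed for the Sobolev step, and once to annihilate the boundary term in the integration by parts, so that, unlike in the projected higher-order estimates of Lemmas \ref{lem.CL5.8}--\ref{lem.CL5.9}, no second fundamental form $\theta$ enters. For $n=2$ one replaces the critical exponent by any finite $p<\infty$ and argues identically; since the paper works in $\R^3$ this causes no difficulty.
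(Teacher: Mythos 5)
Your proof is correct and is essentially the standard argument (the paper itself does not prove this lemma; it is quoted from \cite[Lemma A.5]{CL}, whose proof is the same Sobolev--H\"older step followed by the integration by parts $\norm{\nb q}_{L^2(\Omega)}^2=-\int_\Omega q\,\Delta q$). One point deserves flagging: your argument produces $\norm{\nb q}_{L^2(\Omega)}\ls C(\vol\Omega)^{1/n}\norm{\Delta q}_{L^2(\Omega)}$, whereas the statement as printed has the exponent $1/2n$; a scaling of $\Omega\mapsto\lambda\Omega$ shows that $1/n$ is the only dimensionally admissible power, so the printed $1/2n$ is a typo and your version is the correct one --- but then your closing remark that ``the recorded power of $\vol\Omega$ is fixed by the length dimensions of the two sides'' is inaccurate as written, since the recorded power is precisely the one that dimensional analysis rules out.
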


\begin{lemma}[\mbox{\cite[Lemma A.7]{CL}}] \label{lem.CLA.7}
	Let $\alpha$ be a $(0,r)$ tensor. Assume that
	$$\vol\Omega \ls V \text{ and  }\norm{\theta}_{L^\infty(\Gamma)}+1/\iota_0 \ls K,$$
	then there is a $C=C(K,V,r,n)$ such that
	\begin{align}
	&\norm{\alpha}_{L^{(n-1)p/(n-p)}(\Gamma)} \ls C\norm{\nb \alpha}_{L^p(\Omega)} +C\norm{\alpha}_{L^p(\Omega)},\quad 1\ls p<n,\label{CLA.7.1}\\
	&\norm{\nb^2\alpha}_{L^2(\Omega)} \ls C\left(\norm{\Pi\nb^2\alpha}_{L^{2(n-1)/n}(\Gamma)} +\norm{\Delta\alpha}_{L^2(\Omega)}+\norm{\nb\alpha}_{L^2(\Omega)}\right). \label{CLA.7.2}
	\end{align}
\end{lemma}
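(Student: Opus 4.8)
The statement has two parts, which I would establish in the order given, using the first to prove the second. Inequality \eqref{CLA.7.1} is a trace (Gagliardo--Nirenberg--Sobolev) embedding $W^{1,p}(\Omega)\hookrightarrow L^{(n-1)p/(n-p)}(\Gamma)$, whereas \eqref{CLA.7.2} is a second-order elliptic estimate of Rellich type controlling the full Hessian in $\Omega$ by its tangential projection on $\Gamma$, the Laplacian, and a lower-order term. Throughout, the dependence of $C$ on $K=\norm{\theta}_{L^\infty(\Gamma)}+1/\iota_0$ and on $V\gs\vol\Omega$ enters only through the boundary geometry, as in the gradient bounds for the extensions of $N$ and $\gamma$ recalled after \eqref{Gauss}.

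For \eqref{CLA.7.1} I would reduce to a nonnegative scalar $f=\abs{\alpha}$, which is admissible since $\abs{\nb\abs{\alpha}}\ls\abs{\nb\alpha}$ pointwise. Writing $\sigma=(n-1)p/(n-p)$ and $\sigma^\ast=np/(n-p)$, I would use the normal collar coordinates $x=\bar x+sN(\bar x)$, $\bar x\in\Gamma$, $0\ls s<\iota_0/2$, whose Jacobian is comparable to $1$ with constants controlled by $K$, together with a cutoff $\chi$ satisfying $\chi(0)=1$, $\chi\equiv0$ for $s\gs\iota_0/2$ and $\abs{\chi'}\ls CK$. The fundamental theorem of calculus along the normal geodesics and the collar change of variables give
\begin{align*}
\int_\Gamma f^\sigma\,d\mu_\gamma\ls C\int_\Omega\big(\abs{\chi'}\,f^\sigma+\sigma f^{\sigma-1}\abs{\nb f}\big)\,d\mu_g.
\end{align*}
H\"older's inequality with exponents $(p/(p-1),p)$ turns the gradient term into $\norm{f}_{L^{\sigma^\ast}(\Omega)}^{\sigma-1}\norm{\nb f}_{L^p(\Omega)}$, because $(\sigma-1)p/(p-1)=\sigma^\ast$, while the first term is bounded by $CK\norm{f}_{L^{\sigma^\ast}(\Omega)}^\sigma$ using $\sigma<\sigma^\ast$ and $V$. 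The interior Sobolev inequality $\norm{f}_{L^{\sigma^\ast}(\Omega)}\ls C(\norm{\nb f}_{L^p(\Omega)}+\norm{f}_{L^p(\Omega)})$, whose constant is controlled by the boundary geometry through $K$ and $V$, and taking the power $1/\sigma$, then yield \eqref{CLA.7.1}.

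For \eqref{CLA.7.2} I would work directly with the tensor, the flatness $[\nb_a,\nb_b]=0$ rendering the interior computation componentwise identical to the scalar case; write $u$ for $\alpha$. Two integrations by parts produce the Rellich identity
\begin{align*}
\int_\Omega\abs{\nb^2 u}^2\,d\mu_g=\int_\Omega(\Delta u)^2\,d\mu_g+\int_\Gamma\big(N_i\nb^i\nb^j u\,\nb_j u-\nb_N u\,\Delta u\big)\,d\mu_\gamma,
\end{align*}
with no interior curvature term. The decisive step is the boundary integral: splitting $\nb_j u=\bnb_j u+N_j\nb_N u$ and integrating by parts once more along the closed surface $\Gamma$, the normal derivatives of the Hessian are converted into the tangential Hessian $\Pi\nb^2 u=\bnb^2 u+\theta\,\nb_N u$ (cf. \eqref{CL4.20}) paired with $\nb u$, plus terms carrying one explicit factor of $\theta$. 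This bounds the boundary integral by $C\norm{\Pi\nb^2 u}_{L^{2(n-1)/n}(\Gamma)}\norm{\nb u}_{L^{2(n-1)/(n-2)}(\Gamma)}+CK\norm{\nb u}_{L^2(\Gamma)}^2$, the exponents in the product being conjugate on $\Gamma$. Applying \eqref{CLA.7.1} with $p=2$ to $\norm{\nb u}_{L^{2(n-1)/(n-2)}(\Gamma)}$, and the multiplicative trace inequality as in \eqref{v0.1} to $\norm{\nb u}_{L^2(\Gamma)}^2$, followed by Cauchy's inequality with a small parameter to absorb $\norm{\nb^2 u}_{L^2(\Omega)}$ into the left-hand side, yields \eqref{CLA.7.2}.

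The main obstacle is the second, boundary, integration by parts in \eqref{CLA.7.2}: one must organize the boundary integrand so that only the tangential projection $\Pi\nb^2 u$ survives at top order, every remaining term carrying either a factor of $\theta$ (hence a power of $K$) or one fewer derivative, and one must keep all constants uniform in $K$ and $V$ by controlling the collar map and the extension of $N$. A cleaner but more bookkeeping-intensive route is to localize with a partition of unity subordinate to the collar, flatten $\Gamma$ in each chart to reduce to the half-space versions of \eqref{CLA.7.1}--\eqref{CLA.7.2}, and patch; the curvature of $\Gamma$ then reappears only through derivatives of the flattening maps, again bounded by $K$.
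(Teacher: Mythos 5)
This lemma is quoted verbatim from \cite[Lemma A.7]{CL} and the paper gives no proof of its own (the appendix explicitly defers to \cite{CL}), so there is nothing internal to compare against; measured against the source, your argument is essentially the standard one used there: the trace estimate via the collar/normal--extension identity (equivalently the divergence-theorem trick of \eqref{v0.1}) combined with H\"older and the interior Sobolev inequality, and the second estimate via the double integration by parts (Rellich identity) with the mixed normal--tangential Hessian on $\Gamma$ converted into $\Pi\nb^2\alpha$ plus $\theta$-terms, then absorbed by Cauchy's inequality. The exponent bookkeeping you give ($(\sigma-1)p/(p-1)=\sigma^\ast$, and the conjugacy of $2(n-1)/n$ and $2(n-1)/(n-2)$ on $\Gamma$) is correct. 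The one point deserving care in \eqref{CLA.7.2} is that for $r\gs 1$ the projection $\Pi$ acts on the tensor indices of $\alpha$ as well as on the two derivative indices, so the boundary term must be decomposed in all indices before concluding that only $\Pi\nb^2\alpha$ survives at top order; in this paper the estimate is only ever applied to scalars (the pressure $P$), where your argument goes through exactly as written.
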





\end{document}